\numberwithin{equation}{section}
\theoremstyle{plain}
\newtheorem{theorem}{Theorem}[section]
\newtheorem{lemma}[theorem]{Lemma}
\newtheorem{predl}[theorem]{Proposition}
\newtheorem{corollary}[theorem]{Corollary}
\theoremstyle{definition}
\newtheorem{definition}[theorem]{Definition}
\newtheorem{remark}[theorem]{Remark}
\newtheorem{example}[theorem]{Example}
\newcommand{\quot }{/\!\!/}
\newcommand{\Z}{\mathbb Z}
\renewcommand{\AA}{\mathcal A}
\newcommand{\BB}{\mathcal B}
\newcommand{\CC}{\mathcal C}
\newcommand{\DD}{\mathcal D}
\renewcommand{\SS}{\mathcal S}
\newcommand{\RR}{\mathcal R}
\newcommand{\D}{\mathcal D}
\newcommand{\FF}{\mathcal F}
\newcommand{\LL}{\mathcal L}
\newcommand{\TTT}{\mathcal T}
\newcommand{\TT}{\mathsf T}
\renewcommand{\O}{\mathcal O}
\renewcommand{\k}{\mathsf k}
\newcommand{\Mod}{\mathrm{Mod}}
\newcommand{\mmod}{\mathrm{{-}mod}}
\newcommand{\coh}{\mathrm{coh}}
\newcommand{\Perf}{\mathrm{Perf}}
\newcommand{\Id}{\mathrm{Id}}
\newcommand{\id}{\mathrm{id}}
\newcommand{\pr}{\mathrm{pr}}
\newcommand{\Pretr}{\mathrm{Pre}\text{-}\mathrm{Tr}}
\newcommand{\vect}{\mathrm{vect}}
\newcommand{\ra}{\mathbin{\rightarrow}}
\newcommand{\xra}{\xrightarrow}
\renewcommand{\le}{\leqslant}
\renewcommand{\ge}{\geqslant}
\renewcommand{\~}{\widetilde}
\newcommand{\bul}{\bullet}
\def\a{\alpha}
\def\b{\beta}
\newcommand{\g}{\gamma}
\newcommand{\e}{\varepsilon}
\newcommand{\s}{\sigma}
\renewcommand{\th}{\theta}
\DeclareMathOperator{\Hom}{\textup{Hom}}
 \DeclareMathOperator{\Pic}{\mathrm{Pic}}
 \DeclareMathOperator{\Spec}{\mathrm{Spec}}
\DeclareMathOperator{\har}{\mathrm{char}} \DeclareMathOperator{\Ob}{\mathrm{Ob}}
\begin{document}

\author{Alexey Elagin}
\address{Institute for Information Transmission Problems (Kharkevich Institute), Bolshoy Karetny per. 19, Moscow,  Russia, 127994;}
\address{National Research University Higher School of Economics (HSE), 
AG Laboratory, Vavilova str. 7, Moscow, Russia, 117312.}
\email{alexelagin@rambler.ru} 
\title{On equivariant triangulated categories}
\thanks{The author was partially supported  
by AG Laboratory HSE (RF government grant, ag. 11.G34.31.0023), by the Presidents grant NSh-2998.2014.1, by RFBR grants 15-01-02158 and 15-51-50045 and
by the Dynasty foundation.}
\date{}
\begin{abstract}                                                      
Consider a finite group $G$ acting on a triangulated category $\TTT$. In this paper we investigate triangulated structure on the category $\TTT^G$ of $G$-equivariant objects in $\TTT$. We prove (under some technical conditions) that such structure exists. Supposed that an action on $\TTT$ is induced by a DG-action on some DG-enhancement of $\TTT$, we construct a DG-enhancement of $\TTT^G$. Also, we show that the relation ``to be an equivariant category with respect to a finite abelian group action'' is symmetric on idempotent complete additive categories.
\end{abstract}

\maketitle

\section{Introduction}
Triangulated categories became very popular in algebra, geometry and topology in last decades.
In algebraic geometry, they arise as derived categories of coherent sheaves on algebraic varieties or stacks. It turned out that some geometry of varieties can be understood well through their derived categories and homological algebra of these categories. 
Therefore it is always interesting and important to understand how different geometrical operations, constructions, relations look like on the derived category side.

In this paper we are interested in autoequivalences of derived categories or, more  general, in group actions on triangulated categories. 
For $X$ an algebraic variety, there are ``expected'' autoequivalences of $\D^b(\coh(X))$ which are induced by automorphisms of $X$ or by tensoring into line bundles on $X$.
If $X$ is a smooth Fano or if $K_X$ is ample, essentially that is all: 
Bondal and Orlov  have shown in~\cite{BO} that for smooth irreducible projective variety $X$ with $K_X$ or $-K_X$ ample all autoequivalences of $\D^b(\coh(X))$ are generated by automorphisms of $X$, twists into line bundles on $X$ and translations. On the contrary,  varieties with zero $K_X$ may have many non-trivial autoequivalences of $\D^b(\coh(X))$. 
For example, the autoequivalence group of derived category of abelian varieties was calculated by Orlov in~\cite{Or1}.

Our goal is to study, for an action of a group $G$ on a triangulated category $\TTT$, the ``quotient category'' $\TTT^G$, or the category of $G$-equivariant objects in $\TTT$.

The motivation comes from the concept of a $G$-equivariant sheaf. If $X$ is an algebraic variety and the group $G$ (finite or reductive algebraic) acts freely on $X$, then $G$-equivariant coherent sheaves on $X$ correspond to coherent sheaves on the quotient variety $X/G$. On the categorical level, the category $\coh^G(X)$ of $G$-equivariant coherent sheaves on $X$ is equivalent to the category $\coh(X/G)$. For arbitrary $G$-actions, $G$-equivariant sheaves correspond to sheaves on the quotient stack $X\quot G$
which is different from the quotient variety in general.

Following Deligne~\cite{De}, one can define action of a group on a category  and consider equivariant objects in the category with respect to the action, see Section~\ref{section_group}. For an action of a group $G$ on a category $\CC$, we denote the category of $G$-equivariant objects in~$\CC$ by $\CC^G$. For $\CC=\coh(X)$ and the action on $\CC$ induced by an action of $G$ on a variety $X$, $G$-equivariant objects in $\CC$ are $G$-equivariant sheaves on $X$. 
Another basic example of a group action on $\coh(X)$ comes from twisting into line bundles.
If $G\subset \Pic(X)$ is a finite subgroup in the Picard group of~$X$, then tensor product with bundles from $G$ defines an action of $G$ on $\coh(X)$. In this case, $G$-equivariant objects in $\coh(X)$ correspond to  coherent sheaves on a non-ramified $|G|$-fold cover of $X$ which is given explicitly as the relative spectrum $\Spec_X(\oplus_{\LL\in G}\LL^{-1})$. 

Instead of abelian categories of sheaves, one could consider derived categories and group actions on them. 
What categories would equivariant objects form? For the two examples of actions mentioned above the result is not surprising. We have

\begin{theorem}[first proved in \cite{El2}]
\label{th_descentforequiv_intro}
Suppose  $G$ is a group, $X$ is an algebraic $G$-variety over $\k$ and $\har(\k)$ does not divide $|G|$. Then $\D^b(\coh(X))^G\cong \D^b(\coh^G(X))$.
\end{theorem}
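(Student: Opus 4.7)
The plan is to exhibit an equivalence $\Phi \colon \D^b(\coh^G(X)) \to \D^b(\coh(X))^G$. A bounded complex of $G$-equivariant sheaves, viewed simply as a complex of sheaves together with the $G$-action on each term, is naturally an equivariant object of $\D^b(\coh(X))$ in the sense of Deligne; this yields a canonical functor $\Phi$, and what remains is to prove that it is fully faithful and essentially surjective.

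For full faithfulness, I would reduce to the identification
\[
\Ext^i_{\coh^G(X)}(F, H) \;\cong\; \Ext^i_{\coh(X)}(F, H)^G
\]
for $F, H \in \coh^G(X)$, because $\Hom$-spaces in $\D^b(\coh(X))^G$ are computed as $G$-invariants of $\Hom$-spaces in $\D^b(\coh(X))$ by the general formalism of equivariant categories. This identification follows from the hypothesis that $\har(\k) \nmid |G|$: the averaging idempotent $\frac{1}{|G|}\sum_{g\in G} g$ is well-defined, so the $\k$-linear category of $\k[G]$-modules is semisimple, $(-)^G$ is exact on $\k[G]$-modules, and the forgetful functor $\coh^G(X)\to\coh(X)$ has an exact left adjoint (averaging). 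Consequently one can compute equivariant $\Ext$ groups by applying $(-)^G$ termwise to an ordinary injective resolution, yielding the formula above.

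For essential surjectivity I would induct on the cohomological amplitude of an object $E \in \D^b(\coh(X))^G$. If $E$ has a single non-zero cohomology sheaf $\HH^n(E)$, then since $\HH^n$ is a triangulated functor the $G$-equivariant structure on $E$ descends to a $G$-equivariant structure on $\HH^n(E)$ in $\coh(X)$, lifting it to an object of $\coh^G(X)$; hence $E \cong \HH^n(E)[-n]$ lies in the essential image of $\Phi$. In the inductive step, the canonical truncation triangle $\tau^{\le n} E \to E \to \tau^{> n} E$ is automatically $G$-equivariant by naturality of truncation, so by induction both truncations come from $\D^b(\coh^G(X))$; the connecting morphism is an element of an $\Ext$ group which, by the full faithfulness step, lifts uniquely to $\coh^G(X)$. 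Completing this lifted morphism to a triangle in $\D^b(\coh^G(X))$ yields the desired preimage of $E$.

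The main obstacle is this last inductive step: one must check that the triangle built in $\D^b(\coh^G(X))$, when transported via $\Phi$, is actually isomorphic to the original $E$ rather than only cohomologically similar to it. This amounts to verifying that $\Phi$ reflects isomorphisms on truncation triangles, which follows by combining full faithfulness with the fact that a morphism of bounded complexes inducing an isomorphism on all cohomology sheaves is an isomorphism in the derived category. A subsidiary difficulty is correctly handling the coherence data of the equivariant structure on a triangle, which I would address by working with a DG-enhancement and the averaged group action there.
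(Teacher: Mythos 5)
Your route is genuinely different from the paper's. The paper does not argue by d\'evissage at all: it notes that the forgetful functor $\D^b(\coh^G(X))\to\D^b(\coh(X))$ and its biadjoint induction functor $F\mapsto\oplus_g\phi_g(F)$ generate a comonad on $\D^b(\coh(X))$ which is tautologically the comonad associated to the $G$-action, so that $\D^b(\coh(X))^G$ is identified with its category of comodules (Proposition~\ref{prop_four}); the comparison functor $\D^b(\coh^G(X))\to\D^b(\coh(X))^G$ is then an equivalence by the Beck-type criterion of Proposition~\ref{prop_suffcond}, whose two inputs are the splitting of the unit by the averaging operator $\frac1{|G|}\sum_h(\th_h)^{-1}$ and the idempotent completeness of $\D^b(\coh^G(X))$ (Balmer--Schlichting). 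Essential surjectivity is thus obtained formally, with no appeal to any triangulated structure on the target. Your full-faithfulness step is sound and is really the same averaging idempotent, repackaged as $\Ext^i_{\coh^G(X)}(F,H)\cong\Ext^i_{\coh(X)}(F,H)^G$ (the forgetful functor preserves injectives because its left adjoint is exact, and $(-)^G$ is exact since $|G|$ is invertible), followed by a standard d\'evissage to complexes.

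The genuine gap is in the inductive step of essential surjectivity, and it is not the one you flag. Having lifted the connecting morphism $\delta\colon\tau^{>n}E\to(\tau^{\le n}E)[1]$ to $\delta'\colon B\to A[1]$ in $\D^b(\coh^G(X))$ and formed its cone $E'$ there, you must produce an isomorphism $\Phi(E')\cong E$ \emph{in} $\D^b(\coh(X))^G$. Full faithfulness of $\Phi$ cannot help: $E$ is not yet known to lie in the image of $\Phi$, and ``reflecting isomorphisms'' presupposes a morphism $\Phi(E')\to E$ that you have not constructed. What is actually needed is the triangle-completion axiom in $\D^b(\coh(X))^G$: a completion $w$ of the partial morphism of triangles exists in $\D^b(\coh(X))$, but it has no reason to commute with the equivariant structures. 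The repair is again averaging. The set of completions is cut out by linear conditions that are stable under the conjugation action $w\mapsto(\th'_g)^{-1}\phi_g(w)\th_g$, so $\frac1{|G|}\sum_{g\in G}$ of the translates of $w$ is an equivariant completion; it is an isomorphism because the forgetful functor reflects isomorphisms and the five lemma applies downstairs in $\D^b(\coh(X))$. This is exactly the separability mechanism that Theorem~\ref{th_TTTGtriang} encapsulates via Balmer's theorem; without supplying it (your closing remark about reflecting isomorphisms and quasi-isomorphisms of complexes does not do so), the induction does not close. With that one insertion your proof is complete, at the cost of re-deriving by hand a fragment of what Proposition~\ref{prop_suffcond} gives for free in the proof of Theorem~\ref{th_AGAG}.
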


and 

\begin{theorem}[see Theorem \ref{th_descentforcoverings} below]
\label{th_descentforcoverings_intro}
Let $X$ be an algebraic variety over a field $\k$ and $G\subset \Pic(X)$ be a finite subgroup. Suppose $\har(\k)$ does not divide $|G|$. 
Let $G$ act on  $\D^b(\coh(X))$ by twisting into line bundles~$\LL\in G$.
Denote by $Y$ the relative spectrum 
$\Spec_X(\oplus_{\LL\in G}\LL^{-1}).$
Then $\D^b(\coh(X))^G\cong \D^b(\coh(Y))$.
\end{theorem}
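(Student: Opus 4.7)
The plan is to reduce the statement to Theorem~\ref{th_descentforequiv_intro} via a geometric duality and then apply the symmetry of the equivariance relation for finite abelian group actions, announced in the abstract. I begin with the geometry. Since $G$ is finite and abelian (being a subgroup of $\Pic(X)$) and $|G|$ is invertible in $\k$, the algebra $\AA=\oplus_{\LL\in G}\LL^{-1}$ is étale over $\O_X$: its fibre at a point $x$ is the group algebra $\kappa(x)[G]$, which is semisimple by Maschke's theorem and commutative, hence reduced. The natural $G$-grading of $\AA$ makes $\pi\colon Y=\Spec_X(\AA)\to X$ into a torsor for the Cartier dual $\hat G=\Hom(G,\Gm)$, with $\hat G$ acting through the characters of the grading; the invariant subring is $\AA^{\hat G}=\AA_0=\O_X$.

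Next I establish $\D^b(\coh(Y))^{\hat G}\cong\D^b(\coh(X))$ in two steps. Applying Theorem~\ref{th_descentforequiv_intro} to the $\hat G$-action on $Y$ yields $\D^b(\coh(Y))^{\hat G}\cong\D^b(\coh^{\hat G}(Y))$. Faithfully flat descent along the étale $\hat G$-torsor $\pi$ then identifies $\coh^{\hat G}(Y)\cong\coh(X)$, the quasi-inverse being $V\mapsto\pi^*V$ equipped with its canonical equivariant structure.

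Finally I invoke the symmetry theorem of this paper: the $\hat G$-action on $\D^b(\coh(Y))$ endows the equivariant category $\D^b(\coh(Y))^{\hat G}\simeq\D^b(\coh(X))$ with a canonical action of $\widehat{\hat G}=G$, and taking $G$-equivariants recovers $\D^b(\coh(Y))$. The main obstacle I anticipate is verifying that this manufactured $G$-action on $\D^b(\coh(X))$ coincides with the twisting action by line bundles hypothesised in the theorem. Unwinding the definitions, a character $\LL_0\in G=\widehat{\hat G}$ acts on a $\hat G$-equivariant object by rescaling its equivariance structure by the function $\chi\mapsto\chi(\LL_0)$; applied to an object of the form $\pi^*V=\oplus_{\LL\in G}\LL^{-1}\otimes V$, this shifts the $G$-grading by $\LL_0$, so the new $\hat G$-invariant (i.e.\ degree-zero) piece becomes $\LL_0\otimes V$. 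This exhibits the manufactured $G$-action on $\D^b(\coh(X))$ as the twisting action $V\mapsto\LL_0\otimes V$, completing the identification.
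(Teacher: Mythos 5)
Your route is genuinely different from the paper's, and it is worth comparing, but it has a real gap in the stated generality. The paper proves this theorem directly at the abelian level: it observes that a $G$-equivariant structure $(\theta_g)$ on a coherent sheaf $\FF$ is exactly the same data as a structure of a module over the sheaf of algebras $\RR=\oplus_{g}\LL(g)^{-1}$ (an element of $\Hom(\RR\otimes\FF,\FF)$ unpacks to a family $\theta_g\colon\FF\to\LL(g)\otimes\FF$, and compatibility with the multiplication is the cocycle condition), so $\coh(X)^G\cong\coh(Y)$, and then it applies Theorem~\ref{th_AGAG} to conclude $\D^b(\coh(X))^G\cong\D^b(\coh(X)^G)\cong\D^b(\coh(Y))$. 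This works over any field with $\har(\k)\nmid|G|$. You instead go through the covering $Y\to X$ viewed as a torsor under the dual group, apply Theorem~\ref{th_descentforequiv_intro} on the $Y$ side, and then invoke the reversion theorem to come back down; the paper itself records this relationship, but only as an \emph{example} following Theorem~\ref{th_isogeny}, in the opposite logical direction.

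The gap: your argument requires the Cartier dual to be a \emph{constant} finite group of order $|G|$ acting on $Y$ by honest automorphisms, and it requires Theorem~\ref{th_isogeny}, which the paper states and proves only for $\k$ algebraically closed (its proof uses the isomorphism $\oplus_{\chi\in G^{\vee}}\chi\cong\k[G]$, which forces $|G^{\vee}|=|G|$). The theorem you are proving assumes only that $\har(\k)$ does not divide $|G|$. Over, say, $\k=\mathbb{Q}$ with $G=\Z/3\Z$, one has $\Hom(G,\k^*)=0$: the covering $Y\to X$ is étale of degree $3$ but is not a torsor under any abstract group of order $3$ acting $\k$-linearly, so your first step (applying Theorem~\ref{th_descentforequiv_intro} to a $\hat G$-action on $Y$) and your last step (the reversion theorem) both fail. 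Your identification of the manufactured $G$-action with the twisting action in the final paragraph is essentially correct modulo sign conventions, so the proof does go through when $\k$ contains the $\exp(G)$-th roots of unity; to cover the general case you would need either the paper's direct argument or a descent-theoretic version of reversion for diagonalizable group schemes, which the paper does not develop.
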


Thus equivariant objects in triangulated categories are of some interest.
Let $\TTT$ be a triangulated category, suppose one has an exact action of a group $G$ on $\TTT$. Consider the ``quotient category'' $\TTT^G$. It is natural to ask whether $\TTT^G$ has any natural triangulation. Surprisingly, the answer is positive (up to some technical details), see Theorem~\ref{th_TTTGtriang}. We deduce it from results of P.\,Balmer \cite{Ba2}.

Usually when one gets a triangulated category it comes with some additional structure, like a DG-enhancement. The next question is: for an action of $G$ on $\TTT$, is it possible to construct a DG-enhancement of $\TTT^G$ given a DG-enhancement of $\TTT$?
We make the first step answering this question: construct an enhancement of $\TTT^G$ provided that the $G$-action on $\TTT$ is induced by a $G$-action on the enhancement.
More concretely, starting with a $G$-action on a pretriangulated DG-category $\AA$, we construct DG-category $Q_G(\AA)$ being a DG-enhancement of $H^0(\AA)^G$ (see Theorem~\ref{th_maingeneral}).
Similar construction was supposed by P.\,Sosna in paper~\cite{So}, which motivated our research of equivariant DG-categories. Compared to his one, our construction has better functorial properties, in particular, quasi-equivalent $\AA$ and 
$\AA'$ produce quasi-equivalent $Q_G(\AA)$ and $Q_G(\AA')$.

Another similar (and more simple) situation when the category $\TTT^G$ can be well-understood, is the following: $\TTT$ is the bounded derived category of some abelian category $\AA$ and the action on $\TTT$ is induced by an action on $\AA$. In this case $\TTT^G$ is just equivalent to the bounded derived category of $\AA^G$. This is not an original result: see for example X.-W.\,Chen \cite{Ch}, but we give the proof for the convenience of the reader.

The above case covers many interesting examples of group action on derived categories of coherent sheaves on algebraic varieties, like in Theorem \ref{th_descentforequiv_intro} or Theorem \ref{th_descentforcoverings_intro}.

In the general setting, the task of finding a DG-enhancement for the category $\TTT^G$ is not so easy. To use our above mentioned construction of $Q_G(\AA)$, it suffices to lift a group action on a triangulated category onto DG-level. In particular, it requires to lift an autoequivalence of a
triangulated category to a DG-autoequivalence of certain DG-enhancement. Even in a good geometric situation: for smooth projective algebraic variety~$X$ and an  autoequivalence of $\D^b(\coh(X))$ given by a kernel on $X\times X$, it is not clear how to do it. It is known (see Lunts and Orlov's paper~\cite{LO}) that for a projective variety $X$, DG-enhancement of $\D^b(\coh(X))$ is strongly unique.	But this uniqueness is too flexible, it allows to lift action onto DG-level only in a very weak sense: a sense of quasi-functors, which is not suitable for our purposes.
The problem of constructing a DG-enhancement of $\D^b(\coh(X))$ with a compatible group action on it seems to be rather interesting to investigate.

Our treatment is based on descent theory. This point of view was developed by the author in~\cite{El2} and involves the language of monads and modules.
We consider equivariant categories like $\TTT^G$ or $\D^b(\coh(X))^G$ as certain categories of descent data. Namely, with any action of a group $G$ on a category $\CC$ 
we associate a comonad $\TT_G$ on $\CC$ such that the corresponding category of comodules is equivalent to $\CC^G$. Thus, key point in the proof of Theorems~\ref{th_AGAG} and \ref{th_main} is to  show that a certain comparison functor is an equivalence. This is done using a rather specific but powerful special case of Beck theorem (see Proposition~\ref{prop_suffcond}) which is valid for triangulated categories.

In Section~\ref{section_reversion} one more application of our methods is given. We provide a nice categorical generalization of the following observation. 
Consider a Galois covering $X\to Y$ of algebraic varieties with an abelian Galois group. Theorems~\ref{th_descentforequiv_intro} and \ref{th_descentforcoverings_intro} imply that either of the categories $\D^b(\coh(X))$ and $\D^b(\coh(Y))$ can be reconstructed from another one as a category of equivariant objects with respect to a certain group action.

Using the language of monads, we demonstrate that this situation is typical, proving the following reversion theorem:
\begin{theorem}
Let $\BB$ and $\CC$ be idempotent complete additive categories over an algebraically closed  field $\k$, suppose  $\har(\k)$ does not divide $|G|$. Suppose $\BB\cong \CC^G$ for some action of a finite abelian group $G$ on $\CC$. Then $\CC\cong \BB^{G^{\vee}}$ for some action of the dual group $G^{\vee}$ on $\BB$.
\end{theorem}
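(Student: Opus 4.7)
\emph{The plan.} The strategy is to produce the required $G^\vee$-action on $\BB\simeq\CC^G$ by a standard twisting construction, and then to verify that $(\CC^G)^{G^\vee}\simeq\CC$ by exploiting the isotypic decomposition provided by the semisimplicity of $\k[G^\vee]$. Let $\phi_g$ denote the autoequivalences realising the $G$-action on $\CC$, so an object of $\CC^G$ is a pair $(X,\theta)$ with $\theta=\{\theta_g\colon X\xrightarrow{\sim}\phi_gX\}_{g\in G}$ satisfying the cocycle condition. For each character $\chi\in G^\vee$ I define an autoequivalence $F_\chi$ of $\CC^G$ by $F_\chi(X,\theta):=(X,\chi\cdot\theta)$, where $(\chi\cdot\theta)_g:=\chi(g)\theta_g$; these are the identity on underlying objects and satisfy $F_{\chi_1\chi_2}=F_{\chi_1}F_{\chi_2}$ strictly, producing the desired action of $G^\vee$ on $\BB$. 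Unravelling definitions, an object of $\BB^{G^\vee}$ is a triple $(X,\theta,\alpha)$ where $(X,\theta)\in\CC^G$ and $\alpha=\{\alpha_\chi\colon X\xrightarrow{\sim}X\}_{\chi\in G^\vee}$ is a $G^\vee$-action in $\CC$ satisfying $\phi_g(\alpha_\chi)\circ\theta_g=\chi(g)\,\theta_g\circ\alpha_\chi$ for all $g\in G$ and $\chi\in G^\vee$.

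Since $\k$ is algebraically closed with $\har(\k)\nmid|G|$, the algebra $\k[G^\vee]$ is semisimple with one-dimensional simples indexed by $(G^\vee)^\vee\cong G$; idempotent completeness of $\CC$ then yields an isotypic decomposition $X=\bigoplus_{h\in G}X_h$ with $\alpha_\chi|_{X_h}=\chi(h)\cdot\id$. Substituting this into the compatibility relation forces, for each $g$, the restriction $\theta_g|_{X_h}$ to land in $\phi_g(X_{gh})$ and to be an isomorphism onto that summand; taking $g=h^{-1}$ and $h$ arbitrary yields $X_h\simeq\phi_{h^{-1}}(X_e)$. I define $\Psi\colon\BB^{G^\vee}\to\CC$ by $\Psi(X,\theta,\alpha):=X_e$ and $\Phi\colon\CC\to\BB^{G^\vee}$ by sending $Y$ to $\bigoplus_{g\in G}\phi_g(Y)$ equipped with the induced $G$-equivariant structure and the $G^\vee$-equivariant structure under which the $g$-summand is the $g^{-1}$-isotypic component. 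The isomorphisms $X_h\simeq\phi_{h^{-1}}(X_e)$ assemble into a natural equivalence $(X,\theta,\alpha)\simeq\Phi(X_e)$, while $\Psi\Phi(Y)\simeq Y$ is immediate; hence $\Phi$ and $\Psi$ are mutually quasi-inverse.

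The main obstacle is coherence: the isomorphism $(X,\theta,\alpha)\simeq\Phi(X_e)$ must respect both equivariance structures together with the associators of the $G$- and $G^\vee$-actions, which amounts to careful but routine diagram chasing. A more conceptual alternative translates the argument into the monadic language of Section~\ref{section_group}: one identifies the comonad on $\CC$ associated with the composite forgetful functor $\BB^{G^\vee}\to\CC^G\to\CC$ and applies the Beck-type criterion of Proposition~\ref{prop_suffcond}, reducing the claim to standard properties of the induction/forgetful adjunction together with the orthogonality of characters of $G$.
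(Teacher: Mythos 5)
Your argument is correct, but it takes a genuinely different route from the paper's. You define the same twisting action of $G^{\vee}$ on $\CC^G$ and then compute $(\CC^G)^{G^{\vee}}$ by hand: unwinding an equivariant object into a triple $(X,\theta,\alpha)$, using orthogonality of characters together with idempotent completeness of $\CC$ to split $X$ into isotypic pieces $X=\bigoplus_{h\in G}X_h$ for the representation $\alpha$ of $G^{\vee}$, and reading off from the compatibility $\phi_g(\alpha_\chi)\circ\theta_g=\chi(g)\,\theta_g\circ\alpha_\chi$ that $\theta_g$ permutes these pieces, so that the whole object is induced from $X_e$. The paper never decomposes individual objects: it stays in the monadic language, proving (Proposition~\ref{prop_TTRR} and the proof of Theorem~\ref{th_isogeny}) that the comonad $\TT(q^*,q_*)$ on $\CC^G$ arising from the $G^{\vee}$-action and the comonad $\TT(p_*,p^*)$ arising from the original adjunction are both isomorphic to the regular-representation comonad $\RR=\k[G]\otimes(-)$, and then applies the comparison criterion of Proposition~\ref{prop_suffcond} (via Proposition~\ref{prop_four}) on both sides, so that the chain $(\CC^G)^{G^{\vee}}\cong(\CC^G)_{\TT(q^*,q_*)}\cong(\CC^G)_{\RR}\cong(\CC^G)_{\TT(p_*,p^*)}\cong\CC$ does all the work. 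Both proofs rest on the same two inputs~--- the decomposition of $\k[G]$ into the characters of $G$ over an algebraically closed field of good characteristic, and idempotent completeness of $\CC$~--- but the monadic route confines all coherence to a single check that one isomorphism of functors respects counit and comultiplication (a computation with representations of $G$), whereas your route makes the quasi-inverse functor completely explicit, $(X,\theta,\alpha)\mapsto X_e$, at the price of the verifications you defer as ``routine diagram chasing''. Those verifications do go through (for instance, the $G$-equivariance of $\bigoplus_h\theta_{h^{-1}}|_{X_h}$ is exactly the cocycle identity for $\theta$ evaluated at the pair $(g,(gk)^{-1})$, and the claim that each block $\theta_g|_{X_h}\colon X_h\to\phi_g(X_{gh})$ is invertible needs the small observation that $\theta_g^{-1}$ also respects the canonical isotypic decompositions), but they are the real content of the direct argument and should be written out; your closing remark about recasting everything via Proposition~\ref{prop_suffcond} is, in effect, a description of the paper's actual proof.
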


The paper is organized as follows.
In Section ~\ref{section_monads} we recall necessary facts about monads and comonads. In Section ~\ref{section_group} we define group actions and equivariant objects and 
introduce adjoint functors, monads and comonads needed for descent theory.
In Section~\ref{section_reversion} we apply methods from Section~\ref{section_group} and prove ``reversion theorem'' for abelian group actions. This result is not needed for the sequel.
In Section~\ref{section_trcat} we recall basics about triangulated categories and DG-categories.
In Section~\ref{section_balmer}, for a triangulated category $\TTT$ with a group action, we define shift
functor and a class of distinguished triangles in $\TTT^G$. Using Balmer's results, we explain that $\TTT^G$ is also a triangulated category (up to some technical details).
In Sections~\ref{section_derived} and \ref{section_dg} we go further and show that the category $\TTT^G$ can be well-understood in the two important cases. In Section~\ref{section_derived} we consider action of $G$ on a derived category $\D^b(\AA)$ of an abelian category $\AA$, induced by a $G$-action on $\AA$ and demonstrate that $\D^b(\AA)^G\cong \D^b(\AA^G)$. In Section~\ref{section_dg} we consider action of $G$ on a DG-enhanced triangulated category $\TTT$, induced by a $G$-action on a DG-enhancement $\AA$. In this setting we construct a DG-category $Q_G(\AA)$ which is an enhancement of $\TTT^G$. 

I thank Alexander Kuznetsov, Dmitry Orlov and Evgeny Shinder for valuable remarks and suggestions, and Sergey Galkin for his motivating interest to equivariant DG-categories.

\section{Preliminaries on (co)monads}
\label{section_monads}

We recall some facts concerning (co)monads and (co)modules. More details can be found in books by
Barr-Wells~\cite[chapter 3]{TTT} and MacLane~\cite[chapter 6]{ML}.

\medskip
Let $\CC$ be a category.
\begin{definition}
\label{def_comonad} A~\emph{comonad} $\TT=(T,\e,\delta)$  on the category
$\CC$ consists of a functor $T\colon \CC\ra \CC$ and of natural transformations of functors $\e\colon T\ra
\Id_{\CC}$ and $\delta\colon T\ra T^2=TT$ such that the following diagrams are commutative:
$$\xymatrix{
T \ar[r]^{\delta} \ar@{=}[rd] \ar[d]^{\delta} & T^2 \ar[d]^{T\e} \\ T^2 \ar[r]^{\e
T} & T, }\qquad \xymatrix{T \ar[r]^{\delta} \ar[d]^{\delta} & T^2 \ar[d]^{T\delta}\\
T^2 \ar[r]^{\delta T} & T^3. }
$$
\end{definition}

\begin{definition}
Two comonads
$\TT=(T,\e,\delta)$  and $\TT'=(T',\e',\delta')$  on the category $\CC$ are \emph{isomorphic} if there exists an isomorphism of functors $T\to T'$ compatible with $\e$-s and $\delta$-s.
\end{definition}

\begin{example}
\label{mainexample} Consider a pair of adjoint functors: $P^* \colon\BB\ra \CC$
(left) and $P_*\colon \CC\ra \BB$ (right). Let $\eta\colon \Id_{\BB}\ra P_*P^*$ and
$\e\colon P^*P_*\ra \Id_{\CC}$ be the natural adjunction morphisms. Define a triple $(T,\e,\delta)$ by taking $T=P^*P_*$ and $\delta=P^*\eta P_* \colon P^*P_*\ra P^*P_*P^*P_*$. Then $\TT=(T,\e,\delta)$ is a comonad on $\CC$.
\end{example}

\begin{definition}
\label{def_tpp}
The comonad introduced above will be denoted $\TT(P^*,P_*)$.
\end{definition}

Essentially, any comonad can be obtained in this way from a pair of adjoint functors. This follows from the below construction due to Eilenberg-Moore.

\begin{definition}
\label{def_comodule}
Suppose $\TT=(T,\e,\delta)$ is a comonad on $\CC$.
A \emph{comodule} over $\TT$ (it is sometimes called a \emph{$\TT$-coalgebra}) is a pair
$(F,h)$ where $F\in \Ob  \CC$ and $h\colon F\ra TF$ is a morphism
satisfying the following two conditions:
\begin{enumerate}
\item the composition
$$F\xra{h} TF \xra{\e F} F$$
is the identity;
\item the diagram
$$\xymatrix{
F \ar[r]^h \ar[d]^h  &  TF \ar[d]^{Th} \\
TF \ar[r]^{\delta F} & T^2F }$$ commutes.
\end{enumerate}
A \emph{morphism} between
comodules is defined in the natural way.

\end{definition}

All comodules over a given comonad $\TT$ on $\CC$ form a category
which is denoted $\CC_{\TT}$. Define a functor $Q_*\colon \CC\ra
\CC_{\TT}$ by
$$Q_*F=(TF,\delta F),\qquad Q_*f=Tf,$$
define $Q^*\colon \CC_{\TT}\ra \CC$ to be the forgetful functor: $(F,h)\mapsto F$. Then
the pair of functors $(Q^*,Q_*)$ is an adjoint pair and 
the comonad $\TT(Q^*,Q_*)$ (see Definition~\ref{def_tpp}) is $\TT$.

\begin{predl}[Comparison theorem, {\cite[3.2.3]{TTT}, \cite[6.3]{ML}}]
\label{th_comparison} Assume that a comonad $\TT=(T,\e,\delta)$ on
a category $\CC$ is defined by an adjoint pair of functors
$P^*\colon \BB\ra \CC, P_*\colon \CC\ra \BB$. Then there exist a
unique (up to an isomorphism) functor (called \emph{comparison
functor}) $\Phi\colon \BB\ra \CC_{\TT}$ such that the diagram of
categories
$$\xymatrix{
&& {\BB} \ar[dd]^{\Phi} \ar@<1mm>[lld]^{P^*}\\
{\CC} \ar@<1mm>[rru]^{P_*} \ar@<-1mm>[rrd]_{Q_*} && \\
&&{\CC_{\TT}} \ar@<-1mm>[llu]_{Q^*} }$$ commutes, i.e. both triangles are commutative:
$$\Phi P_*\cong Q_*,\qquad Q^*\Phi\cong P^*.$$
\end{predl}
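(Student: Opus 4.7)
The plan is to construct $\Phi$ explicitly from the unit of the adjunction, verify the comodule axioms, and then address the commutativity conditions and uniqueness. Let $\eta\colon \Id_\BB\to P_*P^*$ denote the unit of $(P^*, P_*)$. I propose the formula
\[
\Phi(B) = (P^* B,\ P^*\eta_B),\qquad \Phi(f) = P^* f,
\]
so that the underlying $\CC$-object of $\Phi(B)$ is $P^* B$ and the structure map is $P^* \eta_B \colon P^* B \to P^* P_* P^* B = T P^* B$. First I would verify that $(P^* B, P^* \eta_B)$ is a $\TT$-comodule in the sense of Definition~\ref{def_comodule}: the counit axiom $\e_{P^* B} \circ P^* \eta_B = \id_{P^* B}$ is precisely one of the triangle identities for $(P^*, P_*)$, while the coassociativity axiom, after substituting $\delta = P^* \eta P_*$ and $T = P^* P_*$, reduces to
\[
P^*(\eta_{P_* P^* B} \circ \eta_B) = P^*(P_* P^* \eta_B \circ \eta_B),
\]
which is $P^*$ applied to the naturality square of $\eta$ at the morphism $\eta_B$. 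Functoriality of $\Phi$ is then immediate from functoriality of $P^*$ and naturality of $\eta$.

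The two commutativity conditions are essentially built into the construction. The equality $Q^* \Phi = P^*$ holds on the nose, since $Q^*$ is the forgetful functor $(F, h) \mapsto F$. For $\Phi P_* = Q_*$, one computes, for $C \in \CC$,
\[
\Phi(P_* C) = (P^* P_* C,\ P^* \eta_{P_* C}) = (T C,\ \delta_C) = Q_* C,
\]
using just the definition of $\delta$; naturality in $C$ is clear.

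The main obstacle will be uniqueness. Given another functor $\Phi'$ satisfying $Q^* \Phi' \cong P^*$ and $\Phi' P_* \cong Q_*$, one can use the first isomorphism to assume, up to replacing $\Phi'$ by a naturally isomorphic functor, that the underlying $\CC$-object of $\Phi'(B)$ is $P^* B$, so $\Phi'(B) = (P^* B, h'_B)$ for some structure map $h'_B$. Applying $\Phi'$ to the unit $\eta_B \colon B \to P_* P^* B$ produces a comodule morphism $(P^* B, h'_B) \to \Phi'(P_* P^* B) \cong Q_* P^* B$ whose underlying $\CC$-morphism is $P^* \eta_B$. Now the adjunction $(Q^*, Q_*)$ identifies comodule morphisms $(F, h) \to Q_* C$ with $\CC$-morphisms $F \to C$; compatibility of the prescribed isomorphisms with the adjunction units forces the adjoint here to be $\id_{P^* B}$, and unwinding the correspondence gives $h'_B = P^* \eta_B$. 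Hence $\Phi' \cong \Phi$, completing the proof.
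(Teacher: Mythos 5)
The paper offers no proof of this proposition at all\,---\,it is quoted directly from Barr--Wells and MacLane\,---\,so the only comparison available is with the cited references, and your argument is precisely the standard textbook one. The construction $\Phi(B)=(P^*B,\,P^*\eta_B)$, the verification of the counit axiom via the triangle identity and of coassociativity via naturality of $\eta$ at $\eta_B$, and the two commutativity checks ($Q^*\Phi=P^*$ on the nose, $\Phi P_*=Q_*$ from $\delta=P^*\eta P_*$) are all correct and complete. The one soft spot is uniqueness: the sentence ``compatibility of the prescribed isomorphisms with the adjunction units forces the adjoint here to be $\id_{P^*B}$'' hides the actual computation and the bookkeeping of the natural isomorphisms $Q^*\Phi'\cong P^*$ and $\Phi'P_*\cong Q_*$. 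After rigidifying so that $Q^*\Phi'=P^*$ strictly, the clean way to finish is to write out the comodule-morphism identity for $\Phi'(\eta_B)\colon (P^*B,h'_B)\to Q_*(P^*B)$, namely $\delta_{P^*B}\circ P^*\eta_B = T(P^*\eta_B)\circ h'_B$, and compose both sides with $T\e_{P^*B}$: the left side becomes $P^*\eta_B$ by the comonad identity $T\e\circ\delta=\id$, and the right side becomes $h'_B$ by the triangle identity, so $h'_B=P^*\eta_B$. Your uniqueness paragraph is a correct sketch of exactly this, but would need that chase spelled out to be complete.
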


We want to understand when comparison functor
is an equivalence. Exact criterion is given by Beck theorem (see~\cite[3.14]{TTT} or \cite[6.7]{ML}) and is rather complicated. 
Below we present a simple sufficient condition on an adjoint pair providing comparison functor is an equivalence.

First we recall
\begin{definition}
\label{def_ic}
A category $\CC$ is called \emph{idempotent complete} (or \emph{Karoubian complete} or \emph{Cauchy complete}) if any projector in $\CC$ splits. That is, for any morphism 
$p\colon X\to X$ in $\CC$ such that $p^2=p$ there exists an object $X'$ in $\CC$  and a diagram in $\CC$
$$\xymatrix{
X'\ar@/^1em/[rr]^i && X\ar@/^1em/[ll]^s}$$
such that $si=1_{X'}, is=p$. The object $X'$ is called a \emph{retract} of $X$.
\end{definition}

\begin{predl}[see \cite{Me}, Corollary 3.17 and Proposition 3.18, or~\cite{El2}, Corollaries 3.10 and 3.11]
\label{prop_suffcond} 
In the above notation suppose that the category  $\BB$ is
idempotent complete. If the natural morphism of functors $\eta \colon
\Id_{\BB}\to P_*P^*$ is a split monomorphism (i.e. has a left inverse morphism of functors) then the comparison functor $\Phi\colon 
\BB\to \CC_{\TT}$ is an
equivalence.

If the natural morphism $\eta(F) \colon
F\to P_*P^*(F)$ splits for any object $F\in \BB$  then the comparison functor $\Phi\colon 
\BB\to \CC_{\TT}$ is fully faithful.

\end{predl}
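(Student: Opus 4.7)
The plan is to handle the two claims separately: full faithfulness needs only pointwise left inverses of~$\eta$, while essential surjectivity exploits naturality of $\rho$ together with idempotent completeness of~$\BB$.

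For full faithfulness, given a comodule morphism $\phi \colon (P^*B_1, P^*\eta_{B_1}) \to (P^*B_2, P^*\eta_{B_2})$ and a pointwise left inverse $\rho_{B_2}$ of $\eta_{B_2}$, I would set $g = \rho_{B_2} \circ P_*\phi \circ \eta_{B_1}$ and check $P^*g = \phi$ by a direct computation using the comodule morphism condition $T\phi \circ P^*\eta_{B_1} = P^*\eta_{B_2} \circ \phi$ and the identity $\rho_{B_2} \circ \eta_{B_2} = \Id_{B_2}$; this yields fullness. Faithfulness is immediate since adjunction and naturality of $\eta$ reduce $P^*g = P^*g'$ to $\eta_{B_2} \circ g = \eta_{B_2} \circ g'$, and $\eta_{B_2}$ is a monomorphism because it splits.

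For essential surjectivity, let $\rho \colon P_*P^* \to \Id_\BB$ be a natural retraction of $\eta$ and fix $(C, h) \in \CC_\TT$. I would set $e := \rho_{P_*C} \circ P_*h \colon P_*C \to P_*C$ and verify $e^2 = e$ by a three-step rewriting: naturality of $\rho$ at $P_*h$ converts the inner $P_*h \circ \rho_{P_*C}$ into $\rho_{P_*TC} \circ P_*P^*P_*h$; the second comodule axiom $P^*P_*h \circ h = P^*\eta_{P_*C} \circ h$ applied under $P_*$ replaces $P_*P^*P_*h \circ P_*h$ by $P_*P^*\eta_{P_*C} \circ P_*h$; finally, naturality of $\rho$ at $\eta_{P_*C}$ combined with $\rho \circ \eta = \Id$ collapses the expression back to $e$. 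Since $\BB$ is idempotent complete, $e$ splits through some $B \in \BB$ with $i \colon B \to P_*C$, $s \colon P_*C \to B$, $si = \Id_B$ and $is = e$; this $B$ will be the candidate preimage of $(C, h)$ under~$\Phi$.

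To build the isomorphism $\Phi(B) = (P^*B, P^*\eta_B) \to (C, h)$, I take $\phi := \e_C \circ P^*i$ with candidate inverse $\psi := P^*s \circ h$. A manipulation parallel to the one used for $e^2 = e$ produces the dual identity $P_*h \circ e = \eta_{P_*C} \circ e$, whence $P_*h \circ i = \eta_{P_*C} \circ i$ using $e \circ i = i$; applying $P^*$, precomposing with $\e_{TC}$, and invoking naturality of $\e$ together with the counit identity $\e_{TC} \circ \delta_C = \Id_{TC}$, this yields the central equality $h \circ \e_C \circ P^*i = P^*i$. From it a short check using the comodule axioms gives $\phi \psi = \Id_C$, while $P^*i \circ \psi \phi = h \circ \e_C \circ P^*i = P^*i$ and cancellation of the split monomorphism $P^*i$ gives $\psi \phi = \Id_{P^*B}$; the same equality also shows that $\phi$ respects the comodule structures. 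The main obstacle is precisely the derivation of $P_*h \circ i = \eta_{P_*C} \circ i$, where naturality (rather than mere pointwise splitting) of $\rho$ is indispensable: this identity expresses $B$ as the equalizer of the coreflexive pair $(P_*h, \eta_{P_*C})$, which is exactly what Beck's theorem requires for~$\Phi$ to be an equivalence.
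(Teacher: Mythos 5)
Your proof is correct, and all the computations check out: the fullness formula $g=\rho_{B_2}\circ P_*\phi\circ\eta_{B_1}$ does satisfy $P^*g=\phi$ via the comodule-morphism identity; faithfulness follows since a pointwise-split $\eta_{B_2}$ is monic; the idempotency $e^2=e$ for $e=\rho_{P_*C}\circ P_*h$ and the key identity $P_*h\circ i=\eta_{P_*C}\circ i$ both follow from naturality of $\rho$ exactly as you describe; and the central equality $h\circ\e_C\circ P^*i=P^*i$ yields both $\psi\phi=\Id$ and compatibility of $\phi$ with the comodule structures (the latter also needs the triangle identity $P_*\e_C\circ\eta_{P_*C}=\Id$, a routine step you leave implicit). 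The only comparison to make is that the paper gives no proof at all for this Proposition: it is quoted from Mesablishvili and from the author's earlier paper on cohomological descent. Your argument is the standard proof of this ``split'' variant of Beck's comonadicity theorem, reconstructing the preimage of a comodule $(C,h)$ as the retract of $P_*C$ along the idempotent $\rho_{P_*C}\circ P_*h$, which is essentially what those references do; so you have supplied a self-contained justification of a result the paper treats as a black box. You also correctly isolate where naturality of $\rho$ (as opposed to mere pointwise splitting) is genuinely used, which is precisely the distinction between the two clauses of the Proposition.
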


The notion of a monad is dual to the notion of a comonad. 
We present below related definitions and facts.

\begin{definition}
\label{def_monad} A~\emph{monad} $\SS=(S,\eta,\mu)$ on a category 
$\CC$ consists of a functor $S\colon \CC\ra \CC$ and of natural transformations of functors $\eta\colon \Id_{\CC}\ra S$ and $\mu\colon T^2=TT\ra T$ such that the following diagrams are commutative:
$$\xymatrix{
S \ar[r]^{\eta S} \ar@{=}[rd] \ar[d]^{S\eta} & S^2 \ar[d]^{\mu} \\ S^2 \ar[r]^{\mu} & S, }\qquad 
\xymatrix{S^3 \ar[r]^{S\mu} \ar[d]^{\mu S} & S^2 \ar[d]^{\mu}\\
S^2 \ar[r]^{\mu} & S. }
$$
\end{definition}


\begin{definition}
\label{def_spp}
Consider a pair of adjoint functors: $P^* \colon\CC\ra \BB$
(left) and $P_*\colon \BB\ra \CC$ (right). The endofunctor $S=P_*P^*\colon \CC\to\CC$ together with natural adjunction morphisms forms a monad $\SS=(S,\eta,\mu)$ on $\CC$.
\end{definition}

\begin{definition}
\label{def_module}
Suppose $\SS=(S,\eta,\mu)$ is a monad on $\CC$.
A \emph{module} over $\SS$ is a pair
$(F,h)$ where $F\in \Ob  \CC$ and $h\colon SF\ra F$ is a morphism
satisfying the following two conditions:
$1_F=h\circ \eta F\colon F\to F$, 
$h\circ \mu F=h\circ Sh\colon S^2F\to F$.
\end{definition}

All modules over a given monad $\SS$ on $\CC$ form a category
which is denoted $\CC^{\SS}$. 
Define a functor $Q^*\colon \CC\ra
\CC^{\SS}$ by
$$Q^*F=(SF,\mu F),\qquad Q^*f=Sf,$$
let $Q_*\colon \CC^{\SS}\ra \CC$ be the forgetful functor. Then
the pair of functors $(Q^*,Q_*)$ is an adjoint pair and the monad $\SS(Q^*,Q_*)$  is $\SS$.

\begin{predl}[Comparison theorem for monads]
Assume that a monad $\SS=\SS(P^*,P_*)$ on
a category $\CC$ is defined by an adjoint pair of functors
$P^*\colon \CC\ra \BB, P_*\colon \BB\ra \CC$. Then there exists a
unique (up to an isomorphism) functor (called \emph{comparison
functor}) $\Phi\colon \BB\ra \CC^{\SS}$ such that the diagram of
categories
$$\xymatrix{
&& {\BB} \ar[dd]^{\Phi} \ar@<1mm>[lld]^{P_*}\\
{\CC} \ar@<1mm>[rru]^{P^*} \ar@<-1mm>[rrd]_{Q^*} && \\
&&{\CC^{\SS}} \ar@<-1mm>[llu]_{Q_*} }$$ commutes, i.e. both triangles are commutative:
$$\Phi P^*\cong Q^*,\qquad Q_*\Phi\cong P_*.$$
\end{predl}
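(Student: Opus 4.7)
The plan is to observe that this theorem is formally dual to the comparison theorem for comonads already recalled in the excerpt (and attributed to~\cite[3.2.3]{TTT}, \cite[6.3]{ML}). Concretely, passing to opposite categories interchanges monads with comonads, modules with comodules, and left adjoints with right adjoints, so the statement for $\SS = P_*P^*$ on $\CC$ follows by applying the comonad comparison theorem to $(P^*)^{op} \colon \BB^{op} \to \CC^{op}$ (which is right adjoint to $(P_*)^{op}$). For concreteness, however, I would also give the direct construction.

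Let $\eta \colon \Id_\CC \to P_*P^* = S$ and $\varepsilon \colon P^*P_* \to \Id_\BB$ be the unit and counit of the adjunction $(P^*, P_*)$, so that $\mu = P_*\varepsilon P^* \colon S^2 \to S$. Define $\Phi \colon \BB \to \CC^\SS$ on objects by
$$\Phi(B) = (P_*B,\ P_*\varepsilon B \colon SP_*B = P_*P^*P_*B \to P_*B)$$
and on morphisms by $\Phi(f) = P_*f$. The unit axiom $1_{P_*B} = P_*\varepsilon B \circ \eta P_*B$ is one of the triangle identities, and the associativity axiom $P_*\varepsilon B \circ \mu P_*B = P_*\varepsilon B \circ S(P_*\varepsilon B)$ follows from naturality of $\varepsilon$ applied to the morphism $\varepsilon B$. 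Hence $\Phi(B)$ is indeed an $\SS$-module and $\Phi$ is a well-defined functor.

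Next I would verify the two commutativities. For $C \in \CC$,
$$\Phi P^*(C) = (P_*P^*C,\ P_*\varepsilon P^*C) = (SC, \mu C) = Q^*(C),$$
and $\Phi P^*(f) = P_*P^*(f) = S(f) = Q^*(f)$, giving $\Phi P^* = Q^*$ on the nose. The identity $Q_*\Phi = P_*$ is immediate from the definition of the forgetful functor $Q_*$.

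For uniqueness up to isomorphism, suppose $\Phi' \colon \BB \to \CC^\SS$ also makes both triangles commute. Then for every $B \in \BB$ one has $Q_*\Phi'(B) = P_*B$, so $\Phi'(B) = (P_*B, h'_B)$ for some structure map $h'_B \colon SP_*B \to P_*B$. Applying $\Phi'$ to the counit morphism $\varepsilon B \colon P^*P_*B \to B$, which lives in $\BB$, and using $\Phi' P^* \cong Q^*$ together with the fact that $\Phi'(\varepsilon B)$ must be a morphism of $\SS$-modules from $(SP_*B, \mu P_*B)$ to $(P_*B, h'_B)$, one deduces that $h'_B$ is forced to equal $P_*\varepsilon B$ up to a canonical isomorphism; naturality of this comparison in $B$ produces an isomorphism of functors $\Phi' \cong \Phi$.

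The only place where any real attention is needed is the uniqueness part, since the construction and the two commutativities are direct applications of the triangle identities. In particular there is no substantial obstacle: the argument is purely formal and the work has essentially been done once in the comonad version above.
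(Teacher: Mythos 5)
The paper gives no proof of this proposition: it is stated as the formal dual of the comonad comparison theorem (Proposition~2.7), which is itself only cited from Barr--Wells and Mac\,Lane. Your opening duality observation is exactly the intended route, and your explicit construction $\Phi(B)=(P_*B,\,P_*\varepsilon B)$ together with the verification via the triangle identities and naturality of $\varepsilon$ is the standard textbook proof found in those references; the uniqueness sketch, though compressed, completes correctly by precomposing the module-morphism identity for $\Phi'(\varepsilon B)$ with $S\eta P_*B$ and using $\mu\circ S\eta=1$. The proposal is correct and consistent with the paper.
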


Here we recall about idempotent completion of categories. We refer to~\cite{Ba} or~\cite{BD} for details. 

For any category $\CC$, there exists a fully faithful embedding $\CC\to\bar\CC$ to an idempotent complete category $\bar\CC$ such that any object in $\bar\CC$ is a retract of some object in $\CC$. Such embedding is unique up to an isomorphism, and category $\bar\CC$ is called \emph{idempotent completion} of $\CC$. We will need the following property of idempotent completion:

\begin{predl}[{\cite[Prop 5.1.4.9]{Lu}} or {\cite[Prop. 1.3]{Ba}}]
\label{predl_property}
For an idempotent completion $\bar\CC$ of a category $\CC$ and for an idempotent complete category $\DD$ one has an equivalence 
$$Fun(\bar\CC,\DD)\cong Fun(\CC,\DD),$$
where $Fun$ denotes the category of functors and natural transformations.
\end{predl}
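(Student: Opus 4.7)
The plan is to produce a quasi-inverse to the restriction functor
$$i^*\colon \mathrm{Fun}(\bar\CC,\DD)\to \mathrm{Fun}(\CC,\DD)$$
along the canonical embedding $i\colon \CC\to\bar\CC$, and then check that the two compositions are isomorphic to the identity. I would use the explicit (Karoubi envelope) model of $\bar\CC$: its objects are pairs $(X,e)$ with $X\in\CC$ and $e\colon X\to X$ satisfying $e^2=e$, its morphisms $(X,e)\to(Y,f)$ are maps $g\colon X\to Y$ in $\CC$ with $fge=g$, and $i$ sends $X$ to $(X,\id_X)$. In this model every object of $\bar\CC$ is a retract of an object of $\CC$ in a canonical way.

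First I would construct the extension $E\colon \mathrm{Fun}(\CC,\DD)\to\mathrm{Fun}(\bar\CC,\DD)$. Given $F\colon \CC\to\DD$ and an object $(X,e)\in\bar\CC$, the morphism $F(e)$ is an idempotent in $\DD$, which splits by assumption; I set $E(F)(X,e)$ to be a chosen splitting object, together with section/retraction $s_{(X,e)}\colon F(X)\to E(F)(X,e)$ and $i_{(X,e)}\colon E(F)(X,e)\to F(X)$ satisfying $i_{(X,e)}s_{(X,e)}=F(e)$ and $s_{(X,e)}i_{(X,e)}=\id$. For a morphism $g\colon (X,e)\to(Y,f)$ in $\bar\CC$, define $E(F)(g)=s_{(Y,f)}\circ F(g)\circ i_{(X,e)}$; the relation $fge=g$ together with the splitting identities makes this functorial. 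For $(X,\id_X)$ one can choose the trivial splitting, which yields a canonical isomorphism $E(F)\circ i\cong F$, hence $i^*\circ E\cong \Id$. A natural transformation $\alpha\colon F\to F'$ extends by $E(\alpha)_{(X,e)}=s'_{(X,e)}\circ \alpha_X\circ i_{(X,e)}$.

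Next I would show $i^*$ is fully faithful (which together with the above implies $E\circ i^*\cong\Id$ as well). Given $G,G'\colon\bar\CC\to\DD$ and a natural transformation $\beta\colon G\circ i\to G'\circ i$, any object $(X,e)$ fits in a diagram $(X,e)\xrightarrow{\iota}(X,\id_X)\xrightarrow{\sigma}(X,e)$ in $\bar\CC$ with $\iota$ given by $e$ in either direction, $\sigma\iota=\id_{(X,e)}$, $\iota\sigma=G(e_{\mathrm{as\ map}})$. Applying $G,G'$ and using naturality of the hypothetical extension forces
$$\bar\beta_{(X,e)}=G'(\sigma)\circ \beta_X\circ G(\iota),$$
which gives uniqueness. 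Existence is verified by checking naturality for an arbitrary morphism $g\colon(X,e)\to(Y,f)$ using $fge=g$ and the identities above; this is a direct diagram chase. Restricting $\bar\beta$ along $i$ recovers $\beta$ up to the chosen splitting isomorphisms, giving surjectivity on natural transformations. Injectivity follows from the same formula applied to the difference of two extensions.

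The only step that requires attention is functoriality of $E(F)$ and naturality of $\bar\beta$; these are routine but rely on the key identity $fge=g$ in $\bar\CC$ interacting correctly with the splitting data in $\DD$. No step is deep: the whole proposition is essentially the universal property of the Karoubi envelope applied to functor categories, and the argument is standard once the explicit model of $\bar\CC$ is in place.
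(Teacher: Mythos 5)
Your proof is correct, but note that the paper does not actually prove this proposition: it is stated as a known result with references to Lurie and to Balmer--Schlichting, so there is no in-paper argument to compare against. Your direct argument via the explicit Karoubi-envelope model (objects $(X,e)$ with $e^2=e$, morphisms $g$ with $fge=g$) is the standard one and all the key identities check out: functoriality of $E(F)$ follows from $fg=g=ge$ together with $s\circ i=\id$, and the retraction $(X,e)\rightleftarrows (X,\id_X)$ via $e$ forces the formula $\bar\beta_{(X,e)}=G'(\sigma)\circ\beta_X\circ G(\iota)$, giving both uniqueness and existence of the extension of a natural transformation. Two small remarks. First, the paper defines $\bar\CC$ by a universal property (a fully faithful embedding into an idempotent complete category through which every object is a retract) rather than by the explicit model; since it also asserts uniqueness of the completion up to equivalence, your use of the concrete model is legitimate, but strictly speaking you should say you are invoking that uniqueness. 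Second, the line ``$\iota\sigma=G(e_{\mathrm{as\ map}})$'' is garbled: you mean $\iota\sigma=e$ as an endomorphism of $(X,\id_X)$ in $\bar\CC$, hence $G(\iota)G(\sigma)=G(e)$. It is also worth observing that your argument shows full faithfulness of $i^*$ without using idempotent completeness of $\DD$; that hypothesis is needed only for essential surjectivity, i.e.\ for constructing the extension $E$.
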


\begin{predl}
\label{predl_extend1}
Let $\CC$ be a category and $\bar\CC$  be an idempotent completion of $\CC$. 
Let $\SS=(S,\eta,\mu)$ be a monad on $\CC$. Then~$\SS$ extends uniquely to a monad on $\bar\CC$.
\end{predl}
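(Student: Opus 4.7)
The plan is to invoke Proposition~\ref{predl_property} repeatedly, first at the level of functors and then at the level of natural transformations, and finally observe that the monad axioms transfer automatically because the restriction functor is fully faithful.

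First, denote by $i\colon \CC\hookrightarrow\bar\CC$ the embedding. The composition $iS\colon \CC\to\bar\CC$ lands in an idempotent complete category, so Proposition~\ref{predl_property} produces an extension $\bar S\colon\bar\CC\to\bar\CC$, unique up to unique isomorphism, equipped with a natural isomorphism $\alpha\colon\bar S\,i\xrightarrow{\sim} iS$. Iterating, $\bar S^2 i\cong \bar S\,iS\cong iS^2$ via $\alpha$, so $\bar S^2$ is (canonically isomorphic to) the extension of $iS^2$; similarly, $\Id_{\bar\CC}$ is the extension of $i=i\,\Id_\CC$.

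Next, I extend the structure transformations. The restriction functor $\mathrm{Res}\colon Fun(\bar\CC,\bar\CC)\to Fun(\CC,\bar\CC)$ is an equivalence by Proposition~\ref{predl_property}, hence fully faithful on natural transformations. The unit $\eta\colon\Id_\CC\to S$ whiskered with $i$ gives a natural transformation $i\eta\colon i\,\Id_\CC\to iS$, which under $\mathrm{Res}^{-1}$ (and the canonical identifications above) corresponds to a unique $\bar\eta\colon\Id_{\bar\CC}\to\bar S$ whose restriction to $\CC$ is $i\eta$. Similarly $\mu\colon S^2\to S$ whiskered with $i$ corresponds to a unique $\bar\mu\colon\bar S^2\to\bar S$ restricting (after the identifications) to $i\mu$.

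Now the monad axioms for $(\bar S,\bar\eta,\bar\mu)$ are equalities of natural transformations between functors $\bar\CC\to\bar\CC$. Restricting each such equality along $i$ yields an equality of natural transformations between functors $\CC\to\bar\CC$, which is precisely $i$ applied to the corresponding axiom for $(S,\eta,\mu)$ and hence holds. Since $\mathrm{Res}$ is faithful, the axioms on $\bar\CC$ follow. Uniqueness of the extension as a monad is immediate: any monad structure $(\bar S',\bar\eta',\bar\mu')$ on $\bar\CC$ restricting to $\SS$ along $i$ furnishes in particular an extension of $iS$, so by the uniqueness part of Proposition~\ref{predl_property} there is a unique functor isomorphism $\bar S\cong \bar S'$ compatible with the restrictions, and then the uniqueness for natural transformations forces this isomorphism to intertwine the units and multiplications. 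The only real thing to check is that the canonical identifications $\bar S^2i\cong iS^2$ and $\Id_{\bar\CC}i\cong i\,\Id_\CC$ are natural enough for the axiom-checking in the previous paragraph to be honest; this is routine bookkeeping with the coherence of the equivalence $\mathrm{Res}$, and is the only place where care is needed.
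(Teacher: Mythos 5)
Your proposal is correct and follows essentially the same route as the paper: extend $S$, $\eta$, $\mu$ along the embedding $\CC\to\bar\CC$ via the equivalence $Fun(\bar\CC,\bar\CC)\cong Fun(\CC,\bar\CC)$ of Proposition~\ref{predl_property}, and transfer the monad axioms by faithfulness of restriction. You are in fact more careful than the paper (which simply asserts ``clearly, we obtain a monad''), spelling out the identification $\bar S^2 i\cong iS^2$, the axiom check, and the uniqueness argument.
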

\begin{proof}
The functor $S\colon \CC\to\CC$ extends to a functor $\CC\to \bar\CC$ by embedding $\CC\to\bar\CC$. By Proposition~\ref{predl_property}, the latter functor corresponds to a functor $\bar\CC\to\bar\CC$ which we denote by $\bar S$. Further, a morphism of functors $\mu\colon S^2\to S\colon \CC\to\CC$ gives a morphism of their extensions to functors $\CC\to\bar\CC$. Again, by Proposition~\ref{predl_property} this morphism corresponds to a morphism $\bar\mu\colon \bar S^2\to\bar S$. Likewise we get a morphism $\bar\eta$. Clearly, we obtain a monad $(\bar S,\bar \eta,\bar \mu)$ on $\bar\CC$.
\end{proof}

\section{Preliminaries on group actions and equivariant objects}
\label{section_group}

Let $\CC$ be a pre-additive category, linear over a ring $\k$. Let $G$ be a finite group, suppose that $|G|$ is invertible in $\k$.

\begin{definition}
A \emph{(right) action} of $G$ on $\CC$ is defined by the following data:
\begin{itemize}
\item family of autoequivalences $\phi_g\colon \CC\to \CC, g\in G$;
\item family of isomorphisms $\e_{g,h}\colon \phi_g\phi_h\to \phi_{hg}$, 
for which all diagrams 
$$\xymatrix{\phi_f\phi_g\phi_h \ar[rr]^{\e_{g,h}} \ar[d]^{\e_{f,g}} && \phi_f\phi_{hg}\ar[d]^{\e_{f,gh}}\\
\phi_{gf}\phi_h\ar[rr]^{\e_{gf,h}} && \phi_{hgf} 
}$$
are commutative.
\end{itemize}
\end{definition}

\begin{remark}
We do not require $\phi_e$ to be the identity functor, but the definition implies they are naturally isomorphic. Indeed, we have an isomorphism of functors $\e_{e,e}\colon \phi_e\phi_e\to \phi_{e}$, and since $\phi_e$ is fully faithful, we get an isomorphism 
$$\phi_e^{-1}(\e_{e,e})\colon \phi_e\to\Id.$$  
Denote the inverse isomorphism $\Id\to \phi_e$ by $u$.
\end{remark}

\begin{remark}
It follows from cocycle condition for $\e$ that 
$$\phi_{e,g}=u^{-1}(\phi_g)\colon \phi_e\phi_g\to \phi_g, \quad 
\phi_{g,e}=\phi_g(u^{-1})\colon \phi_g\phi_e\to \phi_g.$$
In particular,
$$\e_{e,e}=\phi_e(u^{-1})=u^{-1}(\phi_e).$$
\end{remark}

\begin{example}
\label{example_action}
Suppose a group $G$ acts on a scheme $X$. Then $\phi_g=g^*\colon \coh(X)\to \coh(X)$
and canonical isomorphisms $g^*h^*\to (hg)^*$ define an action of $G$ on the category $\coh(X)$.
\end{example}

Suppose $G$ acts on a category $\CC$.
\begin{definition}
\label{def_C^G}
A \emph{$G$-equivariant} object in $\CC$ is a pair $(F,(\th_g)_{g\in G})$ where $F\in \Ob\CC$ and $(\th_g)_{g\in G}$ is a family of isomorphisms 
$$\th_g\colon F\to \phi_g(F),$$
such that all diagrams 
$$\xymatrix{
F\ar[rr]^{\th_g} \ar[d]^{\th_{hg}}&& \phi_g(F)\ar[d]^{\phi_g(\th_h)} \\
\phi_{hg}(F) && \phi_g(\phi_h(F)).\ar[ll]_{\e_{g,h}}
}$$
are commutative.
A \emph{morphism} of $G$-equivariant objects from $(F_1,(\th^1_g))$ to $(F_2,(\th^2_g))$ is a morphism $f\colon F_1\ra F_2$ compatible with $\theta_g$, i.e. such $f$ that
the below diagrams commute for all $g\in G$
$$\xymatrix{
F_1\ar[rr]^{\th^1_g} \ar[d]^f&& \phi_g(F_1)\ar[d]^{\phi_g(f)} \\
F_2\ar[rr]^{\th^2_g} && \phi_g(F_2).
}$$
The category of $G$-equivariant objects in $\CC$ is denoted $\CC^G$.
\end{definition}

\begin{remark}
It follows from the definition that ``$\th_e$ is identity''. More precisely, $\th_e\colon F\to \phi_e(F)$ equals $u(F)$.
\end{remark}

\begin{example}
In Example~\ref{example_action}, $G$-equivariant objects are $G$-equivariant coherent sheaves on $X$. 
\end{example}

Define the functor $p^*\colon \CC^G\to \CC$ as the forgetful functor: $p^*(F,(\th_g))=F$.

Define  the functor $p_*\colon \CC\to \CC^G$ as follows:
$$p_*(F)=\left(\bigoplus_{h\in G} \phi_h(F),(\xi_g)\right),$$
where 
$$\xi_g\colon \oplus_{h\in G} \phi_h(F)\to \oplus_{h\in G}\phi_g\phi_h(F)$$
is the collection of isomorphisms
$$\e_{g,h}^{-1}\colon \phi_{hg}(F)\to \phi_g\phi_h(F).$$

\begin{lemma}
\label{lemma_biadjunction}
The functor $p^*$ is both left and right adjoint to the functor $p_*$.
\end{lemma}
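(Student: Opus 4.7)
The plan is to establish each adjunction by unravelling what an equivariant morphism into or out of $p_*F$ looks like and observing that it is forced to be determined by a single morphism in $\CC$. Concretely, for $E=(E,(\th_g))\in\CC^G$ and $F\in\CC$, I will produce mutually inverse natural bijections
$$\Hom_{\CC^G}(E,p_*F)\cong\Hom_{\CC}(p^*E,F)\quad\text{and}\quad\Hom_{\CC^G}(p_*F,E)\cong\Hom_{\CC}(F,p^*E).$$

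For the first, I decompose a morphism $\alpha\colon E\to p_*F$ in $\CC^G$ as a family $(\alpha_h\colon E\to\phi_h(F))_{h\in G}$. Writing out the equivariance square with $\th_g$ on the source and $\xi_g$ on the target, and using that the component of $\xi_g$ landing in the $h$-th summand of the target is $\e_{g,h}^{-1}\colon\phi_{hg}(F)\to\phi_g\phi_h(F)$, the compatibility reduces to the single recursion
$$\alpha_{hg}=\e_{g,h}\circ\phi_g(\alpha_h)\circ\th_g\qquad(g,h\in G).$$
Specializing $h=e$ and using $\e_{g,e}=\phi_g(u^{-1})$ shows that $\alpha$ is determined by $\tilde\alpha:=u^{-1}\circ\alpha_e\colon E\to F$ via $\alpha_g=\phi_g(\tilde\alpha)\circ\th_g$. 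Conversely, for any $\tilde\alpha\in\Hom_\CC(p^*E,F)$, I set $\alpha_g:=\phi_g(\tilde\alpha)\circ\th_g$ and check that the displayed relation holds; this uses the naturality of $\e_{g,h}\colon\phi_g\phi_h\to\phi_{hg}$ together with the equivariance hexagon $\th_{hg}=\e_{g,h}\circ\phi_g(\th_h)\circ\th_g$ for $E$. The two assignments are then manifestly mutually inverse and natural in both arguments, yielding $p^*\dashv p_*$.

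The second bijection is entirely parallel: a morphism $\beta\colon p_*F\to E$ in $\CC^G$ decomposes as $(\beta_h\colon\phi_h(F)\to E)$ with equivariance reading $\phi_g(\beta_h)\circ\e_{g,h}^{-1}=\th_g\circ\beta_{hg}$, which forces $\beta_g=\th_g^{-1}\circ\phi_g(\tilde\beta)$ for $\tilde\beta:=\beta_e\circ u\colon F\to E$. So $p_*\dashv p^*$ is obtained by a symmetric construction.

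The only nonformal step is the component-wise unravelling of the equivariance condition: it is precisely the definition of $\xi_g$ as $\bigoplus_h\e_{g,h}^{-1}$ that converts a priori $|G|$-many constraints into a single recursion determined by $\alpha_e$ (respectively $\beta_e$). Once this is in hand, verifying that the assignment $\tilde\alpha\mapsto(\phi_g(\tilde\alpha)\circ\th_g)_g$ yields an equivariant morphism, and that the two assignments are mutually inverse and natural, is a routine consequence of the cocycle axioms for $\e$ and the equivariance hexagon for $\th$.
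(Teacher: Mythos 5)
Your proof is correct. You establish the two adjunctions by exhibiting the hom-set bijections $\Hom_{\CC^G}(E,p_*F)\cong\Hom_{\CC}(p^*E,F)$ and $\Hom_{\CC^G}(p_*F,E)\cong\Hom_{\CC}(F,p^*E)$ directly, whereas the paper writes down the unit and counit of each adjunction ($\eta(\FF)=\sum_h\th_h$, $\e(F)=u^{-1}(F)\pr_e$, and their primed counterparts) and asserts that the triangle identities hold. The two routes are equivalent: under your first bijection the image of $\id_{p^*E}$ (resp.\ the preimage of $\id_{p_*F}$) is exactly the paper's unit $\sum_h\th_h$ (resp.\ counit $u^{-1}\pr_e$), and similarly for the second. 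What your version buys is that the componentwise unravelling of equivariance against $\xi_g=\oplus_h\e_{g,h}^{-1}$, the reduction to the recursion $\alpha_{hg}=\e_{g,h}\circ\phi_g(\alpha_h)\circ\th_g$, and the identification $\e_{g,e}=\phi_g(u^{-1})$ make explicit precisely the verification that the paper leaves to the reader with ``one can check that these two morphisms satisfy necessary relations.'' The only points you defer --- naturality in both arguments and the check that the two assignments are mutually inverse (which uses $\th_e=u(E)$ and naturality of $u$) --- are indeed routine, so there is no gap.
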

\begin{proof}
First check that $p^*$ is left adjoint to $p_*$. 
Construct the unit morphism
$\eta\colon \Id\to p_*p^*$ of endofunctors on $\CC^G$: for 
$\FF=(F,(\th_g))\in \CC^G$ take
$$\eta(\FF)=\sum_h \th_h\colon \FF\to p_*p^*\FF=(\oplus_h \phi_h(F),(\xi_g)).$$
Construct the counit morphism
$\e\colon p^*p_*\to\Id$ of endofunctors on $\CC$: for 
$F\in \CC$ take
$$\e(F)=u^{-1}(F)\pr_e\colon \oplus \phi_h(F)\to \phi_e(F)\to F.$$
One can check that these two morphisms	 satisfy necessary relations and hence the functors $p^*$ and $p_*$ are adjoint.

Likewise, to check that $p_*$ is left adjoint to $p^*$, we construct adjunction morphisms. 
Construct the unit 
$\eta'\colon \Id_{\CC}\to p^*p_*$: for 
$F\in \CC$ take
$$\eta'(F)\colon F\to p^*p_*F=\oplus_h \phi_h(F)$$ to be the composition of $u(F)\colon F\to \phi_e(F)$ and the embedding of the summand $\phi_e(F)$.

Construct the counit 
$\e'\colon p_*p^*\to\Id_{\CC^G}$: for 
$\FF=(F,(\th_g))\in \CC^G$ take
$$\e'(\FF)=\oplus_h \th_h^{-1}\colon p_*p^*\FF=(\oplus_h \phi_h(F),(\xi_g))\to \FF.$$
These two morphisms of functors satisfy certain relations, therefore $p_*$ is left adjoint to~$p^*$.
\end{proof}

Following Definitions~\ref{def_tpp} and \ref{def_spp}, one may consider 
\begin{itemize}
\item the comonad $\TT(p^*,p_*)$ on $\CC$;
\item the monad $\SS(p^*,p_*)$ on $\CC^G$;
\item the comonad $\TT(p_*,p^*)$ on $\CC^G$;
\item the monad $\SS(p_*,p^*)$ on $\CC$.
\end{itemize}

\begin{lemma}
\label{lemma_1G}
We have two equalities of natural transformations:
\begin{equation}
\e\circ \eta'=1_{\Id_{\CC}}
\end{equation}
and
\begin{equation}
\e'\circ \eta=|G|\cdot 1_{\Id_{\CC^G}}
\end{equation}
\end{lemma}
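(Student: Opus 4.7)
The plan is to verify both identities by direct computation, unwinding the explicit descriptions of $\eta, \eta', \e, \e'$ given in the proof of Lemma~\ref{lemma_biadjunction}. No clever idea is required; the main issue is just bookkeeping with the direct sum decomposition and the natural isomorphism $u\colon\Id\to\phi_e$.

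For the first identity, I would fix $F\in\CC$ and trace through the definitions. The morphism $\eta'(F)\colon F\to \oplus_h\phi_h(F)$ is the inclusion of the $h=e$ summand composed with $u(F)\colon F\to \phi_e(F)$, while $\e(F)\colon \oplus_h\phi_h(F)\to F$ is the projection onto the $h=e$ summand followed by $u^{-1}(F)\colon \phi_e(F)\to F$. Composing, the projection-inclusion pair on the $h=e$ summand gives the identity of $\phi_e(F)$, so we obtain $u^{-1}(F)\circ u(F)=1_F$, which is the first claim.

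For the second identity, I would fix an equivariant object $\FF=(F,(\th_g))\in\CC^G$. The map $\eta(\FF)\colon \FF\to p_*p^*\FF$ has component $\th_h\colon F\to\phi_h(F)$ on the $h$-summand, and $\e'(\FF)\colon p_*p^*\FF\to\FF$ has component $\th_h^{-1}\colon \phi_h(F)\to F$ on the $h$-summand. Since the summands are independent, the composition $\e'(\FF)\circ\eta(\FF)\colon F\to F$ equals $\sum_{h\in G}\th_h^{-1}\circ\th_h=\sum_{h\in G}1_F=|G|\cdot 1_F$, which establishes the second identity. Of course one must check that the resulting equality holds in $\CC^G$, i.e. that $|G|\cdot 1_F$ is the underlying morphism of $|G|\cdot 1_\FF$, but this is immediate since $p^*$ is faithful and $\k$-linear and $|G|$ is invertible.

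The only mild subtlety is the appearance of $u$: one should remark that the component $\th_e$ of any equivariant structure equals $u(F)$ (as noted in the remark after Definition~\ref{def_C^G}), so the $h=e$ term of the second computation is consistent with what one might naively expect. Beyond this, there is no real obstacle; both identities are essentially forced by the definitions of $p^*$, $p_*$ and the two adjunctions.
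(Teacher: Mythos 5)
Your proof is correct and follows exactly the route the paper intends: the paper's own proof of Lemma~\ref{lemma_1G} consists of the single sentence that the identities follow immediately from the explicit formulas in the proof of Lemma~\ref{lemma_biadjunction}, and your computation is precisely that verification spelled out. (The aside about invertibility of $|G|$ is not needed for the second identity --- $|G|\cdot 1_F$ underlies $|G|\cdot 1_{\FF}$ simply because scalar multiples of the identity are automatically compatible with the equivariant structure --- but this does not affect correctness.)
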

\begin{proof}
It follows immediately from explicit formulas, see Proof of Lemma~\ref{lemma_biadjunction}.
\end{proof}

\begin{definition}
\label{def_asscomonad}
The monad $\SS(p_*,p^*)$ and the comonad $\TT(p^*,p_*)$ on $\CC$ will be called \emph{associated with the action} of $G$ on $\CC$. 
\end{definition}
Next proposition shows that modules/comodules over these monad/comonad are precisely $G$-equivariant objects in $\CC$.

\begin{predl}
\label{prop_four}
The comparison functors 
\begin{enumerate}
\item $\CC^G\to \CC_{\TT(p^*,p_*)}$;
\item $\CC^G\to \CC^{\SS(p_*,p^*)}$

are equivalences. 

If, in addition, $\CC$ is idempotent complete, then
the comparison functors
\item $\CC\to (\CC^G)_{\TT(p_*,p^*)}$;
\item $\CC\to (\CC^G)^{\SS(p^*,p_*)}$

also are equivalences.
\end{enumerate}
\end{predl}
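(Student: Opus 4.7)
The strategy is to apply Proposition~\ref{prop_suffcond} (and its monadic dual) in each of the four cases, with the splittings of the adjunction (co)units supplied by Lemma~\ref{lemma_1G} together with the invertibility of $|G|$ in $\k$.

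For (3), the identity $\e\circ\eta'=1_{\Id_\CC}$ of Lemma~\ref{lemma_1G} exhibits $\e$ as a natural left inverse of $\eta'\colon\Id_\CC\to p^*p_*$; combined with the hypothesis that $\CC$ is idempotent complete, Proposition~\ref{prop_suffcond} applied to the adjunction $(p_*,p^*)$ immediately gives the equivalence. Part (4) is the monadic dual: the same identity realizes $\eta'$ as a natural right inverse of the counit $\e\colon p^*p_*\to\Id_\CC$ of the adjunction $(p^*,p_*)$, so the dual form of Proposition~\ref{prop_suffcond} applied to the monad $\SS(p^*,p_*)$ on $\CC^G$ yields the equivalence, again using $\CC$ idempotent complete.

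For (1) and (2), dividing $\e'\circ\eta=|G|\cdot 1_{\Id_{\CC^G}}$ by $|G|$ produces natural splittings $\tfrac{1}{|G|}\e'$ of $\eta$ and $\tfrac{1}{|G|}\eta$ of $\e'$ on $\CC^G$. Since $\CC^G$ is not assumed idempotent complete, the plan is to invoke the second assertion of Proposition~\ref{prop_suffcond}, which requires only objectwise splitting and delivers full faithfulness without any completeness hypothesis, and then to check essential surjectivity by hand. Concretely, for (1) one exhibits an inverse to the comparison functor as follows: a $\TT(p^*,p_*)$-comodule is a datum $(F,h)$ with $h\colon F\to p^*p_*F=\bigoplus_{g\in G}\phi_g F$; set $\th_g:=\pi_g\circ h$, where $\pi_g$ is the projection to the $g$-summand. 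The counit axiom $\e\circ h=1_F$ forces $\th_e=u(F)$, while coassociativity $(Th)\circ h=(\delta F)\circ h$ unfolds summand-by-summand into the cocycle $\phi_g(\th_h)\circ\th_g=\e_{g,h}\circ\th_{hg}$ of Definition~\ref{def_C^G}. Case (2) is formally dual, replacing comodules by modules and reversing the arrow $h$.

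The main obstacle I anticipate is precisely this last unfolding: writing out $\delta=p^*\eta p_*$ componentwise on $\bigoplus_{g,h}\phi_g\phi_h F$ and matching the resulting identifications, mediated by the structural isomorphisms $\e_{g,h}$, with the cocycle equation of Definition~\ref{def_C^G}. Once this identification is established, the remaining verifications reduce to formalities from Lemma~\ref{lemma_1G} and Proposition~\ref{prop_suffcond}.
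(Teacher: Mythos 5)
Your proposal follows essentially the same route as the paper: parts (3) and (4) via Proposition~\ref{prop_suffcond} with the splitting $\e\circ\eta'=1$ from Lemma~\ref{lemma_1G}, and parts (1) and (2) via the full-faithfulness half of Proposition~\ref{prop_suffcond} (using $\tfrac{1}{|G|}\e'$ as the splitting of $\eta$) followed by a direct check of essential surjectivity that unwinds a comodule structure map $h$ into components $\th_g$ and matches coassociativity with the cocycle condition. The only detail worth adding is that Definition~\ref{def_C^G} requires each $\th_g$ to be an isomorphism: the counit axiom makes $\th_e$ invertible, and the cocycle relation then forces all $\th_g$ to be invertible, a step the paper records explicitly.
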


\begin{proof}

\begin{enumerate}
\item 
First, we prove that the comparison functor 
$\Phi\colon \CC^G\to \CC_{\TT(p^*,p_*)}$ is fully faithful.
According to~\ref{prop_suffcond}, we need to check that the unit of adjunction $\eta\colon \Id\to p_*p^*$ is a split embedding (for any object, but in fact the below splitting is functorial). 

Indeed, for any object $\FF\in \CC^G$ the morphism 
$\eta(\FF)\colon \FF\to p_*p^*\FF$
has a left inverse morphism $\frac1{|G|}\e'(\FF)$, see Lemma~\ref{lemma_1G}.

Then we check that $\Phi$ is essentially surjective. Indeed, take an object $(F,h)$ in
$\CC_{\TT(p^*,p_*)}$. Here $h\colon F\to TF=\oplus_h \phi_h(F)$ is a morphism obeying 
\begin{enumerate}
\item $\e (F)\circ h=\id_F$ and 
\item $T h\circ h=\delta (F)\circ h$.
\end{enumerate} 
Components of $h$ are some morphisms $\th_h\colon F\to \phi_h(F)$. Condition (a) imply that 
$\th_e$ is an isomorphism. Condition (b) imply that $\e_{h,g}\phi_h(\th_g)\th_h=\th_{gh}$, and therefore all $\th_h$ are invertible. We get that $(F,(\th_g))$ is an equivariant object and $(F,h)$ is isomorphic to $\Phi((F,(\th_g)))$.

\item is similar to (1).
\item Since $\CC$ is idempotent complete, one can use Proposition~\ref{prop_suffcond}. It suffices to verify that the unit of adjunction 
$\eta'\colon \Id\to p^*p_*$ has a left inverse morphism of functors.
Indeed, $\e$ is left inverse to $\eta'$ by Lemma~\ref{lemma_1G}.
\item is similar to (3).
\end{enumerate}
\end{proof}

\begin{example}
If $\CC$ is not idempotent complete, then comparison functors $(3)$ and $(4)$ from Proposition~\ref{prop_four} need not be equivalences. For example, take the category $\k-\vect$ of finite-dimensional vector spaces over a field $\k$ as $\CC$ and let $\CC_0\subset \CC$ be its subcategory of even-dimensional spaces. Let the group $G=\Z/2\Z$ act trivially on $\CC$. We claim that the comparisom functor 
$$\Phi_0\colon \CC_0\to (\CC_0^G)_{\TT(p^*,p_*)}$$
is not an equivalence.
To see this, consider the commutative diagram of functors
$$\xymatrix{
\CC\ar[rr]^{\Phi}&& (\CC^G)_{\TT(p^*,p_*)} \\
\CC_0\ar[rr]^{\Phi_0}\ar@{^{(}->}[u]^{\s}&& (\CC_0^G)_{\TT(p^*,p_*)}, \ar@{^{(}->}[u]^{\~\s}
}$$
where $\Phi$ and $\Phi_0$ are comparison functors and $\s, \~\s$ denote fully faithful embeddings. Category $\CC$ is idempotent complete, hence by Proposition~\ref{prop_four} $\Phi$ is an equivalence. For $V$ a vector space, $\Phi(V)$ is an object $(\mathcal V,h)$, 
where $\mathcal V=(p_*V,(\xi_g))=(V\oplus V, (\xi_g))\in \CC^G$. We see that $V\oplus V$ is even-dimensional and therefore $\Phi(V)$ belongs to the image of $\~\s$. It follows that $\~\s$ is an equivalence. Since $\s$ is not an equivalence, neither is $\Phi_0$.
\end{example}

We finish this section with
\begin{predl}
\label{predl_extend2}
Let $\CC$ be a category and $\bar\CC$  be an idempotent completion of $\CC$. 
Suppose a group $G$ acts on $\CC$. Then the action on $\CC$ extends uniquely to a $G$-action  on $\bar\CC$.
\end{predl}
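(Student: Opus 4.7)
The plan is to mirror the proof of Proposition \ref{predl_extend1}: construct each piece of the action on $\bar\CC$ by applying Proposition \ref{predl_property}, and then verify the axioms by restriction to $\CC$ together with uniqueness of extensions.

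First I would extend the autoequivalences. For each $g\in G$, let $\iota\colon \CC\to\bar\CC$ denote the canonical embedding. The composition $\iota\phi_g\colon \CC\to\bar\CC$ is a functor into an idempotent complete category, hence by Proposition \ref{predl_property} it extends uniquely (up to canonical isomorphism) to a functor $\bar\phi_g\colon \bar\CC\to\bar\CC$ satisfying $\bar\phi_g\iota\cong \iota\phi_g$. Next, I would check that each $\bar\phi_g$ is an autoequivalence. Since $\phi_g$ is an autoequivalence of $\CC$, it has a quasi-inverse of the form $\phi_{g^{-1}}$ (using the isomorphisms $\e_{g,g^{-1}}$ and $u$ from the action data). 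The compositions $\bar\phi_g\bar\phi_{g^{-1}}$ and $\bar\phi_{g^{-1}}\bar\phi_g$ restrict on $\CC$ (up to isomorphism) to $\iota\phi_g\phi_{g^{-1}}$ and $\iota\phi_{g^{-1}}\phi_g$, which are isomorphic to $\iota$; by the uniqueness clause of Proposition \ref{predl_property} applied to $\iota\colon \CC\to\bar\CC$, these compositions are isomorphic to $\Id_{\bar\CC}$.

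Next I would extend the coherence isomorphisms. For each pair $g,h\in G$, both $\bar\phi_g\bar\phi_h$ and $\bar\phi_{hg}$ are functors $\bar\CC\to\bar\CC$ whose restrictions along $\iota$ are isomorphic to $\iota\phi_g\phi_h$ and $\iota\phi_{hg}$ respectively. Hence by Proposition \ref{predl_property} natural transformations $\bar\phi_g\bar\phi_h\to \bar\phi_{hg}$ correspond bijectively to natural transformations $\iota\phi_g\phi_h\to \iota\phi_{hg}$, which since $\iota$ is fully faithful correspond to natural transformations $\phi_g\phi_h\to\phi_{hg}$. Under this correspondence, $\e_{g,h}$ extends to a unique $\bar\e_{g,h}\colon \bar\phi_g\bar\phi_h\to \bar\phi_{hg}$; it is an isomorphism because its restriction is one and the comparison in Proposition \ref{predl_property} preserves invertibility.

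It remains to verify the cocycle condition and uniqueness. Both triangles of natural transformations in the cocycle diagram from $\bar\phi_f\bar\phi_g\bar\phi_h$ to $\bar\phi_{hgf}$ become, after restricting along $\iota$, the original cocycle diagram for $\e$, which commutes. Applying the bijection of Proposition \ref{predl_property} (to transformations between the fixed extensions $\bar\phi_f\bar\phi_g\bar\phi_h$ and $\bar\phi_{hgf}$), we conclude that both paths in the $\bar\e$-diagram agree on $\bar\CC$. The uniqueness of the whole extension then follows because each ingredient ($\bar\phi_g$, $\bar\e_{g,h}$, and the induced $u$) was produced by the uniqueness clause of Proposition \ref{predl_property}. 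I expect the main technical point to be the bookkeeping in Step~2, namely tracking the canonical isomorphisms $\bar\phi_g\iota\cong \iota\phi_g$ carefully enough that the comparison of $\bar\phi_g\bar\phi_h$ with the extension of $\iota\phi_g\phi_h$ is genuinely the one provided by Proposition \ref{predl_property}; once this identification is made, everything else is forced.
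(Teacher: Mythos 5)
Your proposal is correct and follows exactly the route the paper intends: the paper skips this proof, saying only that it is analogous to Proposition~\ref{predl_extend1}, i.e.\ one extends the functors $\phi_g$ and the transformations $\e_{g,h}$ via the equivalence $Fun(\bar\CC,\DD)\cong Fun(\CC,\DD)$ of Proposition~\ref{predl_property} and checks the axioms by restriction, which is precisely what you do. Your added verifications that the $\bar\phi_g$ remain autoequivalences and the $\bar\e_{g,h}$ remain isomorphisms are details the paper leaves implicit but are handled correctly.
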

The proof is analogous to the proof of Proposition~\ref{predl_extend1}, we skip it.

\section{Reversion for descent categories}
\label{section_reversion}

In this section we apply techniques from Section~\ref{section_group} to get a nice result: the relation ``to be a quotient category modulo finite abelian group action'' on the class of idempotent complete additive categories is symmetric. This result is not needed in the subsequent sections.

Assume that $\CC$ is an additive category and $G$ is a finite group acting on $\CC$. 
Define a comonad on the category $\CC^G$. 
Let $\k[G]$ be the regular representation of $G$ with the basis $e_g, g\in G$.
Take $R\colon \CC^G\to \CC^G$ to be the tensoring by the regular representation:
$$R((F,(\th_g)))=\k[G]\otimes (F,(\th_g)).$$
Define a morphism of functors  $\e_R\colon R\to \Id$ via the morphism of representations
$\k[G]\to \k$ such that $e_g\mapsto 1$. Define a morphism of functors $\delta_R\colon R\to RR$ via the morphism of representations
$\k[G]\to \k[G]\otimes\k[G]$ such that $e_g\mapsto e_g\otimes e_g$.
Clearly, we obtain a comonad $(R,\e_R,\delta_R)$, denote it by~$\RR$. 
\begin{predl}
\label{prop_TTRR}
Comonad  $\TT(p_*,p^*)$  is isomorphic to $\RR$.
\end{predl}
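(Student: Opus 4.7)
The plan is to construct an explicit natural isomorphism $\alpha\colon\RR\to\TT(p_*,p^*)$ of endofunctors on $\CC^G$ and then verify that it intertwines the counits and the comultiplications of the two comonads.

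For $\FF=(F,(\th_g))\in\CC^G$, both functors have as underlying object a direct sum of $|G|$ copies indexed by $G$. The object $\RR\FF=\k[G]\otimes\FF$ has underlying $\bigoplus_{g\in G}F$ (one copy per basis vector $e_g$); combining the regular action of $G$ on $\k[G]$ with $\th_h$, its equivariance $\rho_h$ sends the summand labelled $g$ into the summand labelled $hg$ via $\th_h$. The object $\TT(p_*,p^*)\FF=p_*p^*\FF$ has underlying $\bigoplus_{h\in G}\phi_h(F)$, and the equivariance $\xi_g$ defined before Lemma~\ref{lemma_biadjunction} sends the summand labelled $k$ into the summand labelled $kg^{-1}$ via $\e_{g,kg^{-1}}^{-1}$.

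I define $\alpha_\FF$ componentwise by sending the $g$-th copy of $F$ in $\bigoplus_gF$ via $\th_{g^{-1}}\colon F\to\phi_{g^{-1}}(F)$ into the summand labelled $g^{-1}$ of $\bigoplus_h\phi_h(F)$. Since each $\th_{g^{-1}}$ is an isomorphism and $g\mapsto g^{-1}$ is a bijection of $G$, $\alpha_\FF$ is an isomorphism of underlying objects and is visibly natural in $\FF$. The verification that $\alpha_\FF$ respects equivariance reduces, after matching summand indices ($\rho_h$ sends $g$ to $hg$, $\xi_h$ sends $g^{-1}$ to $g^{-1}h^{-1}=(hg)^{-1}$), to the cocycle identity $\phi_a(\th_b)\circ\th_a=\e_{a,b}^{-1}\circ\th_{ba}$ from Definition~\ref{def_C^G}, applied with $a=h$ and $b=(hg)^{-1}$.

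Compatibility with the counits is then immediate: $\e_R(\FF)$ is the identity on each summand of $\bigoplus_gF$, while $\e'(\FF)=\oplus_h\th_h^{-1}$ by the proof of Lemma~\ref{lemma_biadjunction}, so $\e'\circ\alpha_\FF$ has $g$-th component $\th_{g^{-1}}^{-1}\circ\th_{g^{-1}}=1_F$, matching $\e_R$. For the comultiplications, I would expand both $\delta\circ\alpha$ and $(\alpha*\alpha)\circ\delta_R$ using the formula $\delta=p_*\eta'p^*$ (with $\eta'(F)=u(F)$ followed by the inclusion of the $e$-summand, from Lemma~\ref{lemma_biadjunction}) together with the Godement formula $\alpha*\alpha=\TT(\alpha)\circ\alpha_{\RR(-)}$, and show that both compositions send the $g$-th summand of $\bigoplus_gF$ into the summand $(g^{-1},e)$ of $\bigoplus_{k,h}\phi_k\phi_h(F)$ via $\phi_{g^{-1}}(u(F))\circ\th_{g^{-1}}$.

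The main obstacle is the combinatorial bookkeeping in this last verification. When $(\alpha*\alpha)\circ\delta_R$ is expanded on the diagonal summand $(g,g)$ of $\bigoplus_{g_1,g_2}F$ one obtains the map $\phi_{g^{-1}}(\th_e)\circ\th_{g^{-1}}$, which coincides with $\phi_{g^{-1}}(u(F))\circ\th_{g^{-1}}$ obtained from $\delta\circ\alpha$ thanks to the identifications $\th_e=u(F)$ and $\e_{g,e}=\phi_g(u^{-1})$ noted in the two remarks after the definition of a group action. All remaining checks are natural-transformation bookkeeping forced by the explicit formulas.
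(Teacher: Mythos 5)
Your proof is correct and follows essentially the same route as the paper: your $\alpha_\FF$ is exactly the inverse of the isomorphism $\b(\FF)\colon p_*p^*\FF\to\k[G]\otimes\FF$ (given on the summand $\phi_h(F)$ by $\th_h^{-1}$) that the paper constructs, and the equivariance check reduces to the same cocycle identity. The only difference is that you carry out the compatibility with the counits and comultiplications explicitly, whereas the paper leaves that verification to the reader.
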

\begin{proof}
Define an isomorphism of endofunctors $\b\colon p_*p^*\to \k[G]\otimes -$.
For an object $\FF=(F,(\th_g))$ we take an isomorphism 
$$\b(\FF)\colon p_*p^*\FF = (\oplus_h \phi_h(F),(\xi_g))\to \k[G]\otimes \FF$$
which on the summand $\phi_h(F)$ is 
$$\th_h^{-1}\colon \phi_h(F)\to e_{h^{-1}}\otimes F.$$
Let us check that $\b(\FF)$ defines a morphism in $\CC^G$.
It follows from the diagram
$$\xymatrix{
\phi_h(F) \ar[rr]^{\th_h^{-1}} \ar[d]^{\xi_g} && e_{h^{-1}}\otimes  F\ar[d]^{\th_g}\\
\phi_g\phi_{hg^{-1}}(F) \ar[rr]^{\phi_g(\th_{hg^{-1}}^{-1})} && e_{gh^{-1}}\otimes \phi_g(F),
}$$
which is commutative by the definition of an equivariant object.

It remains to check that $\b$ is compatible with $\e$-s and  $\delta$-s, we leave it to the reader.
\end{proof}

From now on we suppose that the group $G$ is abelian and $\k$ is an algebraically closed field.
Let $G^{\vee}=\Hom(G,\k^*)$ be the dual group to $G$, that is, the group of characters of $G$. Define an action of $G^{\vee}$ on the category $\CC^G$ by twisting: for $\chi\in G^{\vee}$ let
$$\phi_{\chi}((F,(\th_h)))=(F,(\th_h))\otimes \chi=(F,(\th_h\cdot \chi(h))).$$
For $\chi,\psi\in G^{\vee}$ the equivariant objects $\phi_{\chi}(\phi_{\psi}((F,(\th_h))))$
and $\phi_{\psi\chi}((F,(\th_h)))$ are the same, let isomorphisms
$$\e_{\chi,\psi}\colon \phi_{\chi}\circ\phi_{\psi}\to \phi_{\psi\chi}$$
be identities.

\begin{theorem}
\label{th_isogeny}
Let $\k$ be an algebraically closed field, $G$ be a finite abelian group such that $\har(\k)$ does not divide $|G|$. Suppose $\CC$ is a $\k$-linear additive idempotent complete category and $G$ acts on $\CC$.

Then
$$(\CC^G)^{G^{\vee}}\cong \CC.$$
\end{theorem}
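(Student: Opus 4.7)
The plan is to chain together results already established in the paper: Proposition~\ref{prop_four}\,(3) lets us identify $\CC$ with the category of comodules $(\CC^G)_{\TT(p_*,p^*)}$, while Proposition~\ref{prop_four}\,(1), applied to the $G^{\vee}$-action on $\CC^G$, identifies $(\CC^G)^{G^{\vee}}$ with the category of comodules over the comonad $\TT(q^*,q_*)$ associated to that action, where $q^*\colon (\CC^G)^{G^{\vee}}\to \CC^G$ and $q_*\colon \CC^G \to (\CC^G)^{G^{\vee}}$ are the forgetful/induction pair for the $G^{\vee}$-action. Hence it suffices to construct an isomorphism of comonads $\TT(p_*,p^*) \cong \TT(q^*,q_*)$ on $\CC^G$, and then the desired equivalence $\CC \cong (\CC^G)^{G^{\vee}}$ will follow.

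To do this I would pass through the comonad $\RR = (R,\e_R,\delta_R)$ of Proposition~\ref{prop_TTRR}, which already gives $\TT(p_*,p^*) \cong \RR$. The task is therefore reduced to showing $\RR \cong \TT(q^*,q_*)$. By definition of the $G^{\vee}$-action by twisting, the underlying endofunctor of $\TT(q^*,q_*)$ sends $(F,(\th_h))$ to $\bigoplus_{\chi \in G^{\vee}} \phi_{\chi}(F,(\th_h)) = \bigoplus_{\chi} (F,(\chi(h)\th_h))$. On the other hand, since $\k$ is algebraically closed and $\har(\k) \nmid |G|$, the regular representation of the abelian group $G$ decomposes as $\k[G] \cong \bigoplus_{\chi \in G^{\vee}} \k_{\chi}$. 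Tensoring $(F,(\th_h))$ with this decomposition gives $R(F,(\th_h)) \cong \bigoplus_{\chi}(F,(\chi(h)\th_h))$. This already matches the underlying endofunctors; the only remaining point is that the obvious isomorphism is compatible with the structure morphisms.

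Concretely, under the isomorphism $\k[G] \cong \bigoplus_{\chi} \k_{\chi}$ with basis dual to $\{e_g\}$, the counit $\k[G]\to \k$, $e_g\mapsto 1$, becomes (on each summand $\k_{\chi}$) the map arising from the $G^{\vee}$-adjunction counit in $\TT(q^*,q_*)$; similarly the comultiplication $e_g\mapsto e_g\otimes e_g$ corresponds to the diagonal arising from $q^*\eta q_*$. Verifying this is a representation-theoretic computation using the orthogonality of characters (the inverse matrix is $\k_{\chi}\ni 1 \mapsto \frac1{|G|}\sum_g \chi(g^{-1})e_g$), and after this identification the diagrams of Definition~\ref{def_comonad} match on the nose. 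This gives the isomorphism of comonads $\RR \cong \TT(q^*,q_*)$, and combining the three equivalences
$$\CC \cong (\CC^G)_{\TT(p_*,p^*)} \cong (\CC^G)_{\RR} \cong (\CC^G)_{\TT(q^*,q_*)} \cong (\CC^G)^{G^{\vee}}$$
yields the theorem.

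The main obstacle is the last identification $\RR \cong \TT(q^*,q_*)$: it is the one place where the hypotheses on $\k$ and $|G|$ are actually used, since they underpin the decomposition $\k[G] \cong \bigoplus_{\chi} \k_{\chi}$. I do not anticipate trouble with the rest, as the applications of Proposition~\ref{prop_four} require only that $\CC$ be idempotent complete and that $\CC^G$ inherit this property — the latter being a standard consequence of equivariantizing (projectors in $\CC^G$ split because their images, viewed in $\CC$ via $p^*$, split, and the splittings then carry a canonical equivariant structure).
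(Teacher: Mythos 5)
Your proposal is correct and follows essentially the same route as the paper: both identify $(\CC^G)^{G^{\vee}}$ and $\CC$ with comodule categories via Proposition~\ref{prop_four}, and both reduce the theorem to an isomorphism of comonads $\TT(q^*,q_*)\cong\RR\cong\TT(p_*,p^*)$ on $\CC^G$ using Proposition~\ref{prop_TTRR} together with the decomposition $\k[G]\cong\bigoplus_{\chi\in G^{\vee}}\chi$ of the regular representation. The only cosmetic difference is that the paper verifies compatibility of the comparison isomorphism with the counit and comultiplication by evaluating on the element $\oplus 1$ after normalizing $\g(\oplus 1)=e_e$, whereas you phrase the same check via character orthogonality; also note that the idempotent completeness of $\CC^G$ you mention is not actually needed, since part~(1) of Proposition~\ref{prop_four} holds without it.
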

\begin{proof}
Consider the adjoint functors $q^*\colon (\CC^G)^{G^{\vee}}\to \CC^G$ and
$q_*\colon \CC^G\to (\CC^G)^{G^{\vee}}$ (see Section~\ref{section_group} for the definition). We claim that the comonad $\TT(q^*,q_*)$ on $\CC^G$
is isomorphic to the comonad $\RR$.

Indeed, the endofunctor $q^*q_*\colon \CC^G\to\CC^G$ is isomorphic to $R=\k[G]\otimes -$:
$$q^*q_*(\FF)=\oplus_{\chi\in G^{\vee}} (\chi\otimes \FF)\cong \k[G]\otimes \FF$$
since $\oplus_{\chi\in G^{\vee}} \chi\cong \k[G]$ as  a representation.
Fix an isomorphism $\g\colon \oplus_{\chi\in G^{\vee}} \chi\to \k[G]$ such that $\g(\oplus 1)=e_e$, we also denote by $\g$ the corresponding isomorphism $q^*q_*\to R$.

To check that $\g$ is compatible with counit and comultiplication in $\TT(q^*,q_*)$ 
and $\RR$, 
consider the diagrams of representations 
$$\xymatrix{\oplus_{\chi\in G^{\vee}}\chi \ar[rr]^\g \ar[d]^{\pr_{\chi_0}} && \k[G]\ar[d]^{\e_R}\\
\k\ar@{=}[rr] && \k,
}
\qquad
\xymatrix{\oplus_{\chi\in G^{\vee}}\chi \ar[rr]^\g \ar[d]^{\eta} && \k[G]\ar[d]^{\delta_R}\\
(\oplus_{\chi\in G^{\vee}}\chi)\otimes (\oplus_{\chi\in G^{\vee}}\chi ) \ar[rr]^{\g\otimes \g} && \k[G]\otimes \k[G]
}
$$
(here $\eta$ on the summand $\chi$ is a direct sum of identity maps to $\oplus_{\chi_1\chi_2=\chi} \chi_1\otimes \chi_2$).
It suffices to prove that both diagrams are commutative on the element $\oplus 1\in \oplus\chi$, which is true.

Therefore, one has:
$$(\CC^G)^{G^{\vee}}\cong (\CC^G)_{\TT(q^*,q_*)}\cong (\CC^G)_{\RR}\cong (\CC^G)_{\TT(p_*,p^*)}\cong \CC,$$
where the first and the fourth equivalences are due to Proposition~{\ref{prop_four}}, and the third one is by Proposition~\ref{prop_TTRR}.
\end{proof}

As an immediate corollary we get

\begin{theorem}
Let $\BB$ and $\CC$ be idempotent complete additive categories over an algebraically closed  field $\k$, suppose  $\har(\k)$ does not divide $|G|$. Suppose $\BB\cong \CC^G$ for some action of a finite abelian group $G$ on $\CC$. Then $\CC\cong \BB^{G^{\vee}}$ for some action of $G^{\vee}$ on $\BB$.
\end{theorem}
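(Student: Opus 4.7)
My plan is to deduce the statement directly from Theorem~\ref{th_isogeny} by transporting the canonical $G^{\vee}$-action on $\CC^G$ across the given equivalence $\BB \cong \CC^G$.

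First I would fix an equivalence $F\colon \BB \xra{\sim} \CC^G$ together with a quasi-inverse $F^{-1}\colon \CC^G \xra{\sim} \BB$. Recall from Section~\ref{section_reversion} that the dual group $G^{\vee} = \Hom(G,\k^*)$ acts on $\CC^G$ by the twisting autoequivalences $\phi_{\chi}\colon (F,(\th_h)) \mapsto (F,(\th_h \cdot \chi(h)))$, with the coherence isomorphisms $\e_{\chi,\psi}$ taken to be identities. Using $F$ I transport this action to $\BB$: define $\psi_{\chi} = F^{-1}\phi_{\chi}F\colon \BB \to \BB$, and take the coherence isomorphisms to be the obvious ones induced by the identities $\e_{\chi,\psi}$ (composed with the unit/counit of the adjunction between $F$ and $F^{-1}$). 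This yields a bona fide action of $G^{\vee}$ on $\BB$, since the coherence pentagon is preserved by conjugation with an equivalence.

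Next I would observe that equivalences of categories preserve the formation of equivariant categories: if $F\colon \CC_1 \xra{\sim} \CC_2$ intertwines two group actions, then it induces an equivalence $F^H\colon \CC_1^H \xra{\sim} \CC_2^H$ on equivariant objects, given on objects by sending $(X,(\th_h))$ to $(FX, (F(\th_h)))$ modified by the intertwining isomorphisms. By construction, $F$ intertwines the $G^{\vee}$-action on $\BB$ with the $G^{\vee}$-action on $\CC^G$, so we obtain an equivalence
\[
\BB^{G^{\vee}} \xra{\sim} (\CC^G)^{G^{\vee}}.
\]
Finally, Theorem~\ref{th_isogeny} (whose hypotheses are in force: $\CC$ is $\k$-linear additive idempotent complete, $G$ is finite abelian, $\k$ algebraically closed, $\har(\k) \nmid |G|$) gives $(\CC^G)^{G^{\vee}} \cong \CC$. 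Composing, $\BB^{G^{\vee}} \cong \CC$, as required.

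There is no real obstacle: the result is essentially a formal consequence of Theorem~\ref{th_isogeny} together with the elementary principle that equivariantization is functorial under equivalences. The only point demanding a moment's care is the verification that transporting the $G^{\vee}$-action along $F$ yields a well-defined action (i.e.\ that the cocycle condition on $\e$ is preserved), but this is automatic once one writes the transported isomorphisms using the unit/counit of $F \dashv F^{-1}$.
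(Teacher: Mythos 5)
Your proof is correct and follows the same route as the paper, which states this theorem as an immediate corollary of Theorem~\ref{th_isogeny} without further argument; you have simply made explicit the transport of the $G^{\vee}$-action along the equivalence $\BB\cong\CC^G$ and the resulting equivalence $\BB^{G^{\vee}}\cong(\CC^G)^{G^{\vee}}$, which is exactly the routine verification the paper leaves to the reader.
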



\begin{example}
Suppose $G$ is a finite abelian group.
Let $X$ be an algebraic $G$-variety over~$\k$, let $\CC=\D^b(\coh(X))$. Then
$\CC^G\cong \D^b(\coh^G(X))$ (see Theorem~\ref{th_descentforequiv_intro}), the group  $G^{\vee}$ acts on $\D^b(\coh^G(X))$ by twisting into characters of $G$. By Theorem~\ref{th_isogeny}, we have $$\D^b(\coh^G(X))^{G^{\vee}}\cong \D^b(\coh(X)).$$
\end{example}

The following special case is more geometric.

\begin{example}
Suppose $X$ is a Galois covering of an algebraic variety $Y$ with the abelian Galois group $G$. Then one has $\coh(Y)\cong \coh^G(X)$. By Theorem~\ref{th_descentforequiv_intro}, the same holds for derived categories:
$$\D^b(\coh(Y))\cong\D^b(\coh^G(X))\cong \D^b(\coh(X))^G.$$	
By Theorem~\ref{th_isogeny}, we get that 
\begin{equation}
\label{eq_reversion}
\D^b(\coh(X))\cong \D^b(\coh(Y))^{G^{\vee}}
\end{equation}
for some action of $G^{\vee}$ on $\D^b(\coh(Y))$.
Note that this action can be described explicitly. Indeed, under the equivalence $\coh^G(X)\cong \coh(Y)$ equivariant line bundles $\O_X\otimes \chi$, $\chi\in G^{\vee}$, on~$X$
correspond to some line bundles $\LL_{\chi}$ on~$Y$. The group  $G^{\vee}$ acts on $\D^b(\coh(Y))$ by tensoring in $\LL_{\chi}$. 
One can show that the relative spectrum $\Spec_Y(\oplus_{\chi\in G^{\vee}}\LL_{\chi}^{-1})$ is isomorphic to $X$. Therefore 
equivalence (\ref{eq_reversion}) also follows from Theorem~\ref{th_descentforcoverings_intro}.
\end{example}

\section{Preliminaries on triangulated and DG-categories}
\label{section_trcat}

In this section we collect definitions related to suspended and triangulated categories. Some of them are standard and can be found anywhere, others (concerning compatibility of functors and natural transformations with suspension) are more specific. We refer to a paper by P.\,Balmer, M.\,Schlichting~\cite{Ba}. See also Neeman's book~\cite{Ne}.
We expand here the definition of a triangulated category in order to make the further discussion of higher triangulated categories more consistent. Also we recall necessary facts about DG-categories and their homotopy categories.

We will adopt the following 
\begin{definition}
A \emph{suspended} category is an additive category $\CC$ with an automorphism functor (called \emph{suspension} or \emph{shift})
\end{definition}
We will denote shift functor by $X\mapsto X[1]\colon \CC\to\CC$. 

\begin{definition}
A functor $F$ between suspended categories $\CC_1$ and $\CC_2$ is called \emph{stable} if
it commutes with shifts: i.e., functors $F(-)[1]$ and $F(-[1])$ are equal.

A morphism of stable functors $\varphi\colon F\to H$ between suspended categories $\CC_1$ and $\CC_2$ is called \emph{stable} if  
it commutes with shifts: i.e., for any object $X\in\CC_1$ the diagram
$$\xymatrix{F(X)[1]\ar[rr]^{\varphi_X[1]}\ar@{=}[d] && H(X)[1]\ar@{=}[d]\\
F(X[1])\ar[rr]^{\varphi_{X[1]}} && H(X[1])
}$$
commutes.
\end{definition}

\begin{definition}
A \emph{triangle} in a suspended category is a diagram of the form
$$X\to Y\to Z\to X[1].$$
The morphism $X\to Y$ is called a \emph{base} of the triangle.
\end{definition}

\begin{definition}
An \emph{octahedron} in a suspended category is a commutative diagram of the form
\begin{equation}
\label{eq_oct}
\xymatrix{
X\ar[r]^f \ar@{=}[d] & Y\ar[r]^{x}\ar[d]^g & Z'\ar[r]\ar[d] & X[1]\ar@{=}[d]\\
X\ar[r] & Z\ar[r]\ar[d] & 	Y'\ar[r] \ar[d] & X[1]\ar[d]^{f[1]}\\
& X'\ar@{=}[r]\ar[d]^r & X'\ar[r]^r\ar[d] &Y[1]\\
& Y[1]\ar[r]^{x[1]} & Z'[1].& 
}
\end{equation}
A pair of morphisms $X\xra{f} Y\xra{g} Z$ is called a \emph{base} of the octahedron.
\end{definition}

\begin{definition}
\label{def_trcat}
A \emph{triangulated} category is a suspended category $\CC$ with a class of triangles  called \emph{distinguished triangles} satisfying the following axioms:
\begin{enumerate}
\item[(1a)] Any triangle isomorphic to a distinguished one is distinguished.
\item[(1b)] For any object $X$ in $\CC$ the triangle $0\to X\xra{1_X} X\to 0$ is distinguished.
\item[(1c)] A triangle $X\xra{f} Y\xra{g} Z\xra{h} X[1]$ is distinguished if and only if 
the triangle $Y\xra{g} Z\xra{h} X[1]\xra{-f[1]} Y[1]$ is distinguished.
\item[(2)] Any morphism $f\colon X\to Y$ fits into some distinguished triangle
$X\xra{f} Y\to Z\to X[1]$.
\item[(3)] For any two distinguished triangles, any morphism of their bases extends to a morphism of triangles.  That is, for any morphisms $u,v$ making the left square in  
$$
\xymatrix{
X\ar[r]^f\ar[d]^u & Y\ar[r]^g\ar[d]^v& Z\ar[r]^h\ar@{-->}[d]^w & X[1]\ar[d]^{u[1]}\\
X'\ar[r]^{f'}& Y'\ar[r]^{g'} & Z\ar[r]^{h'} & X[1]
}
$$
commutative there exists a morphism $w$ completing the diagram.
\item[(4)] Any pair of morphisms $X\xra{f} Y\xra{g} Z$ fits into an octahedron (\ref{eq_oct}) 
whose first two rows and two central columns are distinguished triangles. Such octahedra are called \emph{distinguished}.
\end{enumerate}
\end{definition}

\begin{definition}
A functor  between triangulated categories (or more generally between suspended categories with a class of distinguished triangles) is called \emph{exact} if
it is stable and preserves distinguished triangles.
\end{definition}

\begin{definition}
A monad $\SS=(S,\eta,\mu)$ on a triangulated category (or more generally on a suspended category with a class of distinguished triangles) is called \emph{exact} if
$S$ is an exact functor and $\eta,\mu$ are stable natural transformations.
\end{definition}

\medskip
We refer to \cite{BLL}, \cite{Ke} or \cite{Ke2} for the definitions and basic facts concerning DG-categories.

Let $\k$ be a field. A \emph{DG-category} is a $\k$-linear category such that all $\Hom$ spaces are differential complexes of $\k$-vector spaces and composition of morphisms satisfies graded Leibniz rule.
For a DG-category $\AA$, by $H^0(\AA)$ the \emph{homotopic} category of $\AA$ is denoted. This is a category with the same objects as $\AA$ and whose $\Hom$ spaces are zero homology of $\Hom$ spaces in $\AA$. An additive functor $\Phi\colon \AA\to\BB$ between two DG-categories is a \emph{DG-functor} if for any $X,Y\in \AA$ the induced morphism $\Hom_{\AA}(X,Y)\to \Hom_{\BB}(\Phi(X),\Phi(Y))$ is a morphism of complexes. For any DG-functor $\Phi\colon \AA\to\BB$ one has an induced functor on homotopy categories $H^0(\Phi)\colon H^0(\AA)\to H^0(\BB)$. A DG-functor  $\Phi$ is said to be \emph{quasi-equivalence} if for any $X,Y\in \AA$ the morphism $\Hom_{\AA}(X,Y)\to \Hom_{\BB}(\Phi(X),\Phi(Y))$ is a quasi-isomorphism of complexes and $H^0(\Phi)$ is essentially surjective.

Let $C_{DG}(\k)$ denote the DG-category of complexes of $\k$-vector spaces. For a DG-category $\AA$, a (right) \emph{$\AA$-module} is a DG-functor from $\AA^{op}$ to $C_{DG}(\k)$. Denote by $\Mod{-}\AA$ the category of right $\AA$-modules, it is a DG-category. It has shift and cones of closed morphisms of degree zero, its homotopy category $H^0(\Mod{-}\AA)$ is triangulated. 
Yoneda embedding $h\colon \AA\to \Mod{-}\AA$ is a fully faithful functor.  Modules of the form $h^X, X\in\AA$, are called \emph{free}. The minimal full strict subcategory in $\Mod{-}\AA$, containing all free modules and closed under shifts and cones is called \emph{pretriangulated hull} of $\AA$, we denote it $\Pretr(\AA)$. Its homotopy category $H^0(\AA)$ is triangulated. An $\AA$-module 
$M$ is said to be \emph{semi-free} if there exists a filtration $$0=M_0\subset M_1\subset \ldots \subset M$$ of submodules such that $\cup M_i=M$ and $M_i/M_{i-1}$ is isomorphic to a direct sum of shifts of some free modules. An $\AA$-module is \emph{perfect} if it is semi-free and isomorphic in $H^0(\Mod{-}\AA)$ to a direct summand of some module in $\Pretr(\AA)$. DG-category of perfect $\AA$-modules is denoted $\Perf(\AA)$. Its homotopy category $H^0(\Perf(\AA))$ is also triangulated and it is an idempotent closure of $H^0(\Pretr(\AA))$.

If Yoneda embedding $h\colon \AA\to\Pretr(\AA)$ is a quasi-equivalence, then 
$\AA$ is called \emph{pretriangulated}. If $h\colon \AA\to\Pretr(\AA)$ is a DG-equivalence, then $\AA$ is called \emph{strongly pretriangulated}. DG-category $\AA$ is said to be \emph{perfect} if the embedding $\AA\to\Perf{\AA}$ is a quasi-equivalence.
In all three cases the homotopy category $H^0(\AA)$ is triangulated.

\section{Triangulated structure on the quotient category of a triangulated category}
\label{section_balmer}

Let $\TTT$ be a triangulated category with an action of a group $G$ by exact autoequivalences~$\phi_g, g\in G$. The key subject of this paper is triangulated structure on the quotient category~$\TTT^G$. 

\begin{definition}
\label{def_TTGtriang}
Define a shift functor on $\TTT^G$: on objects $(F,(\th_g))[1]=(F[1],(\th_g[1]))$, on morphisms in $\TTT^G$ shift is the same as on morphisms in $\TTT$.
Say that a triangle $$(F_1,(\th^1_g))\xra{\a} (F,(\th_g))\xra{\b} (F_2,(\th^2_g))\xra{\gamma} (F_1,(\th^1_g))[1]$$ in $\TTT^G$
is distinguished if and only if  the triangle
$$F_1\xra{\a} F\xra{\b} F_2\xra{\gamma} F_1[1]$$ is distinguished in $\TTT$.
\end{definition}

Essentially, this definition introduces a triangulated structure on $\TTT^G$.
It follows from results of P.\,Balmer \cite{Ba2}, an overview of which is given below.

\medskip
Let $\TTT$ be a suspended category with a  class of distinguished triangles and $\SS$ an exact monad on $\TTT$. Then one can introduce a shift functor  and a class of distinguished triangles in $\TTT^{\SS}$ like in Definition \ref{def_TTGtriang}:
shift is defined in an obvious way, a triangle in $\TTT^{\SS}$ is called distinguished if and only if its image in $\TTT$ under forgetful functor is distinguished.

To formulate Balmer's results one needs to use a modification of standard Verdier's axioms of a triangulated category, which was proposed by M.\,K\"unzer \cite{Ku}. 
See also G.\,Maltsiniotis~\cite{Ma}.
For any $n\ge 2$ they define
triangulated categories of order~$n$ by introducing distinguished $n$-triangles. We give the definitions only for $n=2$ and $3$. 
\begin{definition}[\cite{Ba2}, Definition 5.11]
It is said that a suspended category $\TTT$ with a class of distinguished triangles \emph{has triangulation of order $2$} if it satisfies axioms (1)--(3) from Definition~\ref{def_trcat}. It is said that  $\TTT$  \emph{has triangulation of order $3$} if $\TTT$ satisfies all axioms of Definition~\ref{def_trcat} plus axiom (5): for any two distinguished octahedra any morphism between their bases extends to a morphism of octahedra.
\end{definition}
\begin{remark}
Note that $2$-triangles are ordinary triangles and $3$-triangles are octahedra.
\end{remark}
\begin{remark}
\label{remark_2t3}
Triangulated of order $3$ $\Longrightarrow$ Triangulated $\Longrightarrow$ Triangulated of order $2$. 
\end{remark}

In fact, all triangulated categories that appear in algebra or geometry are triangulated of any order:
\begin{predl}
\label{prop_model}
A homotopy category of any stable model category is triangulated of any order. In particular, a triangulated category which has a DG-enhancement is triangulated of any order.
\end{predl}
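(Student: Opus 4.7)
The plan is to handle the two assertions in sequence. For the first, I would recall that the homotopy category of any stable model category $\mathcal{M}$ carries a canonical triangulated structure whose distinguished triangles are produced from cofiber sequences $X \to Y \to C_f$ via the functorial factorizations supplied by the model structure; the fact that the shift is invertible is what the adjective ``stable'' buys us. To obtain distinguished $n$-triangles for $n \geq 2$, I would associate to any composable string of $n-1$ morphisms in $\mathcal{M}$ the $n$-triangle built by iterated homotopy cofibers. The higher axioms, which require extending a morphism between the bases of two such $n$-triangles to a morphism of the whole diagram, then reduce to a cascade of lifting problems against (trivial) cofibrations, which are soluble by the model-theoretic axioms. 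This program has been carried out rigorously by K\"unzer \cite{Ku} and Maltsiniotis \cite{Ma}, whose results one cites as a black box.

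For the second assertion, let $\TTT$ be a triangulated category with a DG-enhancement given by a pretriangulated DG-category $\AA$, so that $\TTT \cong H^0(\AA)$. The category $\Mod{-}\AA$ of right DG-modules admits the standard projective model structure, which is stable; its homotopy category is therefore triangulated of any order by the previous step. The Yoneda embedding realizes $\AA$ as a full DG-subcategory of $\Mod{-}\AA$, and on homotopy categories $H^0(\AA)$ becomes a full triangulated subcategory of $H^0(\Mod{-}\AA)$, closed under shifts and cones precisely because $\AA$ is pretriangulated. The higher triangulated structure then restricts to $\TTT$: any composable string of morphisms in $\TTT$ can be completed to a distinguished $n$-triangle inside the ambient homotopy category, and the closure under cones ensures all intermediate vertices remain in $\TTT$, while the extension axioms are inherited from the ambient category.

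The main obstacle is genuinely the first step: a direct verification of the higher-order axioms in the stable model category setting is a substantial piece of work, and my proposal is to defer to the cited references rather than reproduce it. A secondary but necessary check is that the fully faithful triangulated embedding $H^0(\AA) \hookrightarrow H^0(\Mod{-}\AA)$ preserves and reflects distinguished $n$-triangles (in the sense that every distinguished $n$-triangle in $\TTT$ lifts to one in the ambient category with the same vertices, and vice versa); this is immediate from the pretriangulated assumption together with the fact that the embedding is exact.
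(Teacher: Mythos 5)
Your proposal follows essentially the same route as the paper: the first assertion is taken as a black box from the literature (the paper cites Balmer's Remark 5.12 for it, rather than K\"unzer and Maltsiniotis directly), and the second is deduced by embedding $H^0(\AA)$ into the homotopy category of the stable model category of DG-modules over $\AA$ and using that $\AA$ pretriangulated makes the image closed under shifts and cones, so that the higher triangulation restricts.

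One imprecision is worth flagging. The homotopy category of the model category $\Mod{-}\AA$, in the model-categorical sense of localization at the weak equivalences (quasi-isomorphisms), is the derived category $\D(\AA)$, not $H^0(\Mod{-}\AA)$; the two differ, since an acyclic but non-contractible module becomes zero in the former but not in the latter. Hence the first step yields a higher triangulation on $\D(\AA)$, and your claim that $H^0(\Mod{-}\AA)$ is triangulated of any order ``by the previous step'' does not follow as written. The repair is exactly what the paper does: representable modules are semi-free, hence cofibrant (and every object is fibrant in the projective model structure), so the Yoneda embedding induces a fully faithful exact functor $H^0(\AA)\to\D(\AA)$, and one restricts the higher triangulated structure along this embedding rather than along $H^0(\AA)\to H^0(\Mod{-}\AA)$. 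With that substitution your argument coincides with the paper's.
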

\begin{proof}
The first statement is~\cite[Remark 5.12]{Ba2}. For the second, suppose $\AA$ is a pretriangulated DG-category. Then the DG-category $\Mod{-}\AA$  of right DG-modules over $\AA$
admits a stable model structure, see~\cite[Theorem 3.2]{Ke2}. The homotopy category of the model category $\Mod{-}\AA$ is the derived category $\D(\AA)$ of $\AA$, therefore $\D(\AA)$ is triangulated of any order.
Homotopy category $H^0(\AA)$ is a fully faithful subcategory in $\D(\AA)$. Since $\AA$ is pretriangulated, $H^0(\AA)$ is closed under all degrees of shift functor and forming of (ordinary) triangles. It follows that $H^0(\AA)$ is closed under forming of higher triangles, hence $H^0(\AA)$ is also triangulated of any order. 
\end{proof}

Also we recall 
\begin{definition}
A functor $F\colon \CC\to\DD$ between two categories is called \emph{separable} if the natural transformation of functors $$F\colon \Hom_{\CC}(-,-)\to \Hom_{\DD}(F-,F-)$$
from $\CC^{op}\times \CC$ to $\mathcal{S}ets$ has a left inverse natural transformation. Suppose also that $\CC$ and $\DD$ are suspended categories and $F$ is a stable functor. Then $F$ is called \emph{stably separable} if the above natural transformation of functors has a left inverse stable natural transformation.
\end{definition}
In practice, separability of a functor can often be checked using
\begin{lemma}[See {\cite[Remark 3.9]{Ba2}} or Rafael {\cite[Theorem 1.2]{Ra}}]
\label{lemma_sep}
In the above assumptions suppose that $F$ has a left adjoint functor $H$. Then $F$ is separable if the counit morphism $HF\to \Id_{\CC}$ has a right inverse morphism of functors $\Id_{\CC}\to HF$. In the suspended situation, $F$ is stably separable if the counit morphism of functors has a right inverse stable morphism.
\end{lemma}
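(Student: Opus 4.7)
The plan is to unwind the definition of separability using the adjunction $H \dashv F$ and then construct the desired left inverse explicitly from the splitting $\sigma \colon \Id_{\CC} \to HF$ of the counit.

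First I would exploit the adjunction to identify the target $\Hom$-set. Writing $\eta$ and $\e$ for the unit and counit of $H\dashv F$, the adjunction gives a natural isomorphism
\[
\Hom_{\DD}(FX,FY)\;\cong\;\Hom_{\CC}(HFX,Y),\qquad g\longmapsto \e_Y\circ Hg.
\]
Under this identification, the natural transformation $F\colon \Hom_{\CC}(X,Y)\to\Hom_{\DD}(FX,FY)$ becomes, on an element $f\colon X\to Y$, the map $\e_Y\circ HFf$, which by naturality of $\e$ equals $f\circ \e_X$. Thus, modulo the adjunction iso, $F$ is pre-composition with $\e_X$.

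Next I would use the splitting $\sigma\colon \Id_{\CC}\to HF$, with $\e\circ \sigma=\Id$, to produce a candidate left inverse $\rho$. Define
\[
\rho_{X,Y}\colon \Hom_{\DD}(FX,FY)\longrightarrow \Hom_{\CC}(X,Y),\qquad g\longmapsto \e_Y\circ Hg\circ \sigma_X.
\]
The composition $\rho\circ F$ sends $f$ to $\e_Y\circ HFf\circ \sigma_X=f\circ \e_X\circ \sigma_X=f$, so $\rho$ is indeed left inverse to $F$. Naturality of $\rho$ in $(X,Y)\in \CC^{op}\times \CC$ is a routine verification using functoriality of $H$, naturality of $\e$, and naturality of $\sigma$; I would just carry it out by chasing one square.

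For the stable case, the only additional point is that $\rho$ must commute with the shift automorphism. Since $F$ is stable and $H$ is left adjoint to a stable functor between suspended categories, $H$ inherits a canonical stable structure and the counit $\e$ is automatically stable; combined with the assumption that $\sigma$ is stable, the formula $g\mapsto \e_Y\circ Hg\circ \sigma_X$ clearly commutes with $[1]$. The mildly fiddly part of the whole argument is just keeping track of the stability data for $H$ and $\e$ so as to conclude that the explicitly constructed $\rho$ is stable; there is no serious obstacle, since everything reduces to the identity $\e\circ \sigma=\Id$ and naturality.
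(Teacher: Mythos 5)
The paper offers no proof of this lemma at all --- it simply cites Balmer's Remark~3.9 and Rafael's Theorem~1.2 --- and your argument is exactly the standard proof from those sources: identify $\Hom_{\DD}(FX,FY)\cong\Hom_{\CC}(HFX,Y)$ via the adjunction, observe that $F$ becomes precomposition with $\e_X$, and define the retraction by $g\mapsto \e_Y\circ Hg\circ\sigma_X$. The computation $\rho(Ff)=f\circ\e_X\circ\sigma_X=f$, the naturality check, and the remark that stability of $\sigma$ (together with the induced stable structure on $H$ and $\e$) yields stable separability are all correct, so the proposal is a complete and correct proof in essentially the intended spirit.
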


Now we can formulate
\begin{theorem}[{\cite[Theorem 5.17]{Ba2}}]
\label{th_balmer}
Let $\TTT$ be a suspended idempotent complete category with a triangulation of order $2$ or $3$.
Let $\SS$ be an exact monad on $\TTT$. Suppose the forgetful functor $\TTT^{\SS}\to \TTT$ is stably separable. 
Then the category $\TTT^{\SS}$ has a triangulation of order $2$ or $3$ respectively such that a triangle in $\TTT^{\SS}$ is distinguished if and only if its image in $\TTT$ is distinguished.
\end{theorem}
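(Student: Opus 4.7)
The plan is to verify the axioms of a triangulated category of order $2$ (respectively $3$) on $\TTT^{\SS}$ in the K\"unzer--Maltsiniotis formulation, with shift inherited componentwise and distinguished triangles defined by pulling back distinguished triangles along the forgetful functor $U\colon \TTT^{\SS} \to \TTT$. I would organize the argument in three stages: install the shift and dispose of the formal axioms; turn stable separability into a concrete splitting; use that splitting to lift fill-ins from $\TTT$ to $\TTT^{\SS}$.

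Because $\SS$ is exact (so $S$ commutes strictly with $[1]$ and $\eta, \mu$ are stable), the assignment $(F, h) \mapsto (F[1], h[1])$ is well-defined and yields an automorphism of $\TTT^{\SS}$. Axioms (1a)--(1c) of Definition~\ref{def_trcat} then follow tautologically from their counterparts in $\TTT$, since both shift and the class of distinguished triangles are transported across $U$. Next, stable separability of $U$, combined with Lemma~\ref{lemma_sep}, gives a stable natural transformation $\s\colon \Id_{\TTT^{\SS}} \to F_{\SS} U$ which is right inverse to the counit $\e\colon F_{\SS} U \to \Id_{\TTT^{\SS}}$, where $F_{\SS}\colon \TTT \to \TTT^{\SS}$ is the free module functor $F_{\SS}(X) = (SX, \mu X)$. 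Since $\TTT$ is idempotent complete, so is $\TTT^{\SS}$, and $\s$ realizes every module $M$ as a canonical shift-compatible retract of the free module $F_{\SS}(UM)$, whose triangulated behaviour is controlled by that of $\TTT$ through $F_{\SS}$.

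For axiom (2), given a module morphism $\a\colon (F_1, h_1) \to (F_2, h_2)$, I would complete $U\a$ to a distinguished triangle $F_1 \to F_2 \to Z \to F_1[1]$ in $\TTT$. Applying $S$ produces a second distinguished triangle, and since $\a$ is a module morphism the left square commutes; axiom (3) in $\TTT$ therefore yields a map $k\colon SZ \to Z$. This $k$ a priori fails to endow $Z$ with a module structure, so the remedy is to observe that $F_{\SS}(Z)$ carries a tautological structure and that $Z$ is cut out as a canonical retract of $F_{\SS}(UZ)$ via $\s$; transferring the free structure through this retraction defines a genuine $h\colon SZ \to Z$ with $(Z,h)$ a module, giving the desired distinguished triangle in $\TTT^{\SS}$. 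Axioms (3), (4), and, in the order-$3$ case, the morphism-of-octahedra axiom (5), follow by exactly the same pattern: take a candidate fill-in given by the corresponding axiom in $\TTT$, and correct it by averaging against $\s$ so that it becomes a morphism of modules.

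The main obstacle is precisely this correction step. The module equations $h \circ \eta Z = 1_Z$ and $h \circ \mu Z = h \circ Sh$ do not follow from axiom (3) in $\TTT$ alone, and one cannot hope to choose $k$ itself to satisfy them without additional input; this is exactly where stable separability and idempotent completeness of $\TTT$ enter essentially, as they together certify that $Z$ inherits a canonical module structure from $F_{\SS}(UZ)$ via the splitting $\s$. Once this single technical point is established, the remaining verifications reduce to formal diagram chases in $\TTT$, applied degree by degree to retain compatibility with module structures.
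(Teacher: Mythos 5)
First, a point of orientation: the paper does not prove this statement at all; it is imported verbatim from Balmer \cite[Theorem 5.17]{Ba2}, so there is no internal proof to compare against. That said, your overall strategy\dash every module is a natural, shift-compatible retract of the free module $F_{\SS}(UM)$, and fill-ins coming from $\TTT$ are corrected by conjugation with the splitting\dash is indeed the skeleton of Balmer's argument. Your treatment of the fill-in axioms (3) and (5) is essentially right: given a candidate fill-in $w$ in $\TTT$ between the underlying triangles of two distinguished triangles of $\TTT^{\SS}$ with third vertices $Z,Z'$, the corrected map $c_{Z'}\circ F_{\SS}(w)\circ\s_{Z}$ (where $c\colon F_{\SS}U\to\Id_{\TTT^{\SS}}$ is the counit and $\s$ its stable right inverse) is a module morphism and still completes the diagram, by naturality and stability of $c$ and $\s$. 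This is also exactly where the K\"unzer--Maltsiniotis order enters: to build octahedra in $\TTT^{\SS}$ one must extend morphisms of octahedra, i.e.\ invoke axiom (5) in $\TTT$, which is why order $3$ is assumed in that case.

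The gap is in axiom (2), and it is a genuine one. Your ``remedy''\dash that ``$Z$ is cut out as a canonical retract of $F_{\SS}(UZ)$ via $\s$''\dash is circular: $\s$ is a natural transformation $\Id_{\TTT^{\SS}}\to F_{\SS}U$ defined only on modules, whereas at this stage $Z$ is merely an object of $\TTT$ (a cone of $U\a$) carrying no module structure. There is no canonical retraction $SZ\to Z$ of $\eta_Z$ until such a structure exists; separability splits $\mu$, not $\eta$. The correct route is to work with the morphism rather than the object: $\a$ is a direct summand of $F_{\SS}(U\a)$ in the arrow category of $\TTT^{\SS}$, and $F_{\SS}(U\a)$ does have a cone in $\TTT^{\SS}$, namely $F_{\SS}$ applied to a cone of $U\a$, which is distinguished because $S$ is exact. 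One then extends the splitting idempotent of the arrow to an endomorphism $\~e$ of this free distinguished triangle using axiom (3) of $\TTT^{\SS}$ (which must therefore be established \emph{before} axiom (2), not after), corrects $\~e$ to an honest idempotent\dash a fill-in of an idempotent need not be idempotent; this is the Balmer--Schlichting step where one checks $(\~e^{2}-\~e)^{2}=0$ and replaces $\~e$ by $3\~e^{2}-2\~e^{3}$\dash and finally splits it using idempotent completeness of $\TTT^{\SS}$, inherited from that of $\TTT$. This simultaneously produces the module structure on the cone \emph{and} guarantees that the two remaining maps of the triangle are module morphisms, a point your sketch never addresses. Without this step the proof does not close.
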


From this we deduce the following 
\begin{theorem}
\label{th_TTTGtriang}
Let $\TTT$ be a suspended idempotent complete category linear over a ring~$\k$ with a triangulation of order $2$ or $3$. Suppose a finite group $G$ acts on $\TTT$ by exact autoequivalences and $|G|$ is invertible in $\k$. Then Definition~\ref{def_TTGtriang} makes $\TTT^G$ a triangulated category of order $2$ or $3$ respectively. 
\end{theorem}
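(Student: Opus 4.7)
The plan is to apply Balmer's Theorem~\ref{th_balmer} through the identification $\TTT^G \cong \TTT^{\SS}$ furnished by Proposition~\ref{prop_four}(2), where $\SS=\SS(p_*,p^*)$. To this end I would proceed in three steps.

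First, I verify that the monad $\SS$ on $\TTT$ is exact. The endofunctor $S=p^*p_*\colon \TTT\to\TTT$ sends $F$ to the finite direct sum $\bigoplus_{h\in G}\phi_h(F)$, which is exact since each $\phi_g$ is exact (so $p_*$ is exact, and $p^*$ is exact by construction). The structural transformations $\eta'\colon \Id_{\TTT}\to S$ and $\mu=p^*\e'p_*\colon S^2\to S$ are stable because they are assembled from the adjunction data of the exact pair $(p_*,p^*)$; in particular, the direct-sum embedding defining $\eta'$ and the evaluation at the identity component defining $\e'$ commute with shift.

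The central step is to show that the forgetful functor $p^*\colon \TTT^G\to \TTT$ is stably separable; under the equivalence $\Phi$ of Proposition~\ref{prop_four}(2) this corresponds to the forgetful functor $Q_*\colon \TTT^{\SS}\to \TTT$, which is the separability hypothesis of Theorem~\ref{th_balmer}. This is the place where invertibility of $|G|$ intervenes. The counit of the adjunction $p_*\dashv p^*$ is $\e'\colon p_*p^*\to \Id_{\TTT^G}$, and Lemma~\ref{lemma_1G} asserts that $\e'\circ \eta = |G|\cdot 1_{\Id_{\TTT^G}}$, where $\eta\colon \Id_{\TTT^G}\to p_*p^*$ is the unit of the other adjunction $p^*\dashv p_*$. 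Hence $\tfrac{1}{|G|}\eta$ is a right inverse to $\e'$, and it is stable since both $\eta$ and scalar multiplication commute with shift. Lemma~\ref{lemma_sep} then yields stable separability of $p^*$.

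With these ingredients in place, Theorem~\ref{th_balmer} applies to the exact monad $\SS$ on the idempotent complete suspended category $\TTT$ of triangulation order $n\in\{2,3\}$ and equips $\TTT^{\SS}$ with a triangulation of the same order whose distinguished triangles are precisely those whose image in $\TTT$ is distinguished. Since the comparison equivalence $\Phi\colon \TTT^G\to \TTT^{\SS}$ satisfies $Q_*\Phi\cong p^*$ and intertwines shifts, transporting the structure along $\Phi$ produces exactly the shift and class of distinguished triangles of Definition~\ref{def_TTGtriang}. The only non-routine point is the stable separability verification, which, however, becomes immediate once the two adjunctions between $p^*$ and $p_*$ from Lemma~\ref{lemma_biadjunction} are combined via Lemma~\ref{lemma_1G}.
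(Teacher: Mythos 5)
Your proposal is correct and follows essentially the same route as the paper's proof: identify $\TTT^G$ with $\TTT^{\SS}$ for $\SS=\SS(p_*,p^*)$ via Proposition~\ref{prop_four}, check that $\SS$ is exact, verify stable separability of $p^*$ by producing the right inverse $\tfrac{1}{|G|}\eta$ to $\e'$ using Lemma~\ref{lemma_1G} and Lemma~\ref{lemma_sep}, and then apply Theorem~\ref{th_balmer}. No gaps.
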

\begin{proof}
Consider adjoint functors $p^*\colon \TTT^G\to\TTT$ and $p_*\colon \TTT\to\TTT^G$ defined in Section~\ref{section_group}. Let $\SS=\SS(p_*,p^*)$ be the monad on $\TTT$ defined by the adjoint pair $p_*,p^*$. By Proposition~\ref{prop_four}, we have an equivalence $\TTT^G\to\TTT^{\SS}$ which makes a commutative diagram 
$$\xymatrix{\TTT^G\ar[rr]^{\sim}\ar[rd]_{p^*}&& \TTT^{\SS}\ar[ld]^{Q^*}\\
&\TTT.&\\}
$$ 
We define shift functors and distinguished triangles in $\TTT^G$ and $\TTT^{\SS}$ as explained above. Clearly, equivalence $\TTT^G\to\TTT^{\SS}$ is exact. Therefore it is enough to check that $\TTT^{\SS}$ is triangulated (of order $2$ or $3$).
Since adjunction morphisms for $p_*$ and $p^*$ are stable, the monad $\SS$ is exact. 
We apply Theorem~\ref{th_balmer}. It suffices to check that the functor $Q^*\colon \TTT^{\SS}\to \TTT$ is stably separable, or equivalently that $p^*\colon \TTT^G\to\TTT$ is stably separable.
By Lemma~\ref{lemma_sep}, we need to check that the counit morphism of functors $\epsilon'\colon p_*p^*\to\Id_{\TTT^G}$ has a right inverse.  That is true: by Lemma~\ref{lemma_1G} the right inverse morphism is given by 
$$\frac1{|G|}\eta\colon \Id_{\TTT^G}\to p_*p^*.$$ 
Explicit construction of $\eta$ (see Proof of Lemma~\ref{lemma_biadjunction}) implies that the above morphism is stable, so Theorem~\ref{th_balmer} applies.
\end{proof}


\begin{corollary}
\label{corollary_TTTGtriang}
Let $\TTT$ be a triangulated category linear over a ring $\k$. Let a finite group~$G$ act on $\TTT$ by exact autoequivalences, assume $|G|$ is invertible in $\k$. 
Suppose $\TTT$ has a DG-enhancement~$\AA$. Then Definition~\ref{def_TTGtriang} makes $\TTT^G$ a triangulated category. 
\end{corollary}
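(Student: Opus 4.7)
The plan is to deduce the corollary from Theorem~\ref{th_TTTGtriang}. The only hypothesis of that theorem not directly given is idempotent completeness of $\TTT$; the triangulation-of-order-$3$ property (which by Remark~\ref{remark_2t3} implies the classical triangulated axioms) comes for free from Proposition~\ref{prop_model}, since $\TTT$ admits a DG-enhancement.

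If $\TTT$ is itself idempotent complete, then I would simply apply Theorem~\ref{th_TTTGtriang} to conclude that $\TTT^G$ is triangulated of order~$3$, hence triangulated. In general, I would first pass to the idempotent completion $\bar\TTT$. Assuming, as one may, that the DG-enhancement $\AA$ is pretriangulated, the category $\Perf(\AA)$ is a DG-enhancement of $\bar\TTT$, so by Proposition~\ref{prop_model} the category $\bar\TTT$ is triangulated of order~$3$. By Proposition~\ref{predl_extend2} the $G$-action extends uniquely to $\bar\TTT$, and one checks that the extensions $\bar\phi_g$ remain exact autoequivalences: they commute with shift by construction, and preserve distinguished triangles because triangles in $\bar\TTT$ are, up to isomorphism, direct summands of triangles in $\TTT$. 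Theorem~\ref{th_TTTGtriang} then applies to $\bar\TTT$ and produces a triangulation of order~$3$ on $\bar\TTT^G$ with distinguished triangles given by Definition~\ref{def_TTGtriang}.

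Finally, I would identify $\TTT^G$ with the full subcategory of $\bar\TTT^G$ consisting of equivariant objects whose underlying object lies in $\TTT$. This subcategory is closed under the shift of Definition~\ref{def_TTGtriang}. The key step is closure under cones up to isomorphism: for a morphism $f \colon \FF_1 \to \FF_2$ in $\TTT^G$ form the cone $\FF_3 = (F_3, (\th_g))$ in $\bar\TTT^G$; then $F_3$ is a cone of the underlying morphism in $\bar\TTT$, and since $\TTT$ is a triangulated subcategory of $\bar\TTT$, there exist $F_3' \in \TTT$ and an isomorphism $\a \colon F_3 \to F_3'$ in $\bar\TTT$. Transporting the equivariant structure by $\th'_g = \phi_g(\a) \circ \th_g \circ \a^{-1}$ produces an object $(F_3', (\th'_g)) \in \TTT^G$ isomorphic to $\FF_3$ in $\bar\TTT^G$, so that we may replace $\FF_3$ by an object of $\TTT^G$ while keeping the triangle distinguished. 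From this the Verdier axioms (and the octahedron axiom) descend from $\bar\TTT^G$ to $\TTT^G$.

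The main obstacle is exactly this descent step: verifying that the transported family $(\phi_g(\a) \circ \th_g \circ \a^{-1})$ indeed satisfies the cocycle condition defining an equivariant object, and more generally that cones and octahedra in $\bar\TTT^G$ can be arranged so that every object lands in the image of $\TTT^G$. Modulo this bookkeeping, everything is a direct invocation of Proposition~\ref{prop_model}, Proposition~\ref{predl_extend2}, and Theorem~\ref{th_TTTGtriang}.
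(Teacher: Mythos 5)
Your proposal is correct and follows essentially the same route as the paper: handle the idempotent complete case by combining Proposition~\ref{prop_model} with Theorem~\ref{th_TTTGtriang}, then pass to the idempotent completion $\bar\TTT$ (enhanced by $\Perf(\AA)$), extend the action via Proposition~\ref{predl_extend2}, and exhibit $\TTT^G$ as a triangulated subcategory of $\bar\TTT^G$. The ``bookkeeping'' you flag is genuinely harmless\,---\,transporting $(\th_g)$ along an isomorphism $\a$ via $\phi_g(\a)\circ\th_g\circ\a^{-1}$ satisfies the cocycle condition by naturality of $\e_{g,h}$\,---\,and in fact your write-up supplies more detail at this final step than the paper, which simply asserts that $\TTT^G\subset\bar\TTT^G$ is a triangulated subcategory.
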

\begin{proof}
First, suppose $\TTT$ is idempotent complete.
By Proposition~\ref{prop_model}, category $\TTT$ is triangulated of order $3$. By Theorem~\ref{th_TTTGtriang}, category $\TTT^G$ is also triangulated of order $3$, hence triangulated.

The general case is reduced to the case considered above by passing to idempotent completion. Let $\bar\TTT$ be the idempotent completion   of $\TTT$. We extend $G$-action from $\TTT$ to $\bar\TTT$ as explained in Proposition~\ref{predl_extend2}. Note that the action on $\bar\TTT$ is exact.
Also note that DG-category $\Perf(\AA)$ is a DG-enhancement for $\bar\TTT$, see beginning of Section~\ref{section_dg} for details. Therefore, by the above arguments, category $\bar\TTT^G$ is triangulated with triangles as in Definition~\ref{def_TTGtriang}. One has a commutative diagram of categories and exact functors
$$\xymatrix{\TTT^G\ar[rr]^{p^*}\ar@{^{(}->}[d] && \TTT\ar@{^{(}->}[d]\\
\bar\TTT^G\ar[rr]^{p^*}&& \bar\TTT,
}$$
where $\TTT^G$ is a full subcategory in $\bar\TTT^G$ consisting of objects that are mapped by $p^*$ to objects of $\TTT$. It follows that $\TTT^G\subset \bar\TTT^G$ is a triangulated subcategory.
\end{proof}

\section{Finite group quotients for derived categories}
\label{section_derived}

Suppose $\TTT$ is the bounded derived category of an abelian category $\AA$.
Consider an action of a group $G$ on $\TTT$ induced by a $G$-action on $\AA$. For an abelian category $\AA$ the category $\AA^G$ is also abelian. In this section we demonstrate directly that the category $\TTT^G$ is triangulated by proving that $\TTT^G\cong \D^b(\AA^G)$.

\begin{theorem}
\label{th_AGAG}
Let $\AA$ be an abelian category with an action of a finite group $G$. Suppose~$\AA$ is linear over a ring $\k$ and $|G|$ is invertible in $\k$. Let $\D^b(\AA)$ be its bounded derived category, 
it is equipped
with an action of $G$ in the natural way.  Then one has an equivalence $\D^b(\AA^G)\to \D^b(\AA)^G$.
\end{theorem}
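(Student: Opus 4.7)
The plan is to realize the desired equivalence as a comparison functor arising from the comonad machinery of Section~\ref{section_monads}. The biadjoint pair $p^*\colon \AA^G\to\AA$ (forgetful) and $p_*\colon \AA\to\AA^G$ (averaging) from Lemma~\ref{lemma_biadjunction} consists of exact functors between abelian categories, so both descend to a biadjoint pair between $\D^b(\AA^G)$ and $\D^b(\AA)$. The averaging formula $\frac{1}{|G|}\e'$ of Lemma~\ref{lemma_1G} provides a functorial left inverse to the unit $\eta\colon\Id\to p_*p^*$ on $\AA^G$, and this splitting persists verbatim after derivation.

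Each autoequivalence $\phi_g$ of $\AA$ is exact, so the $G$-action on $\AA$ induces an exact $G$-action on $\D^b(\AA)$, to which the constructions of Section~\ref{section_group} apply. I would then show that the comonad associated with this derived $G$-action (Definition~\ref{def_asscomonad}) coincides with the comonad $\TT(p^*,p_*)$ on $\D^b(\AA)$ coming from the derived biadjoint pair of the first paragraph. Both have underlying endofunctor $\bigoplus_{g\in G}\phi_g$, and their counits and comultiplications are given by the same explicit formulas visible from the proof of Lemma~\ref{lemma_biadjunction}.

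Granted this identification, the comparison theorem (Proposition~\ref{th_comparison}) combined with Proposition~\ref{prop_four}(1) applied to the $G$-action on $\D^b(\AA)$ yields a functor
$$\Phi\colon \D^b(\AA^G)\longrightarrow \D^b(\AA)_{\TT(p^*,p_*)}\cong \D^b(\AA)^G.$$
To conclude that $\Phi$ is an equivalence I would invoke Proposition~\ref{prop_suffcond}: the unit $\eta$ is a split monomorphism by the first paragraph, and $\D^b(\AA^G)$ is idempotent complete (classical for bounded derived categories of abelian categories).

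The main obstacle I expect is the bookkeeping in the comonad identification of paragraph two. The underlying endofunctors agree for transparent reasons, but one must carefully match the units, counits, and comultiplications under the canonical isomorphism $p^*p_*(X)\cong\bigoplus_g\phi_g(X)$. A secondary concern is the idempotent completeness of $\D^b(\AA^G)$; if one preferred to bypass it, essential surjectivity of $\Phi$ could instead be verified directly, using the splitting of $\eta$ to exhibit an arbitrary $\TT$-comodule as a retract of one lying in the image of $\Phi$.
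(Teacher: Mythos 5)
Your proposal is correct and follows essentially the same route as the paper: derive the exact biadjoint pair $p^*,p_*$ termwise, identify the resulting comonad on $\D^b(\AA)$ with the one coming from the induced $G$-action (the paper calls this identification tautological), and apply Proposition~\ref{prop_suffcond} using the $\frac{1}{|G|}$-averaging splitting of the unit together with idempotent completeness of $\D^b(\AA^G)$ (which the paper justifies by citing Balmer--Schlichting) and Proposition~\ref{prop_four}. No gaps.
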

\begin{proof}
First of all, we note that $\D^b(\AA^G)$ is idempotent complete by~\cite[Corollary 2.10]{Ba}.

Consider the functors $p^*\colon \AA^G\to\AA$ and $p_*\colon \AA\to \AA^G$ introduced in Section~\ref{section_group}. Since they are exact, there exist derived functors $Rp^*\colon \D^b(\AA^G)\to\D^b(\AA)$ and $Rp_*\colon \D^b(\AA)\to \D^b(\AA^G)$ which can be defined termwise. 
Also consider the adjoint functors 
$q^*\colon \D^b(\AA)^G\to \D^b(\AA)$ and $q_*\colon \D^b(\AA)\to \D^b(\AA)^G$, see Section~\ref{section_group}.

Adjoint pairs $Rp^*,Rp_*$ and $q^*,q_*$ define two comonads on $\D^b(\AA)$, which are tautologically isomorphic.

Use Proposition~\ref{prop_suffcond} to check that the comparison functor
$$\D^b(\AA^G)\to \D^b(\AA)_{\TT(Rp^*,Rp_*)}$$
is an equivalence. We need to check that the 
canonical morphism of functors $\Id\to Rp_*Rp^*$ on $\D^b(\AA^G)$ is a split embedding.
Indeed, for any $$\FF^{\bul}=[\ldots\to (F^i,(\th^i_g))\to (F^{i+1},(\th^{i+1}_g))\to \ldots]\in\D^b(\AA^G)$$
 the morphism of complexes
$$\FF^{\bul}\to Rp_*Rp^*\FF^{\bul}$$
given by the family 
$$\oplus_h \th^i_h\colon (F^i,(\th^i_g))\to (\oplus_{h\in G} \phi_h(F^i),(\xi^i_g)),$$ 
has a left inverse morphism
$$Rp_*Rp^*\FF^{\bul}\to \FF^{\bul}$$
given by the family 
$$\frac1{|G|}\oplus_h (\th^i_h)^{-1}\colon (\oplus_{h\in G} \phi_h(F^i),(\xi^i_g))\to (F^i,(\th^i_g)).$$
Clearly, this splitting is functorial.

We obtain a series of equivalences
$$\D^b(\AA^G)\to \D^b(\AA)_{\TT(Rp^*,Rp_*)}=\D^b(\AA)_{\TT(q^*,q_*)}\cong  \D^b(\AA)^G$$
where the latter equivalence is due to Proposition~\ref{prop_four}.
\end{proof}
\begin{remark}
The only reason why we need the derived category to be bounded is to check its idempotent completeness by using~\cite[Corollary 2.10]{Ba} of Balmer and Schlichting. Theorem~\ref{th_AGAG} also holds for unbounded, left or right-bounded derived category of $\AA$ provided that this derived category  is idempotent complete.
\end{remark}

As corollaries, we obtain theorems from the introduction.

\begin{theorem}
\label{th_descentforequiv}
Let $G$ be a finite group and $X$ be a quasi-projective $G$-variety over a field~$\k$. 
Suppose $\har(\k)$ does not divide $|G|$. Then
$$\D^b(\coh(X))^G\cong \D^b(\coh^G(X)).$$
Informally, ``passing to equivariant category commutes with passing to derived category''.
\end{theorem}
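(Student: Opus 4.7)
The plan is to deduce Theorem~\ref{th_descentforequiv} directly from Theorem~\ref{th_AGAG} by taking $\AA = \coh(X)$. The hypothesis that $\har(\k)$ does not divide $|G|$ guarantees that $|G|$ is invertible in $\k$, and $\coh(X)$ is a $\k$-linear abelian category, so Theorem~\ref{th_AGAG} applies once we identify the relevant action and equivariant category.

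First, I would describe the $G$-action on $\coh(X)$. The action of $G$ on $X$ gives automorphisms $g \colon X \to X$ for every $g \in G$, with multiplication isomorphisms $(hg)^* \cong g^*h^*$ given by the canonical identifications of pullbacks, as in Example~\ref{example_action}. Since each $g^*$ is an exact autoequivalence of $\coh(X)$ (here one uses that $X$ is a variety and $g$ is an automorphism), these functors assemble into an action of $G$ on the abelian category $\coh(X)$ in the sense of Section~\ref{section_group}, and the induced action on $\D^b(\coh(X))$ by derived pullbacks coincides with the natural one appearing in the statement.

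Second, I would check the tautological identification $\coh(X)^G \cong \coh^G(X)$. Unwinding Definition~\ref{def_C^G} in this setting, an object of $\coh(X)^G$ is a coherent sheaf $F$ together with isomorphisms $\theta_g \colon F \to g^*F$ for each $g \in G$, subject to the cocycle condition with respect to the canonical isomorphisms $g^*h^*F \cong (hg)^*F$; this is exactly the standard definition of a $G$-equivariant coherent sheaf on $X$, and morphisms match as well. Hence $\coh(X)^G$ is equivalent to $\coh^G(X)$.

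Finally, applying Theorem~\ref{th_AGAG} to $\AA = \coh(X)$ yields
\[
\D^b(\coh^G(X)) \cong \D^b(\coh(X)^G) \cong \D^b(\coh(X))^G,
\]
which is the claim. The one mild technical point is the idempotent completeness of $\D^b(\coh^G(X))$ required for Theorem~\ref{th_AGAG}, but this is covered by the cited result of Balmer--Schlichting \cite[Corollary 2.10]{Ba}, which was already invoked in the proof of Theorem~\ref{th_AGAG}. No serious obstacle arises; the work is really front-loaded into Theorem~\ref{th_AGAG}, and here quasi-projectivity of $X$ only enters implicitly to ensure that $\coh(X)$ and $\coh^G(X)$ behave as expected (in particular, that pullback along $g$ preserves coherence).
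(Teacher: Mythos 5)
Your proposal is correct and follows exactly the paper's own route: the paper also takes $\AA=\coh(X)$, notes the tautological identification $\AA^G\cong\coh^G(X)$, and applies Theorem~\ref{th_AGAG}. Your additional remarks (unwinding the action, the cocycle condition, and the idempotent completeness of $\D^b(\coh^G(X))$ via Balmer--Schlichting) are just a more explicit spelling-out of the same argument.
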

\begin{proof}
Take $\AA=\coh(X)$. Then $\AA^G\cong \coh^G(X)$.
By Theorem~\ref{th_AGAG}, we get the result.
\end{proof}

\begin{corollary}
\label{cor_galois}
Suppose $X$ is a Galois covering of a quasi-projective variety $Y$ over a field $\k$ with a Galois group $G$. 
Suppose $\har(\k)$ does not divide $|G|$. Then
$$\D^b(\coh(X))^G\cong \D^b(\coh(Y)).$$
\end{corollary}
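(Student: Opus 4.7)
The plan is to deduce this corollary directly by combining Theorem~\ref{th_descentforequiv} with the classical descent equivalence $\coh^G(X)\cong \coh(Y)$ valid for a Galois cover. Since a Galois cover means that $G$ acts freely on $X$ with quotient $Y$, the quotient morphism $\pi\colon X\to Y$ is finite étale and $G$-invariant, and pullback along $\pi$ induces an equivalence of abelian categories between $\coh(Y)$ and $\coh^G(X)$, with inverse given by taking invariants (i.e.\ by $(\pi_*-)^G$). I would cite this as a standard descent result (going back to Grothendieck; see for instance SGA~1 or Mumford's \emph{Abelian Varieties}). The hypothesis that $\har(\k)$ does not divide $|G|$ is not needed for this step itself, but will be used in the next one.

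Next I apply Theorem~\ref{th_descentforequiv} to the $G$-variety $X$. Since $X$ is quasi-projective and $\har(\k)\nmid |G|$, the theorem gives an equivalence
$$\D^b(\coh(X))^G\cong \D^b(\coh^G(X)).$$
Finally, composing with the equivalence $\D^b(\coh^G(X))\cong \D^b(\coh(Y))$ induced termwise by the abelian equivalence $\coh^G(X)\cong \coh(Y)$ from the previous paragraph yields the desired equivalence
$$\D^b(\coh(X))^G\cong \D^b(\coh(Y)).$$

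There is essentially no obstacle here: the whole content of the corollary lies in the two inputs (Theorem~\ref{th_descentforequiv} and free-action descent for $G$-equivariant sheaves), and the deduction is a one-line concatenation of equivalences. The only point that would deserve a brief remark is that the $G$-action on $\D^b(\coh(X))$ referred to in the statement is the one induced by the given $G$-action on the variety $X$, so that the hypotheses of Theorem~\ref{th_descentforequiv} apply verbatim.
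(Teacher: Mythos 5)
Your proposal is correct and follows exactly the paper's own argument: the paper's proof of this corollary is the one-line combination of Theorem~\ref{th_descentforequiv} with the classical fact that $\coh^G(X)\cong\coh(Y)$ for a Galois cover. Your additional remarks (on where the characteristic hypothesis enters and on the induced $G$-action) are accurate elaborations of the same route.
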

\begin{proof}
It follows from Theorem~\ref{th_descentforequiv} and the well-known fact that $\coh^G(X)\cong\coh(Y)$. 
\end{proof}

\begin{theorem}
\label{th_descentforcoverings}
Let $X$ be a quasi-projective algebraic variety over a field $\k$ and $G\subset \Pic(X)$ be a finite subgroup. Let $G$ act on $\coh(X)$ by tensoring into line bundles of $G$.
Let $$Y=\Spec_X\left(\bigoplus_{\LL\in G}\LL^{-1}\right)$$
be the relative spectrum.
Suppose $\har(\k)$ does not divide $|G|$. Then
$$\D^b(\coh(X))^G\cong \D^b(\coh(Y)).$$
\end{theorem}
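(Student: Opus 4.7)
The plan is to combine Theorem~\ref{th_AGAG} (which has just been established for $\AA=\coh(X)$) with an identification of $\coh(X)^G$ with $\coh(Y)$. Since $\coh(X)$ is an abelian $\k$-linear category and $|G|$ is invertible in $\k$, Theorem~\ref{th_AGAG} gives $\D^b(\coh(X))^G\cong \D^b(\coh(X)^G)$. So the whole statement reduces to the purely abelian equivalence
\begin{equation*}
\coh(X)^G\;\cong\;\coh(Y).
\end{equation*}

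The second step is to identify $\coh(Y)$, via the affine morphism $\pi\colon Y\to X$, with the category of coherent sheaves of modules over the $\O_X$-algebra $\AA=\bigoplus_{\LL\in G}\LL^{-1}$. The multiplication on $\AA$ is given by composing the natural isomorphisms $\LL^{-1}\otimes \LL'^{-1}\xrightarrow{\sim}(\LL\otimes\LL')^{-1}$ with a chosen family of isomorphisms $\LL\otimes\LL'\xrightarrow{\sim}\LL\LL'$ realising the group law of $G\subset\Pic(X)$. One has to fix these isomorphisms so that $\AA$ becomes associative and unital; this is possible because $G$ is an honest subgroup of $\Pic(X)$ (not merely a subgroup up to isomorphism), so we may pick representatives and compatible multiplications yielding a strict $2$-cocycle. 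Once this is done, $\pi_*$ induces an equivalence $\coh(Y)\cong \AA\mmod$.

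The third step, which is the heart of the argument, is to match $\coh(X)^G$ with $\AA\mmod$. An equivariant object $(\FF,(\th_\LL))$ consists of isomorphisms $\th_\LL\colon \FF\to\LL\otimes\FF$ satisfying the cocycle condition from Definition~\ref{def_C^G}; dualising gives maps $m_\LL\colon \LL^{-1}\otimes\FF\to\FF$ whose direct sum is a morphism $m\colon \AA\otimes\FF\to\FF$. The equivariance cocycle for $(\th_\LL)$, together with the chosen isomorphisms $\LL^{-1}\otimes\LL'^{-1}\cong(\LL\LL')^{-1}$ used to define multiplication on $\AA$, becomes exactly the associativity law for $m$, while $\th_{\O_X}=u(\FF)$ recovers the unit axiom. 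Conversely, any $\AA$-module structure on $\FF$ restricts to maps $m_\LL\colon \LL^{-1}\otimes\FF\to\FF$, equivalently $\th_\LL\colon\FF\to\LL\otimes\FF$; associativity together with the presence of $\LL^{-1}\in G$ forces $m_\LL\circ(\id\otimes m_{\LL^{-1}})=\id$ in both orders, so each $\th_\LL$ is automatically an isomorphism. These constructions are inverse to each other on morphisms (both reduce to morphisms in $\coh(X)$ commuting with the relevant structure), so they give an equivalence of categories. Combining with the previous steps yields the desired equivalence.

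The main obstacle I anticipate is the bookkeeping in the third step: one must verify that the structure constants $\e_{\LL,\LL'}$ of the $G$-action on $\coh(X)$ (namely the canonical isomorphisms $\LL\otimes(\LL'\otimes\FF)\cong (\LL\LL')\otimes\FF$ composed with the group-law isomorphisms) are the same $2$-cocycle data as the ones used to define the multiplication on $\AA$. Granted that compatibility, everything else is formal.
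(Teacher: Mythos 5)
Your proposal is correct and follows essentially the same route as the paper: reduce via Theorem~\ref{th_AGAG} to the abelian equivalence $\coh(X)^G\cong\coh(Y)$, identify $\coh(Y)$ with modules over the sheaf of algebras $\bigoplus_{\LL\in G}\LL^{-1}$, and match module structures with equivariant structures, with the cocycle bookkeeping as the only delicate point. The paper handles that bookkeeping concretely by decomposing $G$ into cyclic factors, choosing representative bundles $\LL_i$ and trivializations $t_i\colon\LL_i^{n_i}\to\O_X$, and using these same data both for the structure isomorphisms $\e_{g,h}$ of the action and for the multiplication on the algebra, which is exactly the compatibility you flagged as the main obstacle.
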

\begin{proof}
Since $\Pic(X)$ is a not a set of line bundles, but a set of isomorphism classes of line bundles, certain care should be taken when defining $G$-action on $\coh(X)$. Let us do it in some details.

Clearly, 
$G\cong \langle g_1\rangle\times\ldots\times \langle g_m\rangle$ where $g_i\in\Pic(X)$ are elements of order $n_i$. Choose a line bundle $\LL_i$ on $X$ representing each $g_i$.
Fix isomorphisms $t_i\colon \LL^{n_i}\to \O_X$ for each~$i$. For $g=\prod g_i^{d_i}$, $0\le d_i<n_i$ denote by $\LL(g)$ the bundle $\otimes_i \LL_i^{d_i}$. 
Define an action of $G$ on $\coh(X)$. Let $\phi_g\colon \coh(X)\to\coh(X)$
be $\LL(g)\otimes -$. Isomorphisms $\e_{g,h}\colon \phi_g\phi_h\to\phi_{hg}$ are defined through isomorphisms $\LL(g)\otimes \LL(h)\cong \LL(hg)$ which are tautological or defined via~$t_i$.

Let 
$$\RR=\bigoplus_{g\in G} \LL(g)^{-1}$$
be a sheaf on $X$. With the use of $t_i$, one can introduce multiplication $\RR\otimes_{\O_X}\RR\to\RR$  making $\RR$ a sheaf of $\O_X$-algebras.
Let $Y=\Spec_X\RR$ be the relative spectrum of $\RR$. Coherent sheaves on $Y$ are coherent sheaves of $\RR$-modules on $X$. A coherent sheaf $\FF$ on $X$ is a sheaf of $\RR$-modules if it is equipped with a morphism $a\colon \RR\otimes \FF\to \FF$ compatible with multiplication.
One has 
\begin{multline*}
a\in\Hom(\RR\otimes\FF,\FF)=\prod_{g\in G}\Hom(\LL(g)^{-1}\otimes\FF,\FF)=\\
=\prod_{g\in G}\Hom(\FF,\LL(g)\otimes\FF)=\prod_{g\in G}\Hom(\FF,\phi_g(\FF))\ni (\theta_g).
\end{multline*}
It can be checked that $a$ is compatible with multiplication in $\RR$ iff $(\theta_g)$ is compatible with $\e_{g,h}$ in the sense of Definition~\ref{def_C^G}.
Thus
coherent sheaves of $\RR$-modules correspond to $G$-equivariant coherent sheaves on $X$
with respect to the action introduced above. Therefore 
$$\coh(X)^G\cong \coh(Y).$$
Let $\AA=\coh(X)$, then $\AA^G\cong \coh(Y)$.
By Theorem~\ref{th_AGAG}, we have $$\D^b(\coh(X))^G\cong \D^b(\coh(Y)).$$
\end{proof}

\section{Finite group quotients for enhanced triangulated categories}
\label{section_dg}
Suppose a triangulated category $\TTT$ with a $G$-action has a DG-enhancement:
a pretriangulated DG-category $\AA$ with an equivalence $H^0(\AA)\to \TTT$. 
In this section we address the following question: does equivariant category $\TTT^G$ have any reasonable DG-enhancement? We give the  positive answer assuming that the $G$-action on $\TTT$ is induced by a DG-action of~$G$ on $\AA$. Under that hypothesis we construct a pretriangulated DG-category $Q_G(\AA)$ and an exact equivalence $H^0(Q_G(\AA))\to \TTT^G$. This construction has good functorial properties.

Definitions of a group action and of an equivariant object are to be modified in the case of DG-categories, they are as follows:

\begin{definition}
A \emph{(right) action} of a group $G$ on a DG-category $\AA$ consists of the following data:
\begin{itemize}
\item family of DG-autoequivalences $\phi_g\colon \CC\to \CC, g\in G$;
\item family of closed isomorphisms of degree $0$: $\e_{g,h}\colon \phi_g\phi_h\to \phi_{hg}$, satisfying usual associativity conditions.
\end{itemize}
\end{definition}

\begin{definition}
A \emph{$G$-equivariant} object in a DG-category $\AA$ is a pair $(F,(\th_g)_{g\in G})$ where $F\in \Ob\AA$ and $(\th_g)_{g\in G}$ is a family of closed isomorphisms of degree $0$
$$\th_g\colon F\to \phi_g(F),$$
satisfying usual associativity conditions.
A \emph{morphism} of $G$-equivariant objects from $(F_1,(\th^1_g))$ to $(F_2,(\th^2_g))$ is a morphism $f\colon F_1\ra F_2$ compatible with $\theta_g$.
\end{definition}

\begin{predl}
For an action of a group $G$ on a DG-category $\AA$, the category of equivariant objects 
$\AA^G$ is also a DG-category.
\end{predl}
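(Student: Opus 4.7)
The plan is to endow $\AA^G$ with $\Hom$ complexes by restricting those of $\AA$ to the equivariance condition, and then verify that the differential, composition, and identity morphisms give it a DG-category structure. Given equivariant objects $\FF_i = (F_i, (\theta^i_g))$ for $i=1,2$, I would define $\Hom^n_{\AA^G}(\FF_1, \FF_2)$ to be the subspace of $f \in \Hom^n_\AA(F_1, F_2)$ satisfying
\[
\phi_g(f) \circ \theta^1_g = \theta^2_g \circ f \quad \text{for all } g \in G.
\]
Since each $\theta^i_g$ is a closed isomorphism of degree zero, this condition is $\k$-linear and involves no signs, so the subspaces assemble into a $\Z$-graded $\k$-subspace of $\Hom_\AA(F_1, F_2)$.

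The only nontrivial verification is that the differential $d$ of $\Hom_\AA(F_1, F_2)$ preserves this subspace. I would check this by applying $d$ to both sides of the equivariance relation: using the graded Leibniz rule together with $d\theta^i_g = 0$, and using that each $\phi_g$ is a DG-functor (so $\phi_g \circ d = d \circ \phi_g$ and $\phi_g$ preserves degree), one obtains $\phi_g(df) \circ \theta^1_g = \theta^2_g \circ df$, so $df$ is again equivariant. The hypothesis that $\theta_g$ be closed of degree zero is used essentially at this step, and this is the only point where there is anything to check with care.

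Compatibility of the equivariance condition with composition is then immediate: for composable equivariant morphisms $f$ and $h$ one has $\phi_g(h \circ f) \circ \theta^1_g = \phi_g(h) \circ \phi_g(f) \circ \theta^1_g = \phi_g(h) \circ \theta^2_g \circ f = \theta^3_g \circ h \circ f$, and the identity morphism of any equivariant object is trivially equivariant. The graded Leibniz rule, associativity of composition, and $\k$-linearity are inherited from $\AA$ via passage to a graded subcomplex, so $\AA^G$ acquires the structure of a $\k$-linear DG-category. I do not anticipate any genuine obstacle beyond the closedness check above; the result is essentially a piece of bookkeeping that confirms the definition is well-posed.
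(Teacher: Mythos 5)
Your proposal is correct and follows exactly the paper's (very terse) argument: the paper simply observes that $\Hom_{\AA^G}(\FF_1,\FF_2)\subset\Hom_{\AA}(F_1,F_2)$ is a subcomplex, and your verification that the differential preserves the equivariance condition (using that the $\th_g$ are closed of degree zero and the $\phi_g$ are DG-functors) is precisely the content of that claim.
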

\begin{proof}
Indeed, it is clear that 
$$\Hom_{\AA^G}((F_1,(\th^1_g)),(F_2,(\th^2_g)))\subset \Hom_{\AA}(F_1,F_2)$$
is a subcomplex.
\end{proof}

For a pretriangulated DG-category $\AA$ with a $G$-action the category $\AA^G$ may not be pretriangulated, see an example below. But for a strongly pretriangulated category~$\AA$, the category $\AA^G$ is also strongly pretriangulated, see~\cite[Prop. 3.7]{So}. 

\begin{example}
\label{ex_notpretr}
We give an example of a pretrianguleted DG-category $\AA_0$ with a $G$-action such that the category $\AA_0^G$ is not pretriangulated.

Let $\CC$ be the category of $\Z/3\Z$-graded vector spaces.
Denote by $V_i, (i=0,1,2)$ the simple objects of $\CC$. 
Let $\AA=C^{\bul}_{DG}(\CC)$ be the DG-category of complexes over $\CC$.
Let $$M_i=V_0\oplus [V_i\xra{\Id} V_i]$$ be the complex located in degrees $-1$ and $0$.

Let $\AA'_0\subset \AA$ be the full subcategory whose objects are all objects of $\AA$ except for those quasi-isomorphic to $V_0$, let $\AA_0\subset \AA$ be the full subcategory such that $\Ob \AA_0=\Ob \AA_0'\cup \{M_1,M_2\}$. Since $M_1\cong V_0$ in $H^0(\AA)$, the category $\AA_0$ is pretriangulated (but not strongly pretriangulated).
Let the group $G=\Z/2\Z=\langle g\rangle$ act on $\AA$ by permuting $V_1$ and $V_2$ and sending $V_0$ to itself. Then the subcategory $\AA_0$ is invariant.

We claim that the category $\AA_0^G$ is not pretriangulated.
Indeed, there is an object $(V_0, (1)_g)[-1]$ in $\AA_0^G$. But $\AA_0^G$ contain no objects $(F,(\theta_g))$ quasi-isomorphic to $(V_0, (1)_g)$. Assume the contrary. Then $F$ is quasi-isomorphic to $V_0$. The definition of $\AA_0$ implies that $F$ is either $M_1$ or $M_2$. 
In both cases $F$ is not DG-isomorphic to $\phi_{g}(F)$, we get the contradiction.
Therefore $\AA_0^G$ is not homotopically closed under shifts and hence is not triangulated.
\end{example}

Suppose a triangulated category $\TTT$ has an enhancement: a pretriangulated DG-category~$\AA$ and an exact equivalence $H^0(\AA)\to \TTT$. Suppose the finite group $G$ acts on both
$\AA$ and $\TTT$ compatibly. Then P. Sosna in~\cite{So} defines $\TTT^G$ as $H^0(\Pretr(\AA^G))$. Below we demonstrate that this construction is, in general, dependent on the choice of enhancement.

\begin{example}
We give an example of two strongly pretriangulated categories $\AA_1$ and $\AA_2$ with actions of a finite $G$ and of $G$-equivariant quasi-equivalence $\AA_1\to \AA_2$ 
such that the induced functor 
$$ \Pretr(\AA_1^G)\to \Pretr(\AA_2^G)$$
is not a quasi-equivalence.

Let $\CC$, $\AA$, $V_i$ and $M_i$ be as in Example~\ref{ex_notpretr}. Denote by  $(\dim_0,\dim_1,\dim_2)\in \Z^3$
the dimension of objects in $\CC$.

Consider the subcategory $\AA_1\subset \AA$ generated by $M_1$ and $M_2$ by taking shifts and cones. Consider the subcategory $\AA_2\subset \AA$ generated by $M_1, M_2$ and $V_0$. 
Clearly, $\AA_i$ are strongly pretriangulated, the inclusion $\AA_1\to\AA_2$ induces an equivalence $H^0(\AA_1)\cong H^0(\AA_2)\cong \D^b(\mathrm{vect})$ with $V_0\cong M_1\cong M_2$ in $H^0(\AA_i)$ being the simple object. Hence, $\AA_1$ and $\AA_2$ are quasi-equivalent.

Let the group $G=\Z/2\Z$ act on $\AA$ by permuting $V_1$ and $V_2$ and sending $V_0$ to itself. Then the subcategories $\AA_1$ and $\AA_2$ are invariant. Since they are strongly pretriangulated, the categories $\AA_i^G$ are pretriangulated and $\Pretr(\AA_i^G)$ is DG-equivalent to $\AA_i^G$. Clearly,  $H^0(\AA_2^G)\cong \D^b(\mathrm{vect}^G)$, its simple objects are $(V_0,(1)_g)$
and $(V_0,(\mathrm{sign}(g))_g)$. We claim that the subcategory 
$H^0(\AA_1^G)\subset H^0(\AA_2^G)$ does not contain objects isomorphic to  $(V_0,(1)_g)$, and hence the inclusion $H^0(\AA_1^G)\to H^0(\AA_2^G)$ is not an equivalence.

Indeed, let $(N^{\bul},(\th_g))$ be an object of $\AA_1^G$. 
Note that 
$$\dim_1(N^i)+\dim_2(N^i)=\dim_0(N^i)+\dim_0(N^{i+1}).$$
(This is true for $N^{\bul}$ being any shift of $M_1$ and $M_2$ and therefore for any complex obtained from them by subsequent taking cones.)
Since $N^{\bul}$
is $G$-invariant, one has $\dim_1(N^i)=\dim_2(N^i)$.
We deduce that $$\dim_0(N^i)=\dim_0(N^{i+1})\pmod{2}.$$
Since the complex $N^{\bul}$ is finite, all $\dim_0(N^i)=0\pmod{2}$.
Therefore $$\sum_i(-1)^i\dim_0H^i(N^{\bul})=0\pmod{2}.$$ Hence $N^{\bul}$ is not homotopic to $V_0$.
\end{example}

This issue arises because the enhancement $\AA$ may be ``not enough symmetric'': objects $F$ and $\phi_g(F)$ that should be DG-isomorphic are only homotopic. Therefore
the category $H^0(\AA^G)$ lacks some desired objects. Fortunately, these missing objects can be recovered as certain direct summands of objects of $H^0(\AA^G)$. More precisely, if $\TTT$ is idempotent complete, then the idempotent completion of $H^0(\AA^G)$ is the good candidate for $\TTT^G$: it does not depend on the enhancement.

\begin{lemma}
\label{lemma_main}
Let $\AA$ be an additive DG-category, linear over a ring $\k$. Suppose a finite group $G$ acts on $\AA$ and $|G|$ is invertible in $\k$. Then one has a natural equivalence
$$H^0(\Perf(\AA^G))\to H^0(\Perf(\AA))^G.$$
\end{lemma}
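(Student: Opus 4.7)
The plan is to realize both sides of the desired equivalence as comodule categories over the same comonad on $H^0(\Perf(\AA))$, using the descent machinery of Section~\ref{section_group}.

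The starting point is the DG-biadjunction $p^*\colon \AA^G \rightleftarrows \AA \colon p_*$ of Lemma~\ref{lemma_biadjunction}. Left Kan extension (induction) along $p^*$ and $p_*$ gives DG-functors between the module categories; because $p^*$ is biadjoint to $p_*$, the induction functors $P^*\colon \Mod{-}\AA^G \to \Mod{-}\AA$ and $P_*\colon \Mod{-}\AA \to \Mod{-}\AA^G$ are also biadjoint. They send a representable module $h^X$ to $h^{p^*(X)}$ and $h^{p_*(X)}$ respectively, so they preserve free modules and hence restrict to biadjoint DG-functors between $\Perf(\AA^G)$ and $\Perf(\AA)$. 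Passing to homotopy categories yields biadjoint exact functors $\bar P^*, \bar P_*$ between $H^0(\Perf(\AA^G))$ and $H^0(\Perf(\AA))$.

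Independently, the DG $G$-action on $\AA$ extends to a DG $G$-action on $\Perf(\AA)$ via the autoequivalences $\phi_g$, and therefore to a $G$-action by exact autoequivalences on the idempotent complete triangulated category $H^0(\Perf(\AA))$. Applying Proposition~\ref{prop_four}(1) to $\CC = H^0(\Perf(\AA))$, I get an equivalence
$$H^0(\Perf(\AA))^G \xrightarrow{\sim} H^0(\Perf(\AA))_{\TT},$$
where $\TT$ is the comonad on $H^0(\Perf(\AA))$ associated to the $G$-action.

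The next step, which I expect to be the main obstacle, is to identify the comonad $\TT(\bar P^*,\bar P_*)$ on $H^0(\Perf(\AA))$ with $\TT$. On a representable $h^F$ both comonads produce $\bigoplus_{g\in G} h^{\phi_g(F)}$; the counits and comultiplications are identified on representables by direct inspection using the explicit descriptions from Lemma~\ref{lemma_biadjunction} and Definition~\ref{def_asscomonad}. The identification then propagates from free modules to all of $\Perf(\AA)$ by exactness and compatibility with direct summands. The delicate part is matching the coherence data $(\e,\delta)$ of the two comonads, not merely their underlying endofunctors.

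Finally, Proposition~\ref{prop_suffcond} applies to the adjunction $\bar P^* \dashv \bar P_*$: the category $H^0(\Perf(\AA^G))$ is idempotent complete (as the homotopy category of perfect modules always is), and the unit $\eta\colon \Id \to \bar P_* \bar P^*$ is a split monomorphism. Indeed, the identity $\e' \circ \eta = |G|\cdot \Id$ from Lemma~\ref{lemma_1G} holds already at the DG level on $\AA^G$, extends to $\Perf(\AA^G)$, and descends to $H^0$; since $|G|$ is invertible in $\k$, the morphism $\tfrac{1}{|G|}\e'$ is a functorial left inverse to $\eta$. The resulting comparison equivalence
$$H^0(\Perf(\AA^G)) \xrightarrow{\sim} H^0(\Perf(\AA))_{\TT(\bar P^*,\bar P_*)} = H^0(\Perf(\AA))_{\TT}$$
composes with the inverse of the equivalence from Proposition~\ref{prop_four}(1) to yield the desired natural equivalence $H^0(\Perf(\AA^G)) \to H^0(\Perf(\AA))^G$.
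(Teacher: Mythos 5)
Your proposal is correct and follows essentially the same route as the paper: both sides are identified with comodules over the comonad $\bigoplus_{g}\phi_g(-)$ on $H^0(\Perf(\AA))$, using Proposition~\ref{prop_suffcond} (with the splitting $\tfrac{1}{|G|}\e'$ of $\eta$ coming from Lemma~\ref{lemma_1G}) for the left-hand side and Proposition~\ref{prop_four} for the right-hand side. The only differences are expository: you spell out the extension of $p^*,p_*$ to perfect modules via induction along the Yoneda embedding and explicitly flag the identification of the two comonads as the delicate step, which the paper dispatches with a ``clearly''.
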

\begin{proof}
Consider the DG-category $\AA^G$ and adjoint functors $p^*\colon \AA^G\to \AA$ and $p_*\colon \AA\to \AA^G$  introduced in Section~\ref{section_group}. 
They are both left and right adjoint to each other and we have natural transformations of adjunction: $\eta\colon \Id_{\AA^G}\to p_*p^*$ and $\e'\colon p_*p^*\to \Id_{\AA^G}$, such that $\e'\eta=|G|$. 
These functors extend to adjoint functors  $\Perf(\AA^G)\to \Perf(\AA)$ and $\Perf(\AA)\to \Perf(\AA^G)$, which we also denote $p^*$ and $p_*$ respectively. They also possess   
natural transformations $\eta\colon \Id_{\Perf(\AA^G)}\to p_*p^*$ and $\e'\colon p_*p^*\to \Id_{\Perf(\AA^G)}$  satisfying the same identity. The same is true for $H^0$:
we have got adjoint functors 
$$H^0(p^*)\colon H^0(\Perf(\AA^G))\to H^0(\Perf(\AA))\quad\text{and}\quad
H^0(p_*)\colon H^0(\Perf(\AA))\to H^0(\Perf(\AA^G))$$ and functorial morphisms
$$\eta\colon \Id_{H^0(\Perf(\AA^G))}\to H^0(p_*)H^0(p^*)\qquad\text{and}\qquad\e'\colon H^0(p_*)H^0(p^*)\to \Id_{H^0(\Perf(\AA^G)}$$
such that $\e'\eta=|G|$.
This adjoint pair generates a comonad $\TT=\TT(H^0(p^*),H^0(p_*))$ on $H^0(\Perf(\AA))$.
By Proposition~\ref{prop_suffcond}, we have an equivalence
$$\Phi\colon H^0(\Perf(\AA^G))\to H^0(\Perf(\AA))_{\TT}.$$
Indeed, the natural morphism $\eta\colon \Id_{H^0(\Perf(\AA^G))}\to H^0(p_*)H^0(p^*)$
has a left inverse morphism $\frac{\e'}{|G|}$.

Each equivalence $\phi_g\colon \AA\to \AA$ extends to an equivalence $\AA{\mathrm{{-}Mod}}\to \AA{\mathrm{{-}Mod}}$ which restricts to an equivalence $\Perf(\AA)\to \Perf(\AA)$. This defines an action of $G$ on $\Perf(\AA)$.
Clearly, the comonad $\TT$ is isomorphic to the comonad associated with the induced $G$-action on $H^0(\Perf(\AA))$, see Definition~\ref{def_asscomonad}. Therefore by Proposition~\ref{prop_four} one has an equivalence				
$$H^0(\Perf(\AA))^G\to H^0(\Perf(\AA))_{\TT},$$
this concludes the proof.
\end{proof}

As a corollary we get 
\begin{theorem}
\label{th_main}
Let $\TTT$ be an idempotent complete triangulated category with an action of a finite group $G$. Suppose it has a $G$-equivariant enhancement: a pretriangulated DG-category $\AA$ with a $G$-action and a $G$-equivariant exact equivalence $\epsilon\colon H^0(\AA)\to\TTT$.
Then one has an exact equivalence 
$$H^0(\Perf(\AA^G))\to \TTT^G.$$

Consequently, the category $H^0(\Perf(\AA^G))$ depends only on $G$-category $\TTT$ and does not depend on the choice of $\AA$ and of $G$-action on $\AA$. 
\end{theorem}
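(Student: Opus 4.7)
The strategy is to reduce directly to Lemma~\ref{lemma_main} by replacing $\TTT$ with $H^0(\Perf(\AA))$, then checking that this replacement is compatible with the $G$-action and with the triangulated structure.

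First, I would observe that since $\TTT$ is idempotent complete and $\epsilon\colon H^0(\AA)\to\TTT$ is an equivalence, the triangulated category $H^0(\AA)$ is itself idempotent complete. By the description of $\Perf(\AA)$ recalled in Section~\ref{section_trcat}, the Yoneda embedding $\iota\colon H^0(\AA)\hookrightarrow H^0(\Perf(\AA))$ realizes the target as the idempotent closure of the source, so under our hypothesis $\iota$ is already an exact equivalence. Moreover, the $G$-action on $\AA$ extends to $\Perf(\AA)$ via the standard action on $\AA$-modules (precomposition of a module with $\phi_g^{\mathrm{op}}$), precisely as used in the proof of Lemma~\ref{lemma_main}. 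Since Yoneda embedding is functorial in DG-equivalences of $\AA$, it intertwines these two actions, so $\iota$ is a $G$-equivariant exact equivalence.

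Next, I would invoke the general principle that a $G$-equivariant exact equivalence of $G$-triangulated categories induces an exact equivalence on the categories of $G$-equivariant objects: a quasi-inverse acquires a canonical $G$-equivariant structure from the given one, and exactness is visible through the conservative, triangle-reflecting forgetful functors $p^*$. Applying this principle to the two $G$-equivariant exact equivalences
$$H^0(\AA) \xrightarrow{\,\iota\,} H^0(\Perf(\AA)),\qquad H^0(\AA) \xrightarrow{\,\epsilon\,} \TTT$$
yields exact equivalences $H^0(\Perf(\AA))^G \xleftarrow{\sim} H^0(\AA)^G \xrightarrow{\sim} \TTT^G$.

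Composing this chain with the equivalence furnished by Lemma~\ref{lemma_main} gives the desired exact equivalence
$$H^0(\Perf(\AA^G)) \xrightarrow{\sim} H^0(\Perf(\AA))^G \xrightarrow{\sim} H^0(\AA)^G \xrightarrow{\sim} \TTT^G.$$
The ``consequently'' statement is then automatic, because the right-hand side involves only the $G$-action on $\TTT$ and makes no reference to $\AA$. The place most requiring care is the verification that each functor in the chain is genuinely exact and $G$-equivariant; for the equivalence of Lemma~\ref{lemma_main} itself, exactness can be read off from its construction via the comparison functor of Proposition~\ref{prop_suffcond}, since the induced functors $H^0(p^*)$ and $H^0(p_*)$ between $H^0(\Perf(\AA))$ and $H^0(\Perf(\AA^G))$ are exact and the whole comparison diagram commutes with shifts and triangles.
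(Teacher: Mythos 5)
Your proposal follows essentially the same route as the paper's proof: the same chain of equivalences $H^0(\Perf(\AA^G))\to H^0(\Perf(\AA))^G\to H^0(\AA)^G\to \TTT^G$ assembled from Lemma~\ref{lemma_main}, the observation that idempotent completeness of $\TTT$ makes $H^0(\AA)\to H^0(\Perf(\AA))$ a $G$-equivariant equivalence, and the induced equivalences on equivariant categories. The only point you pass over lightly is where the paper spends most of its effort, namely the converse half of the exactness check --- that a triangle in $H^0(\Perf(\AA^G))$ whose image under $H^0(p^*)$ is distinguished is itself distinguished, which the paper deduces from the split embedding $\Id\to H^0(p_*)H^0(p^*)$ together with Neeman's result that a direct summand of a distinguished triangle is distinguished; your forward-direction argument combined with the fact that both sides are genuine triangulated categories (Corollary~\ref{corollary_TTTGtriang}) does yield this formally, so there is no real gap.
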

\begin{proof}
Since $H^0(\AA)\cong\TTT$, the category $H^0(\AA)$ is idempotent complete, so the natural 
embedding $i\colon H^0(\AA)\to H^0(\Perf(\AA))$ is an equivariant equivalence. Using Lemma~\ref{lemma_main}, we get a sequence of equivalences
$$H^0(\Perf(\AA^G))\xra{\Psi} H^0(\Perf(\AA))^G\xra{i^{-1}} H^0(\AA)^G\xra{\epsilon^G} \TTT^G.$$

It remains to check that a triangle in $H^0(\Perf(\AA^G))$ is distinguished iff its image under the above equivalence in $\TTT^G$ is distinguished.

Consider the commutative diagram of functors
$$\xymatrix{
&& H^0(\Perf(\AA))\ar[rr]^{i^{-1}} && H^0(\AA)\ar[rr]^{\epsilon} &&\TTT\\
H^0(\Perf(\AA^G))\ar[rr]^{\Psi}\ar[rru]^{H^0(p^*)}&& H^0(\Perf(\AA))^G\ar[rr]^{i^{-1}} \ar[u]^{p^*}&& H^0(\AA)^G\ar[rr]^{\epsilon^G} \ar[u]^{p^*}&&\TTT^G\ar[u]^{p^*},}$$
where $p^*$ denotes certain forgetful functors.
Let $\Delta$ be a triangle in $H^0(\Perf(\AA^G))$. By the definition of a distinguished triangle in $\TTT^G$, the triangle $\epsilon^Gi^{-1}\Psi(\Delta)$ is distinguished $\Longleftrightarrow$
$p^*\epsilon^Gi^{-1}\Psi(\Delta)$ is distinguished $\Longleftrightarrow$
$\epsilon^Gi^{-1}p^*\Psi(\Delta)$ is distinguished $\Longleftrightarrow$
$p^*\Psi(\Delta)$ is distinguished (because $\epsilon^G$ and $i$ are exact equivalences) 
$\Longleftrightarrow$ $H^0(p^*)(\Delta)$ is distinguished. So we need to demonstrate that 
$\Delta$ is distinguished $\Longleftrightarrow$ $H^0(p^*)(\Delta)$ is distinguished. 
Implication $\Longrightarrow$ is clear. To check the opposite, suppose that $H^0(p^*)(\Delta)$ is distinguished, then $H^0(p_*)H^0(p^*)(\Delta)$ is also distinguished. By the proof of Lemma~\ref{lemma_main}, the morphism $\Id\to H^0(p_*)H^0(p^*)$ is a split embedding of functors. Hence $\Delta$  is distinguished as a direct summand of a distinguished triangle $H^0(p_*)H^0(p^*)(\Delta)$ (see~\cite[Proposition 1.2.3]{Ne}).
\end{proof}

A case when $\TTT$ is not idempotent complete can be reduced to the one considered above.
The idea is straightforward: we extend $\TTT$ to its idempotent completion $\bar\TTT$. On the level of DG-enhancements, this is done by passing from a DG-category $\AA$ to the category of perfect complexes $\Perf(\AA)$. Then we restrict equivalence $H^0(\Perf(\AA^G))\to H^0(\Perf(\AA))^G\to \bar\TTT^G$ to certain smaller subcategories.
Below we do it in some details.

\medskip
Suppose $\AA$ is a pretriangulated DG-category with a $G$-action.
Let $\Psi\colon H^0(\Perf(\AA^G))\to H^0(\Perf(\AA))^G$ be the equivalence from  Lemma~\ref{lemma_main} and $p^*\colon \Perf(\AA^G)\to \Perf(\AA)$ be the forgetful functor. 
\begin{definition}
Denote by $Q_G(\AA)$ the full subcategory of $\Perf(\AA^G)$, whose objects are such $M$ that  $\Psi(M)$ is isomorphic in $H^0(\Perf(\AA))^G$ to an object of $H^0(\AA)^G$. Or, equivalently, such $M$ that $p^*M\in \Perf(\AA)$ is quasi-isomorphic to an object of $\AA$.
\end{definition}

\begin{theorem}
\label{th_maingeneral}
Suppose $\AA$ is a pretriangulated DG-category with a $G$-action. Then

\begin{enumerate}
\item $Q_G(\AA)$ is a strongly pretriangulated DG-category.
\item There exists an equivalence $\Gamma \colon H^0(Q_G(\AA))\to H^0(\AA)^G$.
\item Suppose $\AA_1, \AA_2$ are two pretriangulated DG-categories equipped with a $G$-action. Then for any $G$-equivariant DG-functor $\phi\colon \AA_1\to\AA_2$ 
one has a DG-functor $Q_G(\phi)\colon Q_G(\AA_1)\to Q_G(\AA_2)$ such that the diagram
$$\xymatrix{
H^0(Q_G(\AA_1))\ar[rr]^{\Gamma_1}\ar[d]^{H^0(Q_G(\phi))} && H^0(\AA_1)^G \ar[d]^{H^0(\phi)^G}  \\
H^0(Q_G(\AA_2))\ar[rr]^{\Gamma_2}                       && H^0(\AA_2)^G  
}$$
commutes (up to an isomorphism). Moreover, if $\phi$ is a quasi-equivalence then $Q_G(\phi)$ is also a quasi-equivalence.
\item Functors $Q_G(\phi)Q_G(\psi)$ and $Q_G(\phi\psi)$ are isomorphic for composable $\phi$ and $\psi$.
\end{enumerate}
\end{theorem}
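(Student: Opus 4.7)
The plan is to proceed in the same order as the statement (1)--(4), exploiting the equivalence $\Psi\colon H^0(\Perf(\AA^G))\to H^0(\Perf(\AA))^G$ provided by Lemma~\ref{lemma_main} together with the fact that $Q_G(\AA)\subset \Perf(\AA^G)$ is defined by a condition that can be detected after applying the forgetful functor $p^*$.

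For (1), since $\Perf(\AA^G)$ is strongly pretriangulated by construction, it suffices to show that $Q_G(\AA)$ is closed in it under shifts and under cones of closed degree-zero morphisms. An object $M$ lies in $Q_G(\AA)$ iff $p^*M\in \Perf(\AA)$ is quasi-isomorphic to an object of $\AA$. Since $\AA$ is pretriangulated, the essential image of $H^0(\AA)\hookrightarrow H^0(\Perf(\AA))$ is a triangulated subcategory; because $p^*$ is a DG-functor which commutes with shifts and cones, this condition is stable under shifts and cones in $\Perf(\AA^G)$, giving (1). For (2), I define $\Gamma$ as the restriction of $\Psi$ to $H^0(Q_G(\AA))$. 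By the defining condition of $Q_G(\AA)$, $\Gamma$ factors through the (manifestly fully faithful) inclusion $H^0(\AA)^G\hookrightarrow H^0(\Perf(\AA))^G$, giving the functor $\Gamma\colon H^0(Q_G(\AA))\to H^0(\AA)^G$. Full faithfulness of $\Gamma$ is inherited from $\Psi$; essential surjectivity is immediate because $\Psi^{-1}$ applied to an object of $H^0(\AA)^G$ lies by construction in $H^0(Q_G(\AA))$.

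For (3) and (4), a $G$-equivariant DG-functor $\phi\colon \AA_1\to\AA_2$ induces $\phi^G\colon \AA_1^G\to\AA_2^G$ and then $\Perf(\phi^G)\colon \Perf(\AA_1^G)\to\Perf(\AA_2^G)$. I define $Q_G(\phi)$ as the restriction of $\Perf(\phi^G)$ to $Q_G(\AA_1)$. To see it lands in $Q_G(\AA_2)$, note the natural isomorphism $p^*\circ\Perf(\phi^G)\cong \Perf(\phi)\circ p^*$, and that $\Perf(\phi)$ sends objects quasi-isomorphic to $\AA_1$-objects to objects quasi-isomorphic to $\AA_2$-objects. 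Commutativity of the square in (3) and the identity of (4) follow from naturality in $\AA$ of the whole construction $\Psi$, which in turn comes from the naturality of the adjoint pair $(p^*,p_*)$ and of the comparison functor used in the proof of Lemma~\ref{lemma_main}.

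The main obstacle is the final assertion in (3), that $Q_G(\phi)$ is a quasi-equivalence when $\phi$ is. Essential surjectivity of $H^0(Q_G(\phi))$ follows from the commutative diagram in (3): the right-hand vertical $H^0(\phi)^G$ is an equivalence because a $G$-equivariant equivalence $H^0(\AA_1)\to H^0(\AA_2)$ induces an equivalence on categories of equivariant objects, and the horizontal arrows are the equivalences $\Gamma_1,\Gamma_2$ of (2). For quasi-fully-faithfulness, one must check that $\phi^G$ induces quasi-isomorphisms on Hom-complexes; unwinding Definition~\ref{def_C^G} at the DG level identifies
\[
\Hom_{\AA_i^G}\bigl((F_1,\theta^1),(F_2,\theta^2)\bigr)
\]
with the subcomplex of invariants in $\Hom_{\AA_i}(F_1,F_2)$ under the $G$-action $g\cdot f=(\theta^2_g)^{-1}\phi_g(f)\theta^1_g$. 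Since $|G|$ is invertible in $\k$, averaging shows this subcomplex is a direct summand, so the quasi-isomorphism of ambient complexes provided by $\phi$ restricts to a quasi-isomorphism on invariants. Extending from $\AA_i^G$ to $\Perf(\AA_i^G)$ preserves quasi-fully-faithfulness (a standard property of the perfect hull), and restriction to the full subcategories $Q_G(\AA_i)$ does as well, completing the argument.
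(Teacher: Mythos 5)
Your proposal is correct, and for parts (1), (2), (4) and the construction and commutativity claims in (3) it follows essentially the same route as the paper: closure of $Q_G(\AA)$ under shifts and cones is detected through $p^*$ using that $\AA$ is pretriangulated; $\Gamma$ is the restriction of $\Psi$ (the paper makes explicit the intermediate passage through the isomorphism-closure $\overline{H^0(\AA)^G}$ of $H^0(\AA)^G$ inside $H^0(\Perf(\AA))^G$, which your phrase ``factors through the inclusion'' elides but does not misuse); and $Q_G(\phi)$ is the restriction of $\Perf(\phi^G)$, with the square commuting by naturality. The one point where you genuinely diverge is the final assertion of (3). The paper argues purely at the $H^0$ level: $H^0(\phi)^G$ is an equivalence, the $\Gamma_i$ are equivalences, hence $H^0(Q_G(\phi))$ is an equivalence; the upgrade to a quasi-equivalence is left implicit (it holds because the $Q_G(\AA_i)$ are strongly pretriangulated by part (1), so $H^n\Hom(M,N)\cong\Hom_{H^0}(M,N[n])$ and a shift-compatible functor inducing an equivalence on $H^0$ is automatically quasi-fully-faithful). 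You instead prove quasi-fully-faithfulness directly on Hom-complexes, identifying $\Hom_{\AA_i^G}$ with the $G$-invariants of $\Hom_{\AA_i}$, using invertibility of $|G|$ to split the invariants off as a direct summand so that a quasi-isomorphism of ambient complexes restricts to one on invariants, and then invoking that passage to $\Perf$ and restriction to full subcategories preserve quasi-fully-faithfulness. Your route is more explicit and self-contained at the cochain level and does not rely on the reader supplying the shift argument; the paper's is shorter but leans on part (1). Both are valid.
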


\begin{proof}

\begin{enumerate}
\item To show that $Q_G(\AA)$ is a strongly pretriangulated DG-category, it suffices to check that $Q_G(\AA)$ is closed under shifts and cones in $\Perf(\AA^G)$. This is clear: consider the case of cones. Suppose  $f\colon M\to N$ is a morphism in $\Perf(\AA^G)$ and $K=C(f)$ is its cone in $\Perf(\AA^G)$. Then $p^*(K)$ is a cone of the morphism $p^*(f)\colon p^*(M)\to p^*(N)$ in $\Perf(\AA)$. Since  $p^*(M),p^*(N)\in\Ob \Perf(\AA)$ are quasi-isomorphic to objects of $\AA$ and $\AA$ is pretriangulated, $p^*(K)$ is also quasi-isomorphic to an object of $\AA$. Therefore, $K$ lies in $Q_G(\AA)$.

\item Denote by $\overline{H^0(\AA)^G}$ the closure of $H^0(\AA)^G$ in $H^0(\Perf(\AA))^G$ under isomorphisms. Then one has a commutative 
diagram of functors where vertical arrows denote embeddings of fully faithful subcategories.
$$\xymatrix{
H^0(\Perf(\AA^G))\ar[rr]^{\Psi} &&H^0(\Perf(\AA))^G\\
H^0(Q_G(\AA))\ar@{-->}[rrd]^{\Gamma}\ar@{^{(}->}[u]\ar[rr]^{\Psi}&& \overline{H^0(\AA)^G}\ar@{^{(}->}[u]\\
 &&H^0(\AA)^G.\ar@{^{(}->}[u]^{\s} 
}$$

By the definition of $Q_G(\AA)$, the functor $\Psi\colon H^0(Q_G(\AA))\to \overline{H^0(\AA)^G}$ is a well-defined equivalence. By the definition of $\overline{H^0(\AA)^G}$, the embedding $\s$ is an equivalence. Define $\Gamma$ as a composition of $\Psi$ and an inverse functor to $\s$. Clearly, $\Gamma$ is an equivalence.

\item 
To prove this part, suppose $\phi\colon\AA_1\to\AA_2$ is a DG-functor compatible with $G$-actions. Consider the commutative diagram:
$$\xymatrix{
H^0(\Perf(\AA_1^G))\ar[rrr]^{\Psi_1}\ar[rrdd]^(.3){H^0(\phi^G)} &&&H^0(\Perf(\AA_1))^G\ar[rrdd]^{H^0(\phi)^G} &&\\
H^0(Q_G(\AA_1))\ar@{-}[r]\ar[u]^{} \ar@{-->}[rrdd]^(.4){H^0(Q_G(\phi))} & \ar[rr]^{\Gamma_1} &&H^0(\AA_1)^G\ar@{-}[rd]^{H^0(\phi)^G}\ar[u] &&\\
&&H^0(\Perf(\AA_2^G))\ar[rrr]^{\Psi_2} && \ar[rd] & H^0(\Perf(\AA_2))^G \\
&&H^0(Q_G(\AA_2))\ar[rrr]^{\Gamma_2}\ar[u]^{} &&&H^0(\AA_2)^G.\ar[u] 
}$$
Definition of $Q_G(\AA_i)$ and diagram chase show that the functor $H^0(\phi^G)$ sends objects of subcategory $H^0(Q_G(\AA_1))$ to the objects of $H^0(Q_G(\AA_2))$. Therefore $\phi^G\colon \Perf(\AA_1^G)\to \Perf(\AA_2^G)$ restricts to a functor $Q_G(\phi)\colon Q_G(\AA_1)\to Q_G(\AA_2)$ such that $H^0(Q_G(\phi))$ completes the diagram. 

Finally, if $\phi$ is a quasi-equivalence, then $H^0(\phi)^G\colon H^0(\AA_1)^G\to H^0(\AA_2)^G$ is an equivalence. Since $\Gamma_i$ are equivalences, $H^0(Q_G(\phi))$ is an equivalence.

\item Functors $Q_G(\phi)$, $Q_G(\psi)$ and $Q_G(\phi\psi)$ are restrictions of $\phi,\psi$ and $\phi\psi$ respectively. This implies the result immediately.
\end{enumerate}
\end{proof}

\begin{corollary}
\label{cor_main}
Let $\TTT$ be a triangulated category with an action of a finite group $G$. Suppose it has a $G$-equivariant enhancement: a pretriangulated DG-category $\AA$ with a $G$-action and a $G$-equivariant exact equivalence $\epsilon\colon H^0(\AA)\to\TTT$.
Then there exists an exact equivalence
$H^0(Q_G(\AA))\to \TTT^G$.
\end{corollary}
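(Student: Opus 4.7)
The plan is to assemble the equivalence as a composition and then verify exactness by reducing to the descent argument already used in the proof of Theorem~\ref{th_main}. First, part (2) of Theorem~\ref{th_maingeneral} supplies an equivalence $\Gamma\colon H^0(Q_G(\AA))\to H^0(\AA)^G$. Second, the given $G$-equivariant exact equivalence $\epsilon\colon H^0(\AA)\to\TTT$ induces in the obvious way an equivalence $\epsilon^G\colon H^0(\AA)^G\to\TTT^G$ on equivariant categories (essential surjectivity and fully faithfulness are inherited directly from $\epsilon$). The candidate equivalence is the composition $F=\epsilon^G\circ \Gamma$; Corollary~\ref{corollary_TTTGtriang} guarantees that $\TTT^G$ is a genuine triangulated category in the present setting, so the notion of ``exact equivalence'' is well defined.

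The remaining task is to verify exactness of $F$. Commutation with the shift functor is automatic from the componentwise formula of Definition~\ref{def_TTGtriang} and the stability of $\epsilon$. For distinguished triangles I would consider the commutative diagram of forgetful functors
\[
\xymatrix{
H^0(Q_G(\AA))\ar[rr]^{\Gamma}\ar[d]_{H^0(p^*)} && H^0(\AA)^G\ar[rr]^{\epsilon^G}\ar[d]_{p^*} && \TTT^G\ar[d]^{p^*}\\
H^0(\Perf(\AA)) && H^0(\AA)\ar[ll]\ar[rr]^{\epsilon}_{\sim} && \TTT,
}
\]
and invoke Definition~\ref{def_TTGtriang}: a triangle in $\TTT^G$ (respectively $H^0(\AA)^G$) is distinguished exactly when its image under the rightmost (respectively middle) $p^*$ is distinguished in the base. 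Combined with exactness of $\epsilon$ and full faithfulness of the inclusion $H^0(\AA)\hookrightarrow H^0(\Perf(\AA))$, the problem reduces to the following: for a triangle $\Delta$ in $H^0(Q_G(\AA))\subset H^0(\Perf(\AA^G))$, one has $\Delta$ distinguished iff $H^0(p^*)(\Delta)$ is distinguished in $H^0(\Perf(\AA))$.

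The main obstacle is the ``if'' direction of this last equivalence; the ``only if'' direction is immediate from exactness of $H^0(p^*)$. Here I expect the argument from the end of the proof of Theorem~\ref{th_main} to transfer verbatim: if $H^0(p^*)(\Delta)$ is distinguished, then $H^0(p_*)H^0(p^*)(\Delta)$ is distinguished since $H^0(p_*)$ is also exact; by Lemma~\ref{lemma_1G} combined with Proposition~\ref{prop_suffcond}, the unit of adjunction $\Id\to H^0(p_*)H^0(p^*)$ is split by $\tfrac{1}{|G|}\e'$; hence $\Delta$ is a direct summand of the distinguished triangle $H^0(p_*)H^0(p^*)(\Delta)$, and is therefore itself distinguished by \cite[Proposition~1.2.3]{Ne}. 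This is precisely the step where the invertibility of $|G|$ is genuinely used.
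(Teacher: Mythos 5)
Your proposal is correct and follows essentially the same route as the paper: compose $\epsilon^G\circ\Gamma$ and reduce exactness to the claim that a triangle $\Delta$ is distinguished iff $H^0(p^*)(\Delta)$ is, settled by the splitting $\tfrac{1}{|G|}\e'$ of the unit and the fact that direct summands of distinguished triangles are distinguished. The paper merely states this more tersely, observing that $\Gamma$ is a restriction of the equivalence $H^0(\Perf(\AA^G))\to H^0(\Perf(\AA))^G$ whose exactness was already established in the proof of Theorem~\ref{th_main} by exactly the argument you spell out.
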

\begin{proof}
By Theorem~\ref{th_maingeneral}, there is an eqiuvalence
$$H^0(Q_G(\AA))\xra{\Gamma} H^0(\AA)^G\xra{\epsilon^G} \TTT^G.$$
By the proof of Theorem~\ref{th_maingeneral}, $\Gamma$ is a restriction of the equivalence $H^0(\Perf(\AA^G))\to H^0(\Perf(\AA))^G$, which is exact by Theorem~\ref{th_main}. Therefore $\Gamma$ (and $\epsilon^G \Gamma$) is also exact.
\end{proof}

\begin{example}
\label{example_support}
Let $X$ be a quasi-projective variety over $\k$ with an action of a finite group~$G$.
Let $Z\subset X$ be its closed $G$-invariant subvariety. Suppose $\har \k$ does not divide~$|G|$. Denote by $\D^b_Z(\coh(X))$ the full subcategory in $\D^b(\coh(X))$ of objects supported in~$Z$. Clearly, $G$ acts on both $\D^b(\coh(X))$ and $\D^b_Z(\coh(X))$ by pull-back functors. We construct a DG-enhancement for the category $\D^b_Z(\coh(X))^G$.

Denote by $C_{DG}(\O_X\mmod)$ the DG-category of complexes of sheaves of $\O_X$-modules. Let $\mathcal I$ be the full subcategory in $C_{DG}(\O_X\mmod)$ whose objects are left bounded complexes of injective $\O_X$-modules with finite coherent cohomology. It is well-known (see, for example,~\cite[Paragraph 3, Ex.\,3]{BK} or~\cite[Section 5]{BLL}) that $\mathcal I$ is a DG-enhancement of $\D^b(\coh(X)$. Clearly, $G$ acts on $C_{DG}(\O_X\mmod)$ by pullbacks and $\mathcal I$ is an invariant subcategory.
Denote by $\mathcal I_Z$ the full subcategory in $\mathcal I$ of complexes whose cohomology sheaves are supported in~$Z$. Clearly, $\mathcal I_Z$ is a $G$-invariant pretriangulated DG-subcategory in $C_{DG}(\O_X\mmod)$ and $H^0(\mathcal I_Z)\cong \D^b_Z(\coh(X))$. Also, note that $\D^b_Z(\coh(X))$ is idempotent complete. Hence, Theorem~\ref{th_main} can be applied. We obtain that 
$\D^b_Z(\coh(X))^G$ has a DG-enhancement $\Perf(\mathcal I_Z^G)$.
\end{example}

\end{document}